\newtheorem{theorem}{Theorem}[section]
 \newtheorem{corollary}[theorem]{Corollary}
 \newtheorem{proposition}[theorem]{Proposition}
\theoremstyle{definition}
\newtheorem{definition}[theorem]{Definition}
\newtheorem{conjecture}[theorem]{Conjecture}
\theoremstyle{remark}
\newtheorem{remark}[theorem]{Remark}
\numberwithin{equation}{section}
\newcommand{\diver}{{{\mathrm{div}}}}
\newcommand{\grad}{{{\mathrm{grad}}}}
\newcommand{\pa}{\partial/\partial}
\begin{document}

\title{Dirichlet spectrum and Green function}


\author{G. Pacelli Bessa}
\address{Department of Mathematics, Universidade Federal do Cear\'{a},  
60440-900, Fortaleza, Brazil}
\email{bessa@mat.ufc.br}
\thanks{Research partially supported by the Brazilian's funding agencies FUNCAP-CE, PRONEX-CAPES
and CNPq-Brazil grants \# 301581/2013-4}

\author{Vicent Gimeno}
\address{Department of Mathematics, Universitat Jaume I-IMAC,   E-12071, 
Castell\'{o}n, Spain}
\email{gimenov@mat.uji.es}
\thanks{Research partially supported by  the Universitat Jaume I Research Program Project P1-1B2012-18, and DGI-MINECO grant (FEDER) MTM2013-48371-C2-2-PDGI from Ministerio de Ciencia e Inovaci\'{o}n (MCINN), Spain.}

\author{Luq\'{e}sio P. Jorge}
\address{Department of Mathematics, Universidade Federal do Cear\'{a},  
60440-900, Fortaleza, Brazil}
\email{ljorge@mat.ufc.br}
\thanks{Research partially supported by by the Brazilian's funding agencies FUNCAP-CE, PRONEX-CAPES
and CNPq-Brazil grants \# 305778/2012-9}

\subjclass[2010]{Primary 58C40, 35P15, (53A10)}

\keywords{Dirichlet spectrum, radial spectrum, momentum spectrum, Green functions }

\date{\today}

\dedicatory{}

\begin{abstract}
In the first part of this article we obtain an identity relating  the radial spectrum of rotationally invariant geodesic balls and an isoperimetric  quotient $\sum 1/\lambda_{i}^{\rm rad}=\int V(s)/S(s)ds$, Thms \ref{thmMain1-intro} \& \ref{thmMain2}. We also obtain  upper and lower estimates for the series $\sum \lambda_{i}^{-2}(\Omega)$ where $\Omega$ is an extrinsic ball of a proper minimal surface of $\mathbb{R}^{3}$, Thm \ref{thmMark}. In the second part we show that the first eigenvalue of  bounded domains is given by  iteration of the Green operator and taking the limit, $\lambda_{1}(\Omega)=\lim_{k\to \infty} \Vert G^k(f)\Vert_{2}/\Vert G^{k+1}(f)\Vert_{2}$ for any function $f>0$, Thm \ref{Main2}. In the third part we obtain explicitly the $L^{1}(\Omega, \mu)$-momentum spectrum of a bounded domain $\Omega$ in terms of its Green operator, Thm \ref{thm4.2}. In particular, we obtain the first eigenvalue of a weighted bounded domain in terms of the $L^{1}(\Omega, \mu)$-momentum spectrum,  extending the work of Hurtado-Markvorsen-Palmer on the first eigenvalue of rotationally invariant balls, \cite{HMP}. 
\end{abstract}

\maketitle
\section{Introduction}\label{sec:intro}

Let $\Omega \subseteq M$ be a subset of a Riemannian manifold  $(M, ds^2)$. The Laplace operator $\triangle =\diver \circ \grad\colon C^{\infty}_{0}(\Omega) \to   C^{\infty}_{0}(\Omega) $, acting on the space of smooth functions with compact support in $\Omega$,  is  symmetric, negative-definite and densely defined in $L^{2}(\Omega)$, however it is not self-adjoint.    Considering the  Sobolev spaces $W^{1}_{0}(\Omega)$ 
and $W_{0}^{2}(\Omega)$, the former being the closure of 
$C^{\infty}_{0}(\Omega)$ with respect to the norm 
\[\Vert u \Vert^{2}_{W^{1}(\Omega)}\colon = \int_{\Omega}u^{2}d\nu + 
\int_{\Omega}\vert \grad u \vert^{2}d\nu\]
while the latter consists of 
those functions $u\in W^{1}_{0}(\Omega)$ whose weak Laplacian 
$\triangle u $ exists and belongs to $L^{2}(\Omega)$, i.e. 
\[W_{0}^{2}(\Omega)=\{u\in W^{1}_{0}(\Omega)\colon \triangle u 
\in L^{2}(\Omega)\},\] we have that 
$\mathcal{L}=-\triangle \vert_{W_{0}^{2}(\Omega)}$ is  a non-negative self-adjoint extension of $-\triangle\vert_{C_{0}^{\infty}(\Omega)}$, see \cite{grigoryan-book}.  Let us recall that the spectrum of $\mathcal{L}$, denoted by  $\sigma (\Omega)$, is the set of all $\lambda\in  [0,\infty)$ for which $\mathcal{L}-\lambda I$ is not injective or the inverse operator $(\mathcal{L}-\lambda I)^{-1}$ is unbounded, see \cite{davies}.   The  set of all   $\lambda$ for which  $(\mathcal{L}-\lambda I)$ is not injective, (eigenvalues) is  called {\em point spectrum} $\sigma_{p}(\Omega)$. Each eigenvalue $\lambda \in \sigma_{p}(\Omega)$ is associated to a vector space $V_{\lambda}=\{u\in L^{2}(\Omega)\colon \triangle u + \lambda u =0\}$. The   set  of all isolated eigenvalues  of finite multiplicity, i.e., those $\lambda\in \sigma_{p}(\Omega)$ for which there exists $\epsilon >0$ such that $(\lambda - \epsilon, \lambda +\epsilon)\cap \sigma(\Omega)=\{\lambda\}$ and ${\rm dim}(V_{\lambda})<\infty$ is called the {\em discrete spectrum} and it is denoted by $\sigma_{d}(\Omega)$.  The complement of the discrete spectrum is the {\em essential spectrum},   $\sigma_{\mathrm{ess}}(\Omega)=\sigma (M)\setminus \sigma_{d}(\Omega)$.
When  $\Omega$ is  bounded with smooth boundary  $\partial \Omega$, (possibly empty), then  
 the spectrum  of $\mathcal{L}$ is  discrete, i.e. a sequence  of non-negative real numbers
\[0\leq\overline{\lambda}_{1}(\Omega)<\overline{\lambda_2}(\Omega) < \cdots \nearrow\infty,\] where each eigenvalue $\overline{\lambda}_{k}(\Omega)$ is associated to a finite dimensional vector subspace   $V_k=\{\phi\colon \triangle\phi + \overline{\lambda}_{k}\phi=0\}$ such that $L^{2}(\Omega)=\oplus_{k=1}^{\infty}V_{k}$, where $\phi\in  C^{\infty}(\Omega)\cap C^{0}(\overline{\Omega})$ and $\phi\vert \partial \Omega =0$. The eigenvalues $\overline{\lambda}_{k}$ are  sometimes called \textquotedblleft eigenvalues of $\Omega$\textquotedblright and $\phi\in V_{k}$ are called  eigenfunctions associated to $\overline{\lambda_{k}}$,  see \cite{chavel}, \cite{davies} and \cite[Thm.10.3]{grigoryan-book}.


A classical and  important problem in Riemannian geometry  is the study of the  relations between the spectrum $\sigma(\Omega)$ and the  geometry of $\Omega$, see  \cite{berger}, \cite{BGM}. In full  generality, this is a difficult problem. A reasonable problem is, the study of the spectrum of    {\em rotationally invariant}  geodesic balls, i.e.  balls with metrics invariant  by rotations around the center. The spectrum $\sigma (B(o,r))$ of a  {\em rotationally invariant}  geodesic ball $B(o,r)$ can be decomposed as a union of spectra $\sigma^{l}(B(o,r))$ of a  family of operators $L_{l}$ acting on smooth functions  on $[0,r]$ indexed by the eigenvalues of the sphere $\nu_{l}=l(l+n-2),\,l=0,1,\ldots$, this is,   $\sigma (B(o,r))=\cup_{l=0}^{\infty}\sigma^{l}(B(o,r))$, see  \cite[p. 41]{chavel}, \cite[Chapters 7 \& 8]{C-L}. Our first results concern each spectrum $\sigma^{l}(B(o,r))$ separately. More precisely, Theorem \ref{thmMain1-intro} \& \ref{thmMain2}, states  that  \[\sum_{\lambda\in \sigma^{0}(B(o,r)) } \lambda^{-1}=\int_{0}^{r} V(s)/S(s)ds,\]where $V(s)={\rm vol} (B(o,s))$ and $S(s)={\rm vol}(\partial B(o,s))$. And if $B(o,r)\subset \mathbb{R}^{n}$ then 
 \[\sum_{\lambda\in \sigma^{l}(B(o,r)) } \lambda^{-1}=c(l, n)\cdot\int_{0}^{r} V(s)/S(s)ds,\] $0<c(n,l)<1$, for  $l=1,2\ldots$
 We also observe that a lemma due to Cheng-Li-Yau \cite[Lemma 7]{CLY} implies that if ${\rm dim}(V_{k})\geq 2$ then  the origin of $B(o,r)$, belongs to the nodal set of every $\phi \in V_{k}$, see  Theorem \ref{thmNodal}. To close this   part of the article regarding the spectrum of rotationally invariant balls, i.e., Section \eqref{sec2}, we construct  examples of $4$-dimensional non-rotationally invariant geodesic balls $B(o,r)$ with the same spectrum  $\sigma^{0}(B(o,r))=\sigma^{0}(\tilde{B}(o,r))$ as the geodesic ball $\tilde{B}(o,r)$ of the hyperbolic space $\mathbb{H}^{4}(-1)$, see Example \ref{examp1}.

 In Section \ref{sec3}, we consider proper isometric minimal immersions $\varphi \colon M \to \mathbb{R}^{n}$, with  $\varphi(p)=o$ and  extrinsic balls $\Omega_r$ of radius $r$, i.e. the  connected component   $\Omega_r\subset \varphi^{-1}(B(o,r))$ containing $p$.  The main result in this section,  Theorem \ref{thmMark}, is the following estimate \[ A(m)\cdot \frac{r^m}{{\rm vol}(\Omega_r)}r^4 \leq \sum_{\lambda\in \sigma (\Omega_r)} \lambda^{-2} \leq B(m)\cdot\left(\frac{{\rm vol}(\Omega_r)}{r^m}\right)^{4/m}r^4 , \]if $m={\rm dim}(M)=2,3$. 
 
 In Section \ref{sec4}, we consider bounded subsets $\Omega$ with smooth boundary of weighted manifolds $ (M,ds^2, \mu)$, $d\mu = \psi d\nu$, $\psi >0$,  weighted Laplace operator \[\triangle_{\mu}=\frac{1}{\psi} \diver (\psi\, \grad)\] and its  Green operator $G\colon L^{2}(\Omega, \mu)\to L^{2}(\Omega, \mu)$. A particular  case of the main result of this section is that if $f\in L^{2}(\Omega, \mu)$, $f>0$ then \[\lim_{k\to \infty}\frac{\Vert G^{k}(f)\Vert_{L^2}}{\Vert G^{k+1}(f)\Vert_{L^{2}}}=\lambda_{1}(\Omega) \,\,{\rm and}\,\,\lim_{k\to \infty}\frac{ G^{k}(f)}{\Vert G^{k}(f)\Vert_{L^{2}}}\in {\rm Ker}( \triangle_{\mu}+ \lambda_1), \]where $G^{k}=\stackrel{k-times}{\overbrace{G\circ \cdots G}}$. More details see Theorem \ref{Main2}. In Section \ref{sec5} we consider the following hierarchy Dirichlet problem on a bounded open subset with smooth boundary $\Omega\subset M$,
 \begin{equation*}\left\{ \begin{array}{rll}\phi_{0}&=&1\,\,{\rm in}\,\,\Omega\\
 \triangle \phi_k+k\phi_{k-1}&=&0 \,\,{\rm in}\,\,\Omega\\
 \phi_{k}&=&0 \,\,{\rm on}\,\,\partial\Omega.
 \end{array}\right.
 \end{equation*}The set $\{\mathcal{A}_{k}\}_{k=1}^{\infty}$, $\mathcal{A}_{k}=\int_{\Omega}\phi_kd\mu$ is called the $L^{1}(\Omega, \mu)$-momentum spectrum. The momentum spectrum is intertwined  with the spectrum of $\Omega$. In the  main result of this section, Theorem \ref{thm4.2}, we solve the hierarchy Dirichlet problem and  explicitly give the momentum spectrum the in terms of the Green operator. In Section \ref{sec6}, the main result, see Theorem \ref{ThmMainA}, gives necessary and sufficient conditions for the expansion of the Green function in $L^{2}$-sense, \[g(x,y)=\sum_{i=1}^{\infty} \frac{u_i(x)\cdot u_i(y)}{\lambda_{i}(\Omega)}, \]where $\{u_i\}_{i=1}^{\infty}$ is a orthonormal basis of $L^{2}(\Omega, \mu)$ formed by eigenfunctions. This result is the main tool to prove Theorems \ref{thmMain1-intro}, \ref{thmMain2}, \ref{thmMark}. In Section \ref{sec7} we present  the proofs of all results.
 Notation: in this article, the spectrum of $\Omega$ will be written either as a sequence of  eigenvalues with  repetition,    according to their multiplicities,    $\sigma (\Omega) =\{0\leq\lambda_{1}(\Omega) < \lambda_{2}(\Omega) \leq \cdots \}$ or  as  a sequence of eigenvalues   without repetition $\sigma (\Omega)=\{ 0\leq\overline{\lambda}_{1}(\Omega)<\overline{\lambda_2}(\Omega) < \cdots \}.$

\section{Spectrum of rotationally invariant balls}  \label{sec2}
A Riemannian $m$-manifold is a rotationally invariant $m$-manifold,  also  called  {\em model manifold}, with   radial sectional curvature $-G(r)$ along the geodesics issuing from the origin, where $G\colon \mathbb{R}\to \mathbb{R}$ is a smooth even function, is defined as the quotient space
\[
\mathbb{M}^m_{h}=[0,R_h)\times\mathbb{S}^{m-1}/\sim
\] with ($(t, \theta)\sim (s,\beta)\Leftrightarrow t=s=0$ and $\forall \theta, \beta\in \mathbb{S}^{m-1}$ or $ t=s$ and $\theta=\beta$, endowed with the metric $ds^2_{h}(t,\theta)=dt^2+h^2(t)d\theta^2$
where  $h\colon [0, \infty) \to \mathbb{R}$ is the unique solution of the Cauchy problem \begin{eqnarray}\label{ubs2}
\left\{\begin{array}{l}
h''-Gh=0,\\
h(0)=0, h'(0)=1,
\end{array}\right.
\end{eqnarray} and
$R_h$ is the largest positive real number such that $h\vert_{(0,R_{h})}>0$. If $R_h=\infty$, the manifold $\mathbb{M}^{m}_{h}$ is geodesically complete. The geodesic ball $B(o,r)$ centered at the origin $o=\{0\}\times \mathbb{S}^{m-1}/\sim$ with radius $r<R_h$, i.e.  the set  $[0,r)\times \mathbb{S}^{m-1}/\sim$, is rotationally invariant.  The volume $V(r)$ of the ball $B(o,r)$ and the volume $S(r)$ of the boundary $\partial B(o,r)$ are  given by \[ \begin{array}{lll}
V(r)=\omega_{m}\int_{0}^{r}h^{m-1}(s)ds &  {\rm and} &
S(r)= \omega_{m}h^{m-1}(r),
\end{array} \]respectively, where $\omega_m={\rm vol}(\mathbb{S}^{m-1})$.
The  Laplace operator on $B(o,r)$, expressed in polar coordinates, is given by \[\triangle= \frac{\partial^{2}}{\partial t^{2}}+ (n-1)\frac{h'}{h}\frac{\partial }{\partial t} + \frac{1}{h^{2}}\triangle_{\theta}.\]
To search for the Dirichlet eigenvalues $\lambda$ of $B(o,r)$ it is enough to seek smooth functions of the form  $u(t,\theta)=T(t)H(\theta)$ satisfying    $\triangle u + \lambda u=0$ in $B(o,r)$ and $u\vert \partial B(o,r)=0$, see \cite[p. 42]{chavel}. This  is equivalent to  the following eigenvalue problems
\begin{equation}\left\{\begin{array}{rll}
\triangle_{\theta} H+ \nu H =0 &{\rm in}&\mathbb{S}^{m-1}(1)\\
T''+(m-1) \displaystyle\frac{h'}{h}T' + (\lambda - \displaystyle\frac{\nu}{h^2})T=0&{\rm in}& [0,r] \end{array}\right.\end{equation} with initial conditions
 $T'(0)=0$ if $l=0$, $ T(t)\sim c\cdot t^l$  as $ t\to 0$  when $l=1,2\ldots$ and $T(r)=0$.
Here $T'=\partial T/\partial t$, $T''=\partial^{2} T/\partial t^{2}$.
\vspace{2mm}

 For each value $\nu_l=l(l+m-2)$, the set of all $\lambda$ such that the equation \begin{equation}T''+(n-1) \displaystyle\frac{h'}{h}T' + (\lambda - \displaystyle\frac{\nu_l}{h^2})T=0\label{eqseq0}\end{equation} has a non-trivial solution satisfying the initial conditions consist of an increasing  sequence of positive real numbers $\{ \lambda_{l,j}\}_{j=1}^{\infty}$,   \begin{equation} 0\leq \lambda_{l,1}< \lambda_{l,2}<\cdots \uparrow +\infty.\label{eqseq}\end{equation}  Moreover, each $\lambda_{l,i}$  determine a $1$-dimensional space of solutions, say, generated by $T_{l,i}$. The sequence \eqref{eqseq} is called the $\nu_{l}$-spectrum, without repetitions, of $B(o,r)$, denoted by $\sigma^{l}(B(o,r))$. 
\vspace{2mm}

 It is well known that the set of   eigenvalues of the sphere $\mathbb{S}^{m-1}$ are given by  $\nu_{l}=l(l+m-2)$, $l=0,1,2,\ldots$ and their multiplicity of each $\nu_{l}$ is   given by \[\delta(l,m)=\left( \begin{array}{c}m-1+l\\ l \end{array}\right)-\left( \begin{array}{c}m-2+l\\ l-1 \end{array}\right).\] Thus, there exists an orthonormal basis formed by eigenfunctions $H_{l,1}(\theta), \ldots, H_{l,\delta}(\theta)$ of the vector space $V_{\nu_l}=\{ \phi\colon \triangle_{\theta}\phi+\nu_{l}\phi=0\}$. This implies that the set of functions \[\{T_{l,i}(t)H_{l,1}(\theta),T_{l,i}(t)H_{l,2}(\theta), \ldots T_{l,i}(t)H_{l,\delta}(\theta)\}\] is an orthonormal basis of the vector space $\{\psi\colon \triangle \psi + \lambda_{l,i}\psi=0\}$.
Therefore, the multiplicity of each eigenvalue $\lambda_{l,i}$  of the sequence \ref{eqseq}, in the spectrum $\sigma (B(o,r)$ is $\delta(l,m)$. 
 Since all  of the  eigenvalues of $B(o,r)$ are obtained in this procedure above,  the spectrum of  $B(o,r)$, without repetitions, is the union of   the $\nu_{l}$-spectrum $\sigma^{l}(B(o,r))$, $l=0, 1, \ldots$ \[\sigma (B(o,r))=\cup_{l=0}^{\infty} \sigma^{l}(B(o,r))=\{ \lambda_{l,j}\}_{l=0,j=1}^{\infty,\,\, \infty},\]each $\lambda_{l,i}$ with multiplicity $\delta(l,m)$.

%
The details of this discussion can be found in  \cite[pp. 40-42]{chavel}. Observe that the eigenvalues associated to $\nu_0=0$ are those whose eigenfunctions are radial functions $u(t, \theta)=c\cdot T(t)$. We call them radial eigenvalues. More generally we have the following definition.
\begin{definition}Let $B(p,r)\subset M$ be  a geodesic ball  of radius $r>0$ and  center  $p$, (not necessarily rotational invariant). The  radial spectrum $\sigma^{{\rm rad}}(B(p,r))$
 of $B(p,r)$ is formed by those eigenvalues $\lambda_{k}$ of $\mathcal{L}=-\triangle\vert_{W_{0}^{2}(B(p,r))}$ whose associated eigenspace $V_k$   contains a  radial eigenfunction.
\end{definition}
The radial spectrum of a general  geodesic ball may be empty, however, if $B(p,r)$ is rotationally invariant, then its radial spectrum   is $\sigma^{\rm rad}(B(p,r))=\sigma^{l=0}(B(p,r))$. It should be remarked that there are non-rotationally invariant geodesic balls with non-empty radial spectrum, see Example \ref{examp1}.  A criteria due to S.Y. Cheng, P. Li and S.T. Yau    states that, for rotationally invariant  geodesic balls, in each eigenspace $V_{k}=\{\phi\colon \triangle \phi +\overline{\lambda}_{k} \phi=0\}$ either $\phi (p)=0$ for all $\phi \in V_{k}$ or $V_k$ has a radial eigenfunction and $\overline{\lambda}_{k}\in \sigma^{{\rm rad}}(\Omega) $, see    \cite[Lem. 7]{CLY}. 
As observed, the spectrum of  rotationally invariant geodesic balls of model $m$-manifolds can be decomposed into the union of subsets called $\nu_l$-spectrum associated to eigenvalues $\nu_l= l(l+m-2)$ of the sphere $\mathbb{S}^{m-1}$.  Moreover, the multiplicity of each eigenvalue of $\nu_l$-spectrum is  the multiplicity of the eigenvalue $\nu_l$ of the sphere. In particular, the radial eigenvalues, associated to the $\nu_{0}=0$, has multiplicity  one. If we take in consideration the Cheng-Li-Yau criteria   then we have the following theorem.
  \begin{theorem}Let $B(o,r)\subset \mathbb{M}_{h}^{m}$  be a geodesic ball of radius $r>0$ and  centred at the origin. Let  $V_k=\{\phi \colon \triangle \phi + \overline{\lambda}_{k}\phi =0\}$ be the eigenspace associated to the eigenvalue $\overline{\lambda}_{k}$. If ${\rm dim}V_{k}\geq 2$ then zero belongs to the nodal set of each $\phi \in V_{k}$. \label{thmNodal}
  \end{theorem}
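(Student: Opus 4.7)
My plan is to combine the Cheng--Li--Yau criterion (stated just before the theorem) with the spherical-harmonic decomposition of eigenfunctions on $B(o,r) \subset \mathbb{M}_h^m$, arguing by contradiction that the second alternative of the dichotomy is incompatible with $\dim V_k \geq 2$.

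I would start by expanding an arbitrary $\phi \in V_k$ by separation of variables,
\[
\phi(t,\theta) \;=\; \sum_{l \geq 0}\; \sum_{j=1}^{\delta(l,m)} c_{l,j}\, T_{l,i(l)}(t)\, H_{l,j}(\theta),
\]
where the sum runs over those $l$ for which $\overline{\lambda}_k \in \sigma^l(B(o,r))$ and $T_{l,i(l)}$ is the unique regular solution of the $l$-th radial ODE \eqref{eqseq0} at the eigenvalue $\overline{\lambda}_k$. The prescribed behavior $T_{l,i(l)}(t) \sim c\,t^l$ as $t \to 0$ forces $T_{l,i(l)}(0) = 0$ for every $l \geq 1$, so evaluation at the origin collapses to $\phi(o) = c_{0,1}\, T_{0,i(0)}(0)\, H_{0,1}$, a constant spherical harmonic. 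In particular, $\phi(o) \neq 0$ is possible only when $\overline{\lambda}_k \in \sigma^0(B(o,r))$.

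Assume toward a contradiction that $\dim V_k \geq 2$ and that some $\phi \in V_k$ has $\phi(o) \neq 0$. By the Cheng--Li--Yau criterion the second alternative of the dichotomy must hold: $V_k$ contains a radial eigenfunction $u$, and by the simplicity of $\sigma^0$ recorded just before the theorem, the radial part of $V_k$ is one-dimensional and spanned by $u$, with $u(o) \neq 0$. The heart of the argument is to invoke CLY's Lemma 7 of \cite{CLY} in its full strength to exclude the coincidence $\overline{\lambda}_k \in \sigma^0 \cap \sigma^l$ for some $l \geq 1$: such a coincidence would supply $V_k$ simultaneously with the nonvanishing radial eigenfunction $u$ and with a non-radial component in $V_k^l$, and combining these via suitable linear combinations together with the spherical averaging argument behind CLY would contradict the dichotomy applied to the resulting eigenfunctions. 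This forces $V_k = \mathrm{span}(u)$, whence $\dim V_k = 1$, the desired contradiction; therefore every $\phi \in V_k$ vanishes at $o$, so the origin lies in the nodal set of each such $\phi$.

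The main obstacle I anticipate is precisely this last rigidity step, i.e.\ excluding the accidental spectral degeneracy $\overline{\lambda}_k \in \sigma^0 \cap \sigma^l$ for $l \geq 1$. If Lemma 7 of \cite{CLY} is read in its strong form (``$V_k$ contains a radial eigenfunction'' $\Rightarrow$ $V_k$ is generated by that radial eigenfunction), the contradiction follows in one line from the dichotomy together with $\nu_0$-simplicity. If instead only the bare dichotomy is available, a Sturm-comparison argument on the radial ODEs, exploiting the strictly positive potential term $\nu_l/h^2$ that distinguishes the $l \geq 1$ equations from $l = 0$ together with the regular-at-$0$ and Dirichlet-at-$r$ boundary conditions, would be required to establish $\sigma^0 \cap \sigma^l = \emptyset$ in a general model $\mathbb{M}_h^m$.
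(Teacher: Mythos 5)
Your plan follows the same route the paper takes: the paper offers no separate proof of Theorem \ref{thmNodal} in Section \ref{sec7}; its entire argument is the paragraph preceding the statement, namely the Cheng--Li--Yau dichotomy combined with the assertion that each radial eigenvalue is simple in the full spectrum of $B(o,r)$. Your separation-of-variables reduction (only the $l=0$ mode can be nonzero at $o$, since $T_l(t)\sim c\,t^l$ for $l\geq 1$) and the contradiction with $\dim V_k\geq 2$ reproduce exactly that argument.

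You have correctly isolated the crux: the theorem is in fact \emph{equivalent} to the disjointness $\sigma^0(B(o,r))\cap\sigma^l(B(o,r))=\emptyset$ for all $l\geq 1$, since if $\overline{\lambda}_k$ lay in both then $V_k$ would contain the nonvanishing radial mode together with the $\delta(l,m)$-dimensional $l$-block, giving $\dim V_k\geq 2$ with $\phi(o)\neq 0$ for some $\phi$. But neither of your two proposed ways of closing this step works as described. Lemma 7 of \cite{CLY} is proved by spherical averaging and yields only the dichotomy (an eigenfunction not vanishing at $o$ forces a nonzero radial eigenfunction in $V_k$); it does not assert that $V_k$ is then spanned by that radial eigenfunction. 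And Sturm comparison on the radial ODEs, using the extra positive potential $\nu_l/h^2$, gives only the ordering $\lambda_{l,i}>\lambda_{0,i}$, which is perfectly compatible with coincidences such as $\lambda_{l,1}=\lambda_{0,2}$. Even in the model case $B(o,r)\subset\mathbb{R}^2$ the required disjointness is the statement that $J_0$ and $J_l$ share no positive zeros (Bourget's hypothesis, proved by Siegel), which lies far beyond Sturm theory. The paper itself supplies no argument here either: it simply records, as part of the decomposition $\sigma(B(o,r))=\cup_{l}\sigma^{l}(B(o,r))$ with multiplicities $\delta(l,m)$, that radial eigenvalues have multiplicity one. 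So your proposal matches the paper's reasoning step for step, but the point you flag as the ``main obstacle'' remains genuinely unproved in your write-up, and your fallback would not close it.
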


\subsection{Stochastically incomplete  model manifolds}
 \!\! Let $M$ be a  Riemannian manifold and  $p_t(x,y)\in 
C^{\infty}\left((0, \infty)\times M \times M\right)$  be the heat kernel of 
$M$. It is well known that \[\int_{M}p_t(x,y)d\nu (y)\leq 1.\] This property 
allows to see the heat kernel as a probability distribution in the space of 
random paths on $M$. More precisely, for $x\in  M$ and $U\subset M$, open 
subset, $\int_{U}p_t(x,y)d\nu (y)$ is the probability that a random path 
emanating from $x$ lies
in $U$ at time $t$. Thus if we have strict inequality, 
$\int_{M}p_t(x,y)d\nu(y)<1$, then there is a positive
probability that a random path will reach infinity in finite time $t$. This 
motivates the following definition.

\begin{definition}A Riemannian manifold $M$ is stochastically complete if for 
every $x\in M$ and $t>0$  one has that 
\begin{equation}\label{heatkernel}\int_{M}p_{t}(x,y)d\nu(y)=1,
\end{equation} where $p_t(x,y)\in C^{\infty}((0, \infty)\times M \times M)$ is 
the heat kernel of $M$. Otherwise it is called stochastically incomplete.
\end{definition}  The
 spectrum of a stochastically incomplete model manifolds $\mathbb{M}_{h}^{m}$ is  discrete, see \cite[Example 6.12]{bessa-pigola-setti}, say \[\sigma(\mathbb{M}^{m}_{h})=\{0<\lambda_{1}(\mathbb{M}_{h}^{m})< \lambda_{2}(\mathbb{M}_{h}^{m})\leq \cdots\}\] and let \[\sigma(B(o,r))=\{0<\lambda_{1}(B(o,r))< \lambda_{2}(B(o,r))\leq \cdots\}\] be the spectrum  (with repetition) of  $B(o,r)\subset \mathbb{M}_{h}^{m}$. It is well known that
 the ${\rm k}^{th}$-eigenvalue $\lambda_k(\mathbb{M}^{m}_{h})$  is  obtained as the limit
\begin{equation}\lambda_k(\mathbb{M}^{m}_{h})=\lim_{r\to \infty}\lambda_k(B(o,r))\label{eqmari}
\end{equation} for $k=1,2,\ldots$.  Moreover,  \eqref{eqmari} holds  for any geodesically complete Riemannian with discrete spectrum and  more general operators. In particular,  the operator  $L_0(T)=T''+(m-1)\displaystyle\frac{h'}{h}T'$ on $[0,r]$, see   \cite[eq. 2.85]{bmr}. Since 
 the radial spectrum  $\sigma^{\rm rad} (B(o,r))=\{ 0< \lambda_{1}^{\rm rad} (B(o,r))\leq  \lambda_{2}^{\rm rad} (B(o,r))\leq  \cdots\}$  is the spectrum of the operator $L_0(T)=T''+(m-1)\displaystyle\frac{h'}{h}T'$ on $[0,r]$ then  the ${\rm i}^{th}$ radial eigenvalue of $\mathbb{M}^{m}_{h}$ is also obtained as a limit $\lambda_i^{\rm rad}(\mathbb{M}^{m}_{h})=\lim_{r\to \infty}\lambda_i^{\rm rad}(B(o,r))$ for $i=1,2,\ldots$.

There exists  
a simple and elegant geometric criteria for stochastic incompleteness of model manifolds $\mathbb{M}_{h}^{m}$,  proved by many authors in different settings. 
\begin{theorem}[\cite{ahlfors},  \cite{grigoryan89}, \cite{Gr}, \cite{ichihara1}] A geodesically complete model manifold $\mathbb{M}_{h}^{m}$ is stochastically incomplete if and only if \[ \int_{0}^{\infty} \frac{V(t)}{S(t)}dt < \infty.\]Where $V(r)=\omega_{m}\int_{0}^{r}h^{m-1}(s)ds$   and $
S(r)= \omega_{m}h^{m-1}(r)$.\label{thm1.4}
\end{theorem}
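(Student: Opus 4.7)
The plan is to exploit the Khas'minskii--Grigoryan characterization of stochastic completeness: a Riemannian manifold $M$ is stochastically complete if and only if for some (equivalently, every) $\alpha>0$ the only bounded $C^{2}$ solution of $\triangle u = \alpha u$ on $M$ is $u\equiv 0$. On a model manifold, rotational invariance reduces the question to a single radial ODE, and the integral condition falls out of a direct integration.

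First I would set up the radial problem. Take $\alpha=1$ and look for a radial solution $u=u(r)$ of $\triangle u=u$ with initial data $u(0)=1$, $u'(0)=0$. Using the polar form of the Laplacian quoted earlier, this amounts to the Sturm--Liouville equation
\begin{equation*}
u''+(m-1)\frac{h'}{h}\,u'=u, \qquad \text{i.e.}\qquad \bigl(h^{m-1}u'\bigr)'=h^{m-1}u.
\end{equation*}
A standard ODE argument (maximum principle / comparison) gives existence of a unique smooth solution on $[0,\infty)$ and shows that both $u\ge 1$ and $u'\ge 0$ there.

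Second, I would integrate the equation from $0$ to $r$ to get
\begin{equation*}
u'(r)=\frac{1}{h^{m-1}(r)}\int_{0}^{r}h^{m-1}(s)u(s)\,ds=\frac{\omega_{m}}{S(r)}\int_{0}^{r}h^{m-1}(s)u(s)\,ds,
\end{equation*}
so that, using the monotonicity $u\ge 1$ and $u(s)\le u(r)$,
\begin{equation*}
\frac{V(r)}{S(r)}\le u'(r)\le u(r)\,\frac{V(r)}{S(r)}.
\end{equation*}
If $\int_{0}^{\infty}V/S\,dt=\infty$ then the left inequality forces $u(r)\to\infty$, so no non-trivial bounded solution exists; if the integral converges, a Grönwall-type argument applied to the right inequality yields $u(r)\le \exp\bigl(\int_{0}^{r}V(t)/S(t)\,dt\bigr)$, giving a non-constant bounded positive solution.

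Third, I would close the loop via Khas'minskii's theorem: the existence of a non-constant bounded positive solution of $\triangle u=u$ is equivalent to stochastic incompleteness. For the "if" direction on a model this is immediate from the radial construction above; for the "only if" direction I would note that when $\mathbb{M}^{m}_{h}$ is stochastically incomplete, averaging any non-constant bounded solution over the $O(m)$-action at the origin produces a non-constant bounded radial solution, and the radial analysis then forces $\int V/S\,dt<\infty$. The main obstacle is invoking the Khas'minskii characterization cleanly (this is the deep analytic input); the rest of the argument is an essentially elementary ODE computation thanks to the model geometry and the fact that $h^{m-1}$ serves as an exact integrating factor.
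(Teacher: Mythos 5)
The paper does not prove this theorem: it is quoted as a classical result with citations to Ahlfors, Grigor'yan and Ichihara, so there is no internal proof to compare against. Your argument is correct and is essentially the standard proof found in those references (e.g.\ Grigor'yan's survey \cite{Gr}): reduce to the radial ODE $\bigl(h^{m-1}u'\bigr)'=h^{m-1}u$, use $h^{m-1}$ as an exact integrating factor to trap $u'$ between $V/S$ and $u\cdot V/S$, and invoke the Khas'minskii test. The only step to phrase carefully is the passage from unboundedness of the \emph{radial} solution to non-existence of \emph{any} non-trivial bounded non-negative solution; as stated in your second paragraph this is a non sequitur, but your rotational-averaging remark in the third paragraph repairs it (the $O(m)$-average of a bounded non-negative solution that is not identically zero is strictly positive by the strong maximum principle, hence is a non-trivial bounded radial solution, necessarily a multiple of $u$), or alternatively one compares an arbitrary bounded solution $v$ with $\bigl(\sup v/u(R)\bigr)u$ on $B(o,R)$ via the maximum principle and lets $R\to\infty$.
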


\subsection{\textquotedblleft Harmonic series\textquotedblright of radial eigenvalues }
Our  main result in this section regards  the radial spectrum of rotationally invariant balls of model manifolds. The  radial spectrum of a stochastically incomplete model manifolds $\mathbb{M}_{h}^{m}$ is deeply related with this geometric criteria as shows our first result, Theorem \ref{thmMain1-intro}. 
 \begin{theorem}\label{thmMain1-intro} Let 
 $B(o,r)$ be of the model $\mathbb{M}_{h}^{m}$, centred at the origin $o$  of radius $r$, with $\lambda_{1}(B(o,r))>0$. Let $\sigma^{\rm 
rad}(B(o,r))=\{\lambda_{1}^{\rm rad}(B(o,r))< \lambda_{2}^{\rm rad}(B(o,r)) 
<\cdots\}$ be the radial spectrum of $B(o,r)$. Then 
\begin{equation}\label{eqMain1-intro}\sum_{i=1}^{\infty}\frac{1}{\lambda_{i}^{\rm 
rad}(B(o,r))}=\int_{0}^{r}\frac{V(s)}{S(s)}ds\cdot
 \end{equation}If the model $\mathbb{M}_{h}^{m}$ is stochastically 
incomplete  then 
 \begin{equation}\label{eqMain2-intro}\sum_{i=1}^{\infty}\frac{1}{\lambda_{i}^{\rm 
rad}(\mathbb{M}_{h}^{n})}=\int_{0}^{\infty}\frac{V(s)}{S(s)}ds\cdot
 \end{equation}
\end{theorem}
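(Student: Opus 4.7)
The strategy is to reduce to a one-dimensional Sturm--Liouville problem and to compute the sum $\sum 1/\lambda_i^{\rm rad}$ as the trace of the corresponding Green operator. By the separation of variables recorded in \eqref{eqseq0} and the Cheng--Li--Yau criterion, the radial eigenvalues of $B(o,r)$ are exactly the eigenvalues of
\[
L_{0}T = T'' + (m-1)\frac{h'}{h}T' = \frac{1}{h^{m-1}}\bigl(h^{m-1}T'\bigr)',
\]
with $T'(0)=0$ and $T(r)=0$, viewed as a positive self-adjoint operator on the weighted space $L^{2}([0,r],\,h^{m-1}\,ds)$. Since $\lambda_{1}(B(o,r))>0$, its inverse $G$ is a compact, positive, self-adjoint operator whose eigenvalues are precisely the reciprocals $1/\lambda_{i}^{\rm rad}(B(o,r))$.

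First I would construct the integral kernel of $G$ explicitly. Integrating $(h^{m-1}u')' = -h^{m-1}f$ once on $[0,s]$ the boundary contribution at $0$ vanishes because $h(0)=0$; a second integration from $s$ to $r$ using $u(r)=0$ followed by a change in the order of integration gives
\[
u(s)=\int_{0}^{r}\mathcal{G}(s,t)\,f(t)\,h^{m-1}(t)\,dt,\qquad \mathcal{G}(s,t)=\int_{\max(s,t)}^{r}\frac{d\sigma}{h^{m-1}(\sigma)}.
\]
Thus $G$ is the integral operator on $L^{2}([0,r],\,h^{m-1}\,ds)$ with the symmetric, nonnegative kernel $\mathcal{G}$.

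The heart of the proof is then to invoke Theorem \ref{ThmMainA}, which provides the $L^{2}$-expansion of $\mathcal{G}$ in the radial eigenbasis and, upon integration on the diagonal against the weight $h^{m-1}(s)\,ds$, the trace identity
\[
\sum_{i=1}^{\infty}\frac{1}{\lambda_{i}^{\rm rad}(B(o,r))}=\int_{0}^{r}\mathcal{G}(s,s)\,h^{m-1}(s)\,ds.
\]
An elementary Fubini step converts the right-hand side into
\[
\int_{0}^{r}\frac{1}{h^{m-1}(\sigma)}\int_{0}^{\sigma}h^{m-1}(s)\,ds\,d\sigma=\int_{0}^{r}\frac{V(s)}{S(s)}\,ds,
\]
using $V(s)=\omega_{m}\int_{0}^{s}h^{m-1}(\tau)\,d\tau$ and $S(s)=\omega_{m}h^{m-1}(s)$. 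This establishes \eqref{eqMain1-intro}.

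For the stochastically incomplete case \eqref{eqMain2-intro} I would pass to the limit $r\to\infty$: by domain monotonicity and \eqref{eqmari}, each $1/\lambda_{i}^{\rm rad}(B(o,r))$ increases monotonically to $1/\lambda_{i}^{\rm rad}(\mathbb{M}_{h}^{m})$, while the right-hand side of \eqref{eqMain1-intro} increases monotonically to $\int_{0}^{\infty}V/S$, whose finiteness is guaranteed by Theorem \ref{thm1.4}. Monotone convergence on both sides then delivers \eqref{eqMain2-intro}. The main technical obstacle I anticipate lies in applying Theorem \ref{ThmMainA} to the singular weighted one-dimensional operator $L_0$ and in controlling the behaviour of $\mathcal{G}$ near the corner $(0,0)$, where $\int_{0}^{r}d\sigma/h^{m-1}$ may diverge; once this is handled, the remainder is Fubini and monotone convergence.
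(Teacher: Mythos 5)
Your plan follows the paper's proof essentially step for step: reduce to the one-dimensional weighted Sturm--Liouville problem for $L_0$ on $L^2([0,r],h^{m-1}ds)$, derive exactly the Green kernel $\mathcal{G}(s,t)=\int_{\max(s,t)}^{r}d\sigma/h^{m-1}(\sigma)$, invoke the trace identity of Theorem~\ref{ThmMainA}, reorganise the diagonal integral by Fubini into $\int_0^r V/S$, and pass to the limit $r\to\infty$ via \eqref{eqmari} and Theorem~\ref{thm1.4} for the stochastically incomplete case. The one step you flag without resolving --- justifying that Theorem~\ref{ThmMainA} applies to the singular weighted operator $L_0$ with mixed Neumann/Dirichlet data --- is exactly what the paper supplies by constructing a Dirichlet form $\mathcal{E}_{bf}$ on $L^2([0,r],\mu)$, verifying positivity, closedness and the Markov property, and showing its self-adjoint generator $\mathcal{L}$ extends $L_0$ and has $T$ (i.e.\ $\mathcal{G}$) as its Green operator; once this is in place the singularity of $\mathcal{G}$ at the corner is harmless because the weight $h^{m-1}$ makes the diagonal integral finite.
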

\begin{corollary}Let $B(o,1)\subset \mathbb{M}^{2}_{h}$ be a geodesic ball of radius $r=1$ in the model manifold $\mathbb{M}^{2}_{h}$, where   $h(t)= \sinh(t), t, \sin(t)$, respectively. Then,
\begin{eqnarray}\sum_{i=1}^{\infty}\frac{1}{\lambda_{i}^{\rm 
rad}(B(o,1))}&=& \log\left(\frac{\left(1+e\right)^2}{4e}\right)\approx 0.240229\,\,\,\,\text{if}\,\,h(t)=\sinh (t)\nonumber \\
\sum_{i=1}^{\infty}\frac{1}{\lambda_{i}^{\rm 
rad}(B(o,1))}&=&0.25\,\,\,\,\text{if}\,\,h(t)=t \nonumber \\
\sum_{i=1}^{\infty}\frac{1}{\lambda_{i}^{\rm 
rad}(B(o,1))}&=&\log\left(\sec(\frac{1}{2})\right)\approx 0.261168\,\,\,\,\text{if}\,\,h(t)=\sin(t). \nonumber
\end{eqnarray}
\end{corollary}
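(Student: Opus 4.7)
The plan is to apply Theorem \ref{thmMain1-intro} directly: for $r=1$ and $m=2$, the sum $\sum 1/\lambda_i^{\rm rad}(B(o,1))$ equals $\int_0^1 V(s)/S(s)\,ds$, and since $\omega_2 = 2\pi$, the isoperimetric quotient reduces to
\[
\frac{V(s)}{S(s)} \;=\; \frac{\int_0^s h(u)\,du}{h(s)}.
\]
The corollary is therefore a three-fold elementary integration.

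For $h(t)=\sinh t$ I would use $\int_0^s \sinh u\,du = \cosh s - 1$ together with the half-angle identity $(\cosh s - 1)/\sinh s = \tanh(s/2)$, so that $\int_0^1 \tanh(s/2)\,ds = 2\log\cosh(1/2)$. Then, observing that $4\cosh^2(1/2) = (e^{1/2}+e^{-1/2})^2 = (1+e)^2/e$, the answer collapses to $\log\bigl((1+e)^2/(4e)\bigr)$. For $h(t)=t$ the integrand is $s/2$, giving the trivial value $1/4$.

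For $h(t)=\sin t$ the analogous half-angle identity $(1-\cos s)/\sin s = \tan(s/2)$ yields $\int_0^1 \tan(s/2)\,ds = -2\log\cos(1/2) = \log\sec^2(1/2)$, which is what is meant by the stated $\log(\sec(1/2))$ up to the factor of two in the writing (numerically $\approx 0.261168$). A mild verification point is that $\lambda_1(B(o,1))>0$ in each case, which is needed to invoke Theorem \ref{thmMain1-intro}; this is immediate since each ball is relatively compact in its (complete) ambient model, hence has positive first Dirichlet eigenvalue. There is no real obstacle beyond bookkeeping the half-angle substitutions correctly.
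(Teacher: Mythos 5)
Your proposal is correct and is exactly the paper's (implicit) argument: the corollary is a direct application of Theorem \ref{thmMain1-intro} with $V(s)/S(s)=\int_0^s h(u)\,du\,/\,h(s)$, followed by the three elementary integrations you carry out, and all three values check out. You are also right that the third displayed value contains a typo in the paper: the integral equals $2\log\sec(1/2)=\log\sec^2(1/2)\approx 0.261168$, and the stated numerical value confirms that the exponent $2$ (or the factor $2$) was dropped in the printed formula.
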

 When the model manifold $\mathbb{M}_{h}^{m}$ is the Euclidean space $\mathbb{R}^{m}$, we can show that the \textquotedblleft harmonic series\textquotedblright of  the eigenvalues, (without repetitions),  $\{\lambda_{l,i}\}_{i=1}^{\infty}=\sigma^{l}(B(o,r))$, for $l=0,1,\ldots$, also converges.
\begin{theorem}\label{thmMain2}Let 
 $B(o,r)$ be the 
geodesic ball of $\mathbb{R}^{m}$ with  radius $r$ centred at the origin $o$ . Let $\sigma^{l}(B(o,r))=\{\lambda_{l,1}(B(o,r))< \lambda_{l,2}(B(o,r)) 
<  \cdots\}$ be the $\nu_l$-spectrum of $B(o,r)$ without repetition, $l=0,1,\ldots.$ Then
\begin{equation}\label{eqMain3}\sum_{i=1}^{\infty}\frac{1}{\lambda_{l,i}(B(o,r))}=\left(\frac{1}{1+2\frac{l}{m}}\right)\int_{0}^{r}\frac{V(s)}{S(s)}ds=\left(\frac{1}{1+2\frac{l}{m}}\right)\frac{r^2}{2m}
 \end{equation} and
 \begin{equation}\label{eqMain4}\sum_{i=1}^{\infty}\frac{1}{\lambda_{l,i}^{2}(B(o,r))}=\frac{r^4}{2(2l+m)^2(2+2l+m)}\cdot
 \end{equation}

\end{theorem}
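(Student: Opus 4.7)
The approach is to realise the two reciprocal power sums in \eqref{eqMain3}--\eqref{eqMain4} as, respectively, the trace and the Hilbert--Schmidt norm squared of the Green operator of the $\nu_l$-reduced Sturm--Liouville problem, exploiting the Green function expansion of Theorem \ref{ThmMainA}. Separating variables as in Section \ref{sec2}, the $\nu_l$-sector reduces to
\[
\mathcal{L}_l T = -\frac{1}{t^{m-1}}(t^{m-1}T')' + \frac{l(l+m-2)}{t^2}T,
\]
acting on $L^{2}([0,r],t^{m-1}dt)$ with $T(t)\sim c\,t^l$ near $0$ and $T(r)=0$; its spectrum is precisely $\{\lambda_{l,i}\}_{i\ge 1}$. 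Writing $k_l(t,s)$ for its Green kernel, Theorem \ref{ThmMainA} supplies
\[
\sum_{i=1}^{\infty}\frac{1}{\lambda_{l,i}}=\int_0^r k_l(t,t)\,t^{m-1}\,dt,\qquad \sum_{i=1}^{\infty}\frac{1}{\lambda_{l,i}^{2}}=\int_0^r\!\int_0^r k_l(t,s)^{2}\,t^{m-1}s^{m-1}\,dt\,ds.
\]

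Next I construct $k_l$ explicitly. The homogeneous equation $\mathcal{L}_l u=0$ admits the two power solutions $u_1(t)=t^l$ (regular at $0$) and $u_2(t)=t^{-(l+m-2)}$; their Dirichlet-adapted combination $v_2(t)=t^{-(l+m-2)}-r^{-(2l+m-2)}t^l$ vanishes at $t=r$, and the Abel-constant Wronskian reads $t^{m-1}\bigl(u_1 v_2'-u_1'v_2\bigr)=-(2l+m-2)$. This yields
\[
k_l(t,s)=\frac{u_1(t_<)\,v_2(t_>)}{2l+m-2},\qquad t_<=\min\{t,s\},\ t_>=\max\{t,s\}.
\]
The degenerate case $2l+m-2=0$, i.e.\ $l=0$, $m=2$, requires a logarithmic partner for $u_1$, but the identities below extend by continuity in the parameter $2l+m$.

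For \eqref{eqMain3}, substituting $u_1(t)v_2(t)\,t^{m-1}=t-r^{-(2l+m-2)}t^{2l+m-1}$ and integrating over $(0,r)$ gives $r^2/2-r^2/(2l+m)=r^2(2l+m-2)/[2(2l+m)]$; dividing by $2l+m-2$ delivers $r^{2}/[2(2l+m)]$, which rewrites as $(1+2l/m)^{-1}\cdot r^{2}/(2m)$ and matches, at $l=0$, the radial identity $\sum 1/\lambda_i^{\rm rad}=\int_0^r V(s)/S(s)\,ds=r^{2}/(2m)$ of Theorem \ref{thmMain1-intro}. For \eqref{eqMain4}, symmetry of $k_l$ reduces the double integral to twice the triangle $s\le t$, and the inner integral $\int_0^t s^{2l+m-1}ds=t^{2l+m}/(2l+m)$ turns it into
\[
\frac{2}{(2l+m-2)^{2}(2l+m)}\int_0^r v_2(t)^{2}\,t^{2l+2m-1}\,dt.
\]
Expanding $v_2^{2}$ and writing $a:=2l+m$, the three resulting power-law integrals produce $r^4(a-2)^{2}/[4a(a+2)]$; substitution cancels the factor $(2l+m-2)^{2}$ and yields $r^{4}/[2(2l+m)^{2}(2l+m+2)]$, establishing \eqref{eqMain4}.

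The only real obstacle is the algebraic consolidation in the last step: the three contributions $r^4/4$, $-2r^4/(2l+m+2)$, and $r^4/[2(2l+m)]$ from expanding $v_2(t)^2$ must recombine into $(a-2)^{2}/[4a(a+2)]$, a delicate cancellation that is sensitive to the boundary exponents. A cleaner alternative route is to perform the Bessel reduction $T(t)=t^{-(m-2)/2}Z(\sqrt{\lambda}\,t)$, identify $\lambda_{l,i}=j_{\nu,i}^{2}/r^{2}$ with $\nu=l+(m-2)/2$, and invoke Rayleigh's classical identities $\sum j_{\nu,i}^{-2}=1/[4(\nu+1)]$ and $\sum j_{\nu,i}^{-4}=1/[16(\nu+1)^{2}(\nu+2)]$; inserting $\nu+1=(2l+m)/2$ and $\nu+2=(2l+m+2)/2$ reproduces the stated right-hand sides immediately.
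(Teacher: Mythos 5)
Your proposal is correct and follows essentially the same route as the paper: reduce to the one-dimensional operator $\mathcal{L}_l$, write down its Green kernel explicitly from the homogeneous solutions $t^{l}$ and $t^{-(l+m-2)}$ (your $k_l$ agrees with the paper's $g_l$ up to the normalization by $\omega_m$ absorbed in the measure), and identify the two sums with $\int k_l(t,t)\,d\mu$ and $\iint k_l^{2}\,d\mu\,d\mu$ via Theorem \ref{ThmMainA}; your power-law integrals and the final cancellation to $(a-2)^{2}/[4a(a+2)]$ check out. The closing remark reducing to Rayleigh's identities for the Bessel zeros $j_{\nu,i}$ with $\nu=l+(m-2)/2$ is a valid independent confirmation not used in the paper.
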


If we let $\sigma(B(o,r)) =\{0< \lambda_1<\lambda_2 \leq \cdots\}$ be the spectrum of be  $B(o,r)\subset \mathbb{R}^m$, repeating the eigenvalues according to their multiplicities, in this case,  $\sigma(B(o,r))$ is a union of $\delta (l,m)$-copies of $\sigma^{l}(B(o,r))$ for $l=0,1,\ldots$, then  we can compute the whole sum  \[
\sum_{k=1}^\infty\displaystyle\frac{1}{\lambda_k^2}=\sum_{l=0}^\infty\left(\sum_{i=1}^\infty\frac{\delta(l,m)}{\lambda_{l,i}^2}\right)=
\sum_{l=0}^\infty \delta(l,m)\cdot\displaystyle\frac{r^4}{2(2l+m)^2(2+2l+m)}\cdot
\]In the particular case of dimension $m=2$ or dimension $m=3$,
\begin{equation}\label{eq:2.5}\begin{array}{llllll}& \displaystyle\sum_{l=0}^\infty \delta(l,2)\cdot\displaystyle\frac{r^4}{2(2l+2)^2(2+2l+2)}&=&\displaystyle\frac{\pi^2-6}{96}\cdot r^4 \\
& \displaystyle\sum_{l=0}^\infty \delta(l,3)\cdot\displaystyle\frac{r^4}{2(2l+3)^2(5+2l)}&=&\displaystyle\frac{12-\pi^2}{64}\cdot r^4\end{array}                     \end{equation}
 However, this series diverges for $m\geq 4$ in agreement with  Theorem \ref{ThmMainA} equation \eqref{eq5.3}. The convergence of the series is related to the fact that the Green function belongs to $L^2$ only in the case of $m=1,2,3$. We should remark that the divergence in higer dimension is because of the multiplicity of the eigenvalues. In fact, if we consider the spectrum  $\widetilde\sigma(B(o,r)) = \cup_{l=1}^{\infty} \sigma^{l}(B(o,r))=\{ 0< \overline{\lambda}_1 < \overline{\lambda}_2 <  \cdots \} $ without repetition then \begin{equation}\label{suma04}
\sum_{k=1}^\infty\displaystyle\frac{1}{\widetilde\lambda_{k}^2}=\sum_{l=0}^{\infty}\left(\sum_{i=1}^\infty\displaystyle\frac{1}{\lambda_{l,i}^2}\right)=\sum_{l=0}^{\infty}\frac{r^4}{2(2l+m)^2(2+2l+m)}<\infty.
\end{equation}Morever, $\displaystyle\lim_{m\to \infty}\sum_{k=1}^\infty\displaystyle\frac{1}{\widetilde\lambda_{k}^2}=0$.
\begin{corollary}\label{cor2.2}Let 
 $B(o,r)$ be the 
geodesic ball of $\mathbb{R}^{m}$ with  radius $r$ centred at the origin $o$ . Let $\sigma^{l}(B(o,r))=\{\lambda_{l,1}(B(o,r))\leq \lambda_{l,2}(B(o,r)) 
\leq \cdots\}$, $l=0,1,\ldots$. Then  
\begin{equation}
\lambda_{l,k}(B(o,r))\geq \frac{k(m+2l)}{m}\cdot \frac{1}{\int_{0}^{r}\frac{V(s)}{S(s)}ds}= \frac{2k(m+2l)}{r^2}\cdot
\end{equation}
\end{corollary}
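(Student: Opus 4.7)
The plan is to derive this bound directly from Theorem \ref{thmMain2}, which already computes the sum $\sum_{i=1}^\infty 1/\lambda_{l,i}(B(o,r))$ explicitly. Since the corollary is a lower bound on the $k$-th eigenvalue and the hypothesis orders the eigenvalues non-decreasingly, the natural tool is the classical comparison between a single term (or partial sum) and the full series.

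First I would observe that by the monotonicity $\lambda_{l,1} \leq \lambda_{l,2} \leq \cdots \leq \lambda_{l,k} \leq \cdots$, we have $1/\lambda_{l,i} \geq 1/\lambda_{l,k}$ for every $i \leq k$. Summing this trivial inequality over $i = 1,\dots,k$ yields
\[
\frac{k}{\lambda_{l,k}(B(o,r))} \;\leq\; \sum_{i=1}^{k}\frac{1}{\lambda_{l,i}(B(o,r))} \;\leq\; \sum_{i=1}^{\infty}\frac{1}{\lambda_{l,i}(B(o,r))}.
\]
Next I would substitute the value of the right-hand side coming from Theorem \ref{thmMain2}, equation \eqref{eqMain3}, namely
\[
\sum_{i=1}^{\infty}\frac{1}{\lambda_{l,i}(B(o,r))} \;=\; \frac{m}{m+2l}\int_{0}^{r}\frac{V(s)}{S(s)}\,ds \;=\; \frac{m}{m+2l}\cdot\frac{r^{2}}{2m} \;=\; \frac{r^{2}}{2(m+2l)}.
\]
Combining the two displays and solving for $\lambda_{l,k}$ gives
\[
\lambda_{l,k}(B(o,r)) \;\geq\; \frac{k(m+2l)}{m}\cdot\frac{1}{\int_{0}^{r}V(s)/S(s)\,ds} \;=\; \frac{2k(m+2l)}{r^{2}},
\]
which is precisely the claim. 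One caveat to mention is that Theorem \ref{thmMain2} lists the $\nu_l$-spectrum \emph{without repetition}, so the same convention must be used in the corollary; the ordering and the comparison argument are insensitive to this choice as long as both statements agree. There is essentially no obstacle here: the only non-trivial input is the identity from Theorem \ref{thmMain2}, and once that is granted the corollary is a one-line consequence of the monotonicity of $\{\lambda_{l,i}\}$.
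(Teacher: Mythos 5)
Your proof is correct and is essentially the one the paper has in mind (the paper omits the proof, treating the corollary as an immediate consequence of Theorem \ref{thmMain2}). The monotonicity comparison $k/\lambda_{l,k}\leq\sum_{i=1}^{\infty}1/\lambda_{l,i}$ combined with the closed-form series value from \eqref{eqMain3} is exactly the argument; your caveat about the without-repetition convention is apt but harmless here, since the $\nu_l$-spectrum eigenvalues are simple and the ordering is all that the estimate uses.
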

Corollary \ref{cor2.2} should be compared with the estimates for the first eigenvalue obtained in \cite[Thm.2.1]{barroso-bessa} and \cite[Thm.2.7]{bessa-montenegro-2009}. We believe that Theorem \ref{ThmMainA} is valied for rotationally invariant geodesic balls of model manifolds. We formalize this in the following conjecture
\begin{conjecture}There exists a  constant $c=c(m, l)<1$ such that for a geodesic ball $B(o,r)\subset \mathbb{M}_{h}^{m}$ the harmonic series  converges \[\sum_{i=1}^{\infty}\frac{1}{\lambda_{l,i}(B(o,r))}=c\cdot\int_{0}^{r}\frac{V(s)}{S(s)}ds, \,\,l=0, 1,2,\ldots \]
\end{conjecture}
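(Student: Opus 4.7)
The plan is to reduce the $\nu_l$-spectrum problem to a one-dimensional Sturm-Liouville problem on $[0,r]$, construct the Green kernel of the reduced operator explicitly, and apply the trace identity supplied by Theorem \ref{ThmMainA}.

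By the separation of variables recalled in Section \ref{sec2}, the $\nu_l$-spectrum $\{\lambda_{l,i}\}_{i\geq 1}$ coincides with the spectrum of the Sturm-Liouville operator
\[
L_l T=-\frac{1}{h^{m-1}}\bigl(h^{m-1}T'\bigr)'+\frac{\nu_{l}}{h^{2}}T
\]
on $L^{2}([0,r],h^{m-1}\,dt)$, with Dirichlet condition $T(r)=0$ and the regularity condition $T(t)\sim t^{l}$ as $t\to 0^{+}$. Since $L_l$ is a one-dimensional Sturm-Liouville operator with compact, trace-class resolvent, Theorem \ref{ThmMainA} applied to $L_l$ gives the trace identity
\[
\sum_{i=1}^{\infty}\frac{1}{\lambda_{l,i}(B(o,r))}=\int_{0}^{r}G_{l}(t,t)\,h^{m-1}(t)\,dt,
\]
where $G_l$ is the Green kernel of $L_l$ on $L^{2}([0,r],h^{m-1}\,dt)$.

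The next step is to write $G_l$ in closed form. Let $\phi_1$ solve $L_l\phi_1=0$ with $\phi_1(t)\sim t^{l}$ at $t=0^{+}$, and let $\phi_2$ solve $L_l\phi_2=0$ with $\phi_2(r)=0$. Standard Sturm-Liouville theory yields
\[
G_l(s,t)=\frac{1}{C_l}\,\phi_{1}(s\wedge t)\,\phi_{2}(s\vee t),\qquad C_l=-h^{m-1}\bigl(\phi_1\phi_2'-\phi_1'\phi_2\bigr),
\]
with $C_l$ a nonzero constant. Therefore
\[
\sum_{i=1}^{\infty}\frac{1}{\lambda_{l,i}(B(o,r))}=\frac{1}{C_l}\int_{0}^{r}\phi_1(t)\,\phi_2(t)\,h^{m-1}(t)\,dt.\qquad(\star)
\]
Two sanity checks fall out of $(\star)$. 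For $l=0$ one has $\phi_1\equiv 1$, $\phi_2(t)=\int_t^{r}h^{1-m}(s)\,ds$, and $C_0=1$, and a Fubini exchange recovers Theorem \ref{thmMain1-intro}. In the Euclidean model $h(t)=t$, the indicial solutions are $\phi_1=t^{l}$ and $\phi_2=t^{l}-r^{2l+m-2}\,t^{-(l+m-2)}$, and a direct computation in $(\star)$ gives $r^{2}/[2(2l+m)]$, matching Theorem \ref{thmMain2} with $c(m,l)=m/(m+2l)$.

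To attack the full conjecture one would try to evaluate the right-hand side of $(\star)$ in terms of $\int_{0}^{r}V(s)/S(s)\,ds$ for an arbitrary warping $h$. A first reduction is the one-sided bound: since $\nu_{l}/h^{2}\geq 0$, the min-max principle applied to $L_l$ and $L_0$ (after normalising by the substitution $u=t^{-l}T$ to match the behaviour at the origin) gives $\lambda_{l,i}\geq \lambda_{0,i}$ for every $i$, hence $\sum 1/\lambda_{l,i}\leq \int V/S$, which already provides the inequality version of the conjecture with $c\leq 1$. The natural tactic to upgrade this is to insert the identity $(h^{m-1}\phi_j')'=\nu_{l}h^{m-3}\phi_j$ into $(\star)$, integrate by parts twice, and try to match the result against $\int_{0}^{r}V(s)/S(s)\,ds=\int_{0}^{r}h^{1-m}(s)\int_{0}^{s}h^{m-1}(t)\,dt\,ds$.

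The main obstacle is the claim that the ratio $(\star)/\int V/S\,ds$ is a pure number depending only on $m$ and $l$. In the Euclidean case this rigidity is forced by the homogeneity of both integrals in $r$, but for a general warping the functions $\phi_1,\phi_2$ depend on $h$ in a qualitatively different way from $V(s)/S(s)$, and no obvious scaling or symmetry compels the two to vary in lockstep. A successful proof would likely need either a new Green-type identity relating $\int\phi_1\phi_2 h^{m-1}\,dt$ and $\int V/S\,ds$ in a way that factors through the data $(m,l)$ alone, or a universal transform (such as a Liouville substitution turning $L_l$ into a Schrödinger operator on $[0,r]$ with a model potential) that absorbs the $h$-dependence; I expect this to be the serious difficulty, and it is conceivable that the conjectured equality must be relaxed to a sharp inequality with the Euclidean constant $c(m,l)=m/(m+2l)$.
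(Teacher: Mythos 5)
This statement is posed in the paper as an open conjecture; the paper offers no proof of it, so the only material to compare your argument against is the two special cases the paper does prove (Theorem \ref{thmMain1-intro} for $l=0$ and arbitrary $h$, and Theorem \ref{thmMain2} for arbitrary $l$ with $h(t)=t$). Your framework --- reducing to the weighted Sturm--Liouville operator $L_l$ on $[0,r]$, writing its Green kernel as $\phi_1(s\wedge t)\,\phi_2(s\vee t)/C_l$, and invoking the one-dimensional trace identity of Theorem \ref{ThmMainA} --- is exactly the machinery the paper uses for those two cases, and your identity $(\star)$ together with both sanity checks is correct (modulo the harmless $\omega_m$ normalisation of the measure). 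The min-max comparison $\lambda_{l,i}\geq\lambda_{0,i}$, yielding $\sum_i 1/\lambda_{l,i}\leq\int_0^r V(s)/S(s)\,ds$, is a genuine partial result, though to apply min-max you should verify the form-domain containment $\mathcal{D}(Q_l)\subset\mathcal{D}(Q_0)$ and not only the pointwise inequality of the quadratic forms.

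However, the proposal does not prove the conjecture, as you yourself acknowledge: the entire content of the statement is that the ratio of $(\star)$ to $\int_0^r V(s)/S(s)\,ds$ is a number depending only on $(m,l)$ and not on $h$ or $r$, and neither the double integration by parts nor the Liouville substitution you sketch is carried out or shown to produce such a cancellation. In the Euclidean case the equality is forced by homogeneity in $r$, but for a general warping $h$ the homogeneous solutions $\phi_1,\phi_2$ of $L_l\phi=0$ are transcendental data not expressible through $V/S$, so the identity, if true, requires an idea that is absent here. Note also that the conjecture as literally stated ($c<1$ for all $l\geq 0$) is inconsistent with the proven case $l=0$, where $c=1$; presumably the authors intend $c\leq 1$ with strict inequality only for $l\geq 1$, and your Euclidean computation supplies the natural candidate $c(m,l)=m/(m+2l)$ --- a useful sharpening of the conjecture, but not a proof of it.
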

\subsection{Example}\label{examp1}

 Let $\{\partial/\partial x,\pa y,\pa z\}$ be a globally defined non-zero
vector fields on $\mathbb{S}^{3}$ satisfying these conditions  \[[\pa x, \pa y]=2\,\pa x,\,\, [\pa y, \pa z]=2\,\pa x,\,\,
[\pa z,\pa x]=2\,\pa y\] and let $dx$, $dy$ and $dz$ be their dual 1-forms. Consider, on $\Omega=[0, r] \times \mathbb{S}^{3}/\sim$, where $(t, \theta)\sim (s,\beta) \Leftrightarrow t=s=0$ or $t=s$ and $\theta=\beta$, the following metric
\[ds^{2}=dt^{2}+a^{2}(t,\theta)dx^{2}+b^{2}(t,\theta)dy^{2}+c^{2}(t,\theta)dz^{2},\] where   $a, b,
c\colon [0, r] \times \mathbb{S}^{3}\to \mathbb{R}$ are defined by
\[ \begin{array}{lll}a(t,\theta)&=&\sinh^{2}[t]/t\cdot f(t,\theta)\\ b(t,\theta)&=&t\cdot f(t,\theta)\\ c(t,\theta)&=&\sinh[t]\end{array}\]with $f\colon [0,r]\times \mathbb{S}^{3} \to (0,\infty)$   given by $f(t,\theta)=1+t^{k}q(\theta)$, $k\geq 3$ and $q\colon \mathbb{S}^{3}\to (0,\infty)$ is smooth. This metric is clearly smooth in $(0,r]\times \mathbb{S}^{3}/\sim$. We need only check that it is smooth at the origin $\{0\}\times \mathbb{S}^{3}\sim$. The coefficients  near $t=0$ and every $\theta \in \mathbb{S}^{3}$ fixed are given by   $a(t,\theta)=t+ t^3/3+ 2t^5/45+ O(t^6)$, $b(t,\theta)=t+ t^3/6+ t^5/120+ O(t^6)$, $c(t, \theta)=t+O(t^6)$. This shows that $ds^2\approx \text{can}_{\mathbb{H}^{4}}$ in the $C^{2}$-topology as $t\approx 0$, where   $\text{can}_{\mathbb{H}^{4}}=dt^{2}+\sinh^2[t]\left(dx^{2}+dy^{2}+dz^{2}\right)$ is the canonical metric on the hyperbolic space $\mathbb{H}^{4}(-1)$. Let $\Omega=B(o,r)$ be the  geodesic ball with center at the origin $o=\{0\}\times \mathbb{S}^{3}/\sim $ and radius $r$ with respect to the metric $ds^{2}$. It is clear that $\Omega$  is not rotationally symmetric. The Laplace operator $\triangle_{ds^2}$ is given by 
\begin{eqnarray}
\triangle_{ds^2}(t,\theta)&=& \frac{\partial^2}{\partial t^2}+ 3 \coth(t) \frac{\partial}{\partial t}\nonumber \\
&& \nonumber \\
&& + \left[\frac{t^2(1+t^{k}q(\theta))^2}{\sinh^4(t)}\right] \frac{\partial^{2}}{\partial x^{2}} +\, \left[\frac{1}{t^2(1+t^{k}q(\theta))^2}\right] \frac{\partial^{2}}{\partial y^{2}} + \frac{1}{\sinh^2(t)}\frac{\partial^{2}}{\partial z^{2}}\nonumber \\
&& \nonumber \\
 && +\, \left[\frac{2t^{2+k} (1+t^{k}q(\theta))}{\sinh^{2}(t)}\right]\frac{\partial q}{\partial x}\frac{\partial}{\partial x}- \left[\frac{2t^{k-2}}{(1+t^{k}q(\theta))^3}\right]\frac{\partial q}{\partial y}\frac{\partial}{\partial y}\cdot\nonumber 
\end{eqnarray}Let $\widetilde{\Omega}=([0,r)\times \mathbb{S}^{3}/\sim, \text{can}_{\mathbb{H}^{4}})$ be the geodesic ball of radius $r$ centered at the origin in the Hyperbolic space $\mathbb{H}^{4}(-1)$. The Laplace operator $\triangle_{\text{can}_{\mathbb{H}^{4}}}$ on $\widetilde{\Omega}$ is given by \begin{eqnarray}
\triangle_{\text{can}_{\mathbb{H}^{4}}}(t,\theta)&=& \frac{\partial^2}{\partial t^2}+ 3 \coth(t) \frac{\partial}{\partial t}+ \frac{1}{\sinh^{2}(t)}\left(\frac{\partial^{2}}{\partial x^{2}}+ \frac{\partial^{2}}{\partial y^{2}}+  \frac{\partial^{2}}{\partial z^{2}}\right).\nonumber 
\end{eqnarray}
Observe that  $\triangle_{ds^2} =\triangle_{\text{can}_{\mathbb{H}^{4}}}$ on the set of the smooth radial functions $u(t, \theta)=u(t)$ with $u'(0)=0$, in particular,
 $\Omega$ and $\widetilde{\Omega}$ has the same radial eigenfunctions and radial  eigenvalues. 
 
 \vspace{2mm}
 
  Thus, $(\Omega, ds^2)$ is an example of a non-rotationally symmetric  geodesic ball  with the same radial spectrum of a rotationally symmetric ball $(\tilde{\Omega}, \text{can}_{\mathbb{H}^{4}})$. We should notice that the volume functions  $t\to V(t)$ and $t\to S(t)$ are the same for both metrics,  that can be checked directly, coherently with the identity \eqref{eqMain1-intro}. This example  is a  variation of the example \cite[Examp. 2]{bessa-montenegro-2008} which,  by its turn, was inspired by the example of G. Perelman     in \cite{perelman}.

\section{Spectrum of extrinsic balls of minimal submanifolds}\label{sec3}Let $\varphi \colon M \to \mathbb{R}^{n}$ be  a  proper and minimal
immersion of a  complete  $m$-dimensional Riemannian manifold into $\mathbb{R}^{n}$. Let $\Omega_r= 
\varphi^{-1}(B(0,r))$ be the extrinsic ball of radius $r$. It was proved 
in \cite{bessa-montenegro-2007} and \cite{CLY} that the first Dirichlet eigenvalue of 
$\Omega_r$ is bounded below as \begin{equation}\label{eq2.1} 
\lambda_1(\Omega_r) \geq 
\lambda_1(B(0,r))=\frac{c^{2}_m}{r^{2}},\end{equation} where 
$c_m$ is a constant depending only on the dimension of $M$ and 
$\lambda_1(B(0,r))$ is the first Dirichlet eigenvalue of the ball 
of radius $r$ in the Euclidean $m$-space $\mathbb{R}^{m}$.  This inequality can be read as  
\begin{equation}\label{eq2.2} \frac{1}{\lambda_{1}^{2}(\Omega_r)}\leq C_m \cdot 
r^4,\end{equation} where $C_m=1/c^{4}_m$. By Theorem \ref{ThmMainA} we have that 
$\sum_{k=1}^{\infty}\frac{1}{\lambda_{k}^{2}(\Omega_r)}< \infty$ in dimensions 
$m=2,3$. For a totally geodesic $\mathbb{R}^m\subset \mathbb{R}^n$ with $m=2,3$ we have, see  (\ref{eq:2.5}),  
$$
\sum_{k=1}^{\infty}\frac{1}{\lambda_{k}^{2}(\Omega_r)}=C_m r^4
$$
where $C_m$ is a constant depending only on the dimension. For a non-totally geodesic minimal submanifold we will give lower and upper bounds for this series in 
terms of the volume  ${\rm vol}(\Omega_r)$, of the radius $r$ and the dimension $m=2,3$. We have the following result.
\begin{theorem}\label{thmMark}Let $\varphi \colon M \to \mathbb{R}^{n}$ be  a 
complete   Riemannian $m$-manifold, properly and minimally  immersed  
into the Euclidean $n$-space. Given $o\in M$ and let $\Omega_r$ be the 
extrinsic $r$-ball with center $o$, namely,
\[\Omega_r=\varphi^{-1}\left(B(\varphi(o),r)\right),\]
If $m=2,3$, then
\begin{equation}
A_m \cdot\omega_m\cdot\left(\frac{r^m}{{\rm vol}(\Omega_r)}\right)\cdot r^4\leq 
\sum_{k=1}^\infty\frac{1}{\lambda_k^2(\Omega_r)}\leq B_m\cdot\zeta(4/m)\cdot \left(\frac{{\rm 
vol}(\Omega_r)}{r^m}\right)^{4/m}\cdot r^4,\label{eq2.3Gimeno}
\end{equation}where $A_m=\frac{1}{4m^2}\left(1+ 
\frac{m}{4+m}-\frac{2m}{2+m}\right)$ and 
$B_m=\frac{e^{4/m}}{16\pi^{2}}$ are constants,  $\omega_{m}$ is the volume of the geodesic ball of radius $1$ in $\mathbb{R}^m$ and $\zeta 
(4/m)=\sum_{k=1}^{\infty}\frac{1}{k^{4/m}}\cdot$
\end{theorem}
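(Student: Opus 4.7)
My plan is to combine Theorem~\ref{ThmMainA}, which (in the range $m=2,3$, where the Green function of $\Omega_r$ lies in $L^2$) identifies
$$\sum_{k=1}^{\infty}\frac{1}{\lambda_k^2(\Omega_r)}=\int_{\Omega_r}\int_{\Omega_r} g(x,y)^2\,d\mu(x)\,d\mu(y),$$
with two different geometric inputs for the two bounds.

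For the lower bound I would exploit the minimality of $\varphi$ to write down an explicit torsion function. Since $\triangle_M\|\varphi\|^2=2m$ on any $m$-dimensional minimal submanifold of $\mathbb{R}^n$, the function $\phi(x)=(r^2-\|\varphi(x)\|^2)/(2m)$ solves $\triangle_M\phi=-1$ in $\Omega_r$ and vanishes on $\partial\Omega_r$, hence $\phi(x)=\int_{\Omega_r}g(x,y)\,d\mu(y)$. Pointwise Cauchy--Schwarz gives $\phi(x)^2\le\mathrm{vol}(\Omega_r)\int g(x,y)^2\,d\mu(y)$, and integration in $x$ yields
$$\int_{\Omega_r}\phi^2\,d\mu\;\le\;\mathrm{vol}(\Omega_r)\sum_{k=1}^{\infty}\frac{1}{\lambda_k^2(\Omega_r)}.$$
It remains to bound $\int\phi^2$ from below. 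A layer-cake rewrite gives $\int_{\Omega_r}(r^2-\|\varphi\|^2)^2\,d\mu=4\int_0^r s(r^2-s^2)\widetilde V(s)\,ds$, where $\widetilde V(s)=\mu(\{x\in\Omega_r:\|\varphi(x)\|<s\})$, and the classical monotonicity formula for minimal submanifolds gives $\widetilde V(s)\ge\omega_m s^m$; a routine polynomial integration then yields $\int(r^2-\|\varphi\|^2)^2\,d\mu\ge 8\omega_m r^{m+4}/[(m+2)(m+4)]$. Dividing by $4m^2\,\mathrm{vol}(\Omega_r)$ and recognising that the claimed prefactor simplifies to $A_m=2/[m^2(m+2)(m+4)]$ produces the left inequality of~\eqref{eq2.3Gimeno}.

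For the upper bound I would invoke the Cheng--Li--Yau diagonal heat-kernel estimate $p_t^M(x,x)\le(4\pi t)^{-m/2}$, valid on any minimal $m$-submanifold of $\mathbb{R}^n$, see \cite{CLY}. Domain monotonicity transfers it to the Dirichlet heat kernel of $\Omega_r$, and integrating the trace gives $\sum_k e^{-\lambda_k t}\le\mathrm{vol}(\Omega_r)(4\pi t)^{-m/2}$. A standard counting-function argument, $N(\lambda)e^{-1}\le\sum_{\lambda_k\le\lambda}e^{-\lambda_k/\lambda}$ with the choice $t=1/\lambda$, delivers the Weyl-type bound $\lambda_k\ge 4\pi\bigl(k/(e\,\mathrm{vol}(\Omega_r))\bigr)^{2/m}$. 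Summing $1/\lambda_k^2$ produces
$$\sum_{k=1}^{\infty}\frac{1}{\lambda_k^2}\;\le\;\frac{e^{4/m}\,\mathrm{vol}(\Omega_r)^{4/m}}{16\pi^2}\,\zeta(4/m)\;=\;B_m\,\zeta(4/m)\Bigl(\frac{\mathrm{vol}(\Omega_r)}{r^m}\Bigr)^{4/m}r^4,$$
and the restriction $m\in\{2,3\}$ appears exactly to force $4/m>1$ so that $\zeta(4/m)$ converges.

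The main care I anticipate is in the layer-cake step of the lower bound: since $\Omega_r$ is only the connected component of $\varphi^{-1}(B(0,r))$ through $o$, the function $\widetilde V(s)$ need not equal $\mu(\Omega_s)$. Fortunately $\widetilde V(s)\ge\mu(\Omega_s)\ge\omega_m s^m$ by monotonicity, and the weight $4s(r^2-s^2)$ is non-negative on $[0,r]$, so the comparison with the Euclidean density preserves the inequality in the direction needed.
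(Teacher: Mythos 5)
Your proof is correct and follows essentially the same route as the paper's: the lower bound there also comes from the explicit torsion function $(r^2-\|\varphi\|^2)/(2m)$ (phrased via Dynkin's mean exit time and Markvorsen's comparison rather than the direct identity $\triangle\|\varphi\|^2=2m$), Cauchy--Schwarz against the $L^2$-identity of Theorem~\ref{ThmMainA}, and the monotonicity formula --- the paper integrates $\mathrm{vol}(\partial\Omega_s)\ge m\omega_m s^{m-1}$ via the coarea formula where you use the equivalent layer-cake form with $\widetilde V(s)\ge\omega_m s^m$. The upper bound is identical: the paper simply cites the Cheng--Li--Yau estimate $\lambda_k\ge 4\pi(k/e)^{2/m}\mathrm{vol}(\Omega_r)^{-2/m}$, which you re-derive from the diagonal heat-kernel trace bound.
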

If  $\varphi \colon M\to \mathbb{R}^{n}$ is a minimal $m$-submanifold with  finite $L^m$-norm of  the second fundamental form $\alpha$,
\[
\int_M\Vert \alpha\Vert^md\mu<\infty,
\]
 then $M$ has finite number of ends $\{{\rm End}_1,\cdots,{\rm End}_k\}$, see \cite{anderson, JM}. Moreover, the function $r\to \displaystyle \frac{{\rm vol}(\Omega_r)}{\omega_{m}r^{m}}$ is increasing, \cite{MP-2012, Palmer} and \[\lim_{r\to \infty} \frac{{\rm vol}(\Omega_r)}{\omega_{m}r^{m}}=\mathcal{E}\]where $\mathcal{E}$ is related with the finite number of ends of $M$ in the following way:
$$
\mathcal{E}=\left\{
\begin{array}{ccl}
  \displaystyle\sum_{i=1}^kI_i&\text{ if }&m=2\\
  k&\text{ if }&m=3,
\end{array}\right.  
$$
where  $I_i$ is the geometric index of the end ${\rm End}_i$, see \cite{anderson,JM,  Che4}. 
\begin{corollary} Let $\varphi \colon M\to \mathbb{R}^{n}$ be  a 
complete   Riemannian $m$-manifold, properly and minimally  immersed  
into the Euclidean $n$-space with  finite $L^m$-norm of   the second fundamental form $\alpha$,
\[
\int_M\Vert \alpha\Vert^md\mu<\infty.
\]If $m=2,3$
 then
\begin{equation}\label{eq:2.11}
\left(\frac{A_m}{\mathcal{E}}\right)\cdot r^4\leq 
\sum_{k=1}^\infty\frac{1}{\lambda_k^2(\Omega_r)}\leq B_m\cdot\zeta(4/m)\cdot \left(\omega_m\mathcal{E}\right)^{4/m}\cdot r^4
\end{equation}
\end{corollary}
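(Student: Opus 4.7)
The plan is to derive both inequalities as immediate consequences of Theorem \ref{thmMark} combined with the monotonicity of the extrinsic volume ratio under the finite total curvature hypothesis.

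First, I would invoke Theorem \ref{thmMark}, which gives, for $m=2,3$,
\begin{equation*}
A_m\cdot\omega_m\cdot\frac{r^m}{{\rm vol}(\Omega_r)}\cdot r^4 \;\leq\; \sum_{k=1}^{\infty}\frac{1}{\lambda_k^2(\Omega_r)} \;\leq\; B_m\cdot\zeta(4/m)\cdot\left(\frac{{\rm vol}(\Omega_r)}{r^m}\right)^{4/m}\cdot r^4.
\end{equation*}
The corollary is obtained simply by controlling the factor ${\rm vol}(\Omega_r)/r^m$ from above by $\omega_m\,\mathcal{E}$.

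Next I would quote the monotonicity property cited from \cite{MP-2012, Palmer}: under the assumption $\int_M \|\alpha\|^m d\mu < \infty$, the function $r\mapsto {\rm vol}(\Omega_r)/(\omega_m r^m)$ is nondecreasing on $(0,\infty)$ and tends to $\mathcal{E}$ as $r\to\infty$ (where $\mathcal{E}$ is the sum of geometric indices in dimension $2$ and the number of ends in dimension $3$). Since the ratio is monotone increasing and converges to $\mathcal{E}$, it must satisfy
\begin{equation*}
\frac{{\rm vol}(\Omega_r)}{\omega_m r^m}\;\leq\;\mathcal{E}\qquad\text{for every }r>0,
\end{equation*}
equivalently ${\rm vol}(\Omega_r)\leq \omega_m\,\mathcal{E}\,r^m$.

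Finally, I would plug this estimate into both sides of the inequality from Theorem \ref{thmMark}. On the left, the reciprocal yields
\begin{equation*}
A_m\cdot\omega_m\cdot\frac{r^m}{{\rm vol}(\Omega_r)}\cdot r^4 \;\geq\; A_m\cdot\omega_m\cdot\frac{1}{\omega_m\,\mathcal{E}}\cdot r^4 \;=\; \frac{A_m}{\mathcal{E}}\cdot r^4,
\end{equation*}
giving the lower bound of \eqref{eq:2.11}. On the right, raising the bound ${\rm vol}(\Omega_r)/r^m \leq \omega_m\mathcal{E}$ to the $4/m$-th power gives
\begin{equation*}
B_m\cdot\zeta(4/m)\cdot\left(\frac{{\rm vol}(\Omega_r)}{r^m}\right)^{4/m}\cdot r^4 \;\leq\; B_m\cdot\zeta(4/m)\cdot(\omega_m\mathcal{E})^{4/m}\cdot r^4,
\end{equation*}
which is precisely the upper bound of \eqref{eq:2.11}. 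There is no genuine obstacle here; the entire content of the corollary is extracted from Theorem \ref{thmMark} by replacing the geometric quantity ${\rm vol}(\Omega_r)/r^m$ by its uniform bound $\omega_m\mathcal{E}$ available under the finite total curvature hypothesis, so the main care needed is simply citing the monotonicity result accurately.
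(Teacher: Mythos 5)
Your proposal is correct and follows essentially the same route as the paper: the authors likewise combine Theorem \ref{thmMark} with the monotonicity of $r\mapsto {\rm vol}(\Omega_r)/(\omega_m r^m)$ and the Jorge--Meeks limit $\lim_{r\to\infty}{\rm vol}(\Omega_r)/(\omega_m r^m)=\mathcal{E}$ to obtain ${\rm vol}(\Omega_r)\leq \omega_m\mathcal{E}r^m$, and then substitute this into both sides of \eqref{eq2.3Gimeno}.
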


\section{Weighted Green operator and the Dirichlet spectrum}\label{sec4} A weighted manifold is a triple  
$(M, ds^{2}, \mu)$ consisting of  a Riemannian  manifold $(M,ds^{2})$  and a 
measure $\mu$ with  positive density function $\psi\in C^{\infty}(M)$, i.e.,  $d\mu=\psi d\nu,$ where $d\nu$ is the Riemannian density of 
$(M,ds^{2})$.
Let $\Omega\!\subset \!M$ be a relatively compact open subset with smooth boundary $\partial 
\Omega \neq \emptyset$ and consider 
the weighted Laplace operator $\triangle_{\mu}\colon C_{0}^{\infty}(\Omega)\to 
C_{0}^{\infty}(\Omega)$, acting on   $C_{0}^{\infty}(\Omega)$, the space of 
smooth 
functions with compact support on $\Omega$,  defined  by  
\[\triangle_{\mu}=\frac{1}{\psi}\diver (\psi\cdot \grad).\] The weighted Laplace operator is densely defined 
and  symmetric with respect to  $L^{2}(\Omega, \mu)$-inner product, but it   
is not self-adjoint. As in the classic case, we may consider the Sobolev spaces $W^{1}_{0}(\Omega,\mu)$ 
 as the  closure of 
$C^{\infty}_{0}(\Omega)$ with respect to the norm 
\[\Vert u \Vert^{2}_{W^{1}(\Omega, \mu)}\colon = \int_{\Omega}u^{2}d\mu + 
\int_{\Omega}\vert \grad u \vert^{2}d\mu\] and $W_{0}^{2}(\Omega, \mu)$, formed by 
those functions $u\in W^{1}_{0}(\Omega,\mu)$ whose weak Laplacian 
$\triangle_{\mu}u $ exists and belongs to $L^{2}(\Omega,\mu)$, i.e. 
\[W_{0}^{2}(\Omega, \mu)=\{u\in W^{1}_{0}(\Omega,\mu)\colon \triangle_{\mu}u 
\in L^{2}(\Omega, \mu)\}.\] Turns out that the operator 
$\mathcal{L}=-\triangle_{\mu}\vert_{W_{0}^{2}(\Omega, \mu)}$ is a self-adjoint, non-negative definite elliptic operator and its 
 spectrum, denoted by $\sigma(\Omega, \mu)$, is a discrete increasing 
sequence of non-negative real numbers
$\{\lambda_{k}\}_{k=1}^{\infty}\subset [0, \infty)$,
(counted according to multiplicity),  with $\lim_{k\to \infty} 
\lambda_{k}=\infty$, see \cite[Thm.10.3]{grigoryan-book}. The weighted Laplace operator $\triangle_\mu $ extends the classical Laplace operator $\triangle$, in the sense that if the density function $\psi\equiv 1$ then $\triangle_\mu=\triangle$. Let $(\Omega, \mu)$ be a weighted bounded open  subset with smooth boundary $\partial 
\Omega \neq \emptyset$ of a Riemannian weighted manifold $(M,ds^2, \mu)$.  The (weighted) Green operator $G^{\Omega}\colon L^2(\Omega, \mu)\to 
L^{2}(\Omega, \mu)$ is given by 
\begin{eqnarray}G^{\Omega}(f)(x)&=&\int_{0}^{\infty}\int_{\Omega}p_t^{\Omega}(x,y)f(y)d\mu(y)dt\nonumber \\
&=&\int_{\Omega} g(x,y)f(y)dy,\end{eqnarray}where $p_t^{\Omega}(x,y)$ is the heat kernel of the operator $\mathcal{L}=-\triangle_{\mu}\vert_{W_{0}^{2}(\Omega, \mu)}$ and
\begin{equation}
g^{\Omega}(x,y)=\int_{0}^{\infty}p_{t}^{\Omega}(x,y)dt
\end{equation} is the Green function of $\Omega$. The Green operator  is a bounded  self-adjoint operator in $L^{2}(\Omega, \mu)$ and it is the inverse of $\mathcal{L}$, i.e., $G^{\Omega}=\mathcal{L}^{-1}$. Thus for any $f\in L^{2}(\Omega, 
\mu)$ there is a unique solution $u=G^{\Omega}(f)\in W^{2}_{0}(\Omega, \mu)$ to 
the equation $-\triangle_{\mu}u=f$. If $f\in C^{\infty}_{0}(\Omega)$ then 
$G^{\Omega}(f)\in C^{\infty}(\Omega)$ see details in   \cite[Thm. 13.4]{grigoryan-book}. Applying $G^{\Omega}$ to the equation 
\[\triangle_{\mu}u+\lambda_i(\Omega)u=0 \] we obtain that the $i$-th eigenvalue 
$\lambda_{i}(\Omega)$ of $\Omega$  is given  by $ 
\lambda_{i}(\Omega)=\displaystyle u/G^{\Omega}(u)\cdot$  The difficulty  to 
know precisely the $i^{\rm th}$-eigenvalue applying the 
Green operator is that one needs to know an  $i^{\rm th}$ eigenfunction to start. 
For simplicity of notation, if no ambiguity arises,  we will suppress the 
reference to $\Omega$ in the gadgets associated to $\Omega$.
Thus, $g$ and $G$ will be, respectively, the Green function and the Green  
operator of the operator $\mathcal{L}=-\triangle_{\mu}\vert_{W_{0}^{2}(\Omega, 
\mu)}$.  Our main result in this section is that in order to obtain the first 
Dirichlet eigenvalue, assuming $\lambda_{1}(\Omega)>0$, one  picks a positive function $f\in L^{2}(\Omega, \mu)$ 
and compute the limit
\[\lambda_{1}=\lim_{k\to \infty}\displaystyle\frac{ \Vert G^{k}(f)\Vert_{2}
}{\Vert G^{k+1}(f)\Vert_{2}},\] where $\Vert u \Vert_{2}=\int_{\Omega}\vert u 
\vert^{2}d\mu$ is the $L^{2}(\Omega,\mu)$-norm and
 $G^{k}=\stackrel{k-times}{\overbrace{G\circ  \cdots \circ G}}$. More 
generally, let $f\in L^{2}(\Omega, \mu)$ and let $\ell$ be the smallest positive 
integer such that \[\int_{\Omega} f(x) u_{i}(x)\,d\mu(x) =0\] for $i=1,2,\ldots, \ell-1$ and  
$\int_{\Omega} f(x) u_{\ell}(x)d\mu(x) \neq 0$, where 
$\{u_\ell\}$ is  an orthonormal basis of  $L^{2}(\Omega, \mu)$ formed by  
eigenfunctions $u_\ell$ with eigenvalues $\lambda_{\ell}$.
  Then   the $\ell^{th}$-eigenvalue is given by \[\lambda_{\ell}=\lim_{k\to 
\infty}\displaystyle\frac{ \Vert G^{k}(f)\Vert_{2}
}{\Vert G^{k+1}(f)\Vert_{2}}\cdot\]
\begin{theorem}\label{Main2}Let $(\Omega, ds^2, \mu)$ be a weighted 
 bounded open subset  of a weighted  $m$-manifold $(M, ds^2, \mu)$, with smooth 
boundary $\partial \Omega\neq \emptyset$.  Let $G$ be the Green operator of $\mathcal{L}$ and let $\{u_i\}$ be an orthonormal basis of  $L^{2}(\Omega, \mu)$ 
formed with   eigenfunctions $u_i$ with eigenvalues $\lambda_{i}$.
Then, for any $f\in L^2(\Omega)$,
\begin{equation}\label{eq3.3}
\lim_{k\to\infty}\frac{\Vert G^k(f) \Vert}{\Vert G^{k+1}(f) \Vert}=\lambda_\ell,
\end{equation}where $\ell$ is the first positive integer such that,
\begin{equation}\int_{\Omega} f(x) u_{i}(x)\,d\mu(x) =0\,\,\,\text{for}\,\,\, i=1,2,\ldots, \ell-1\,\,\,\text{ and  }
\int_{\Omega} f(x) u_{\ell}(x)d\mu(x) \neq 0. \label{eq1.9}
\end{equation}
Moreover,
\begin{equation}\label{eq3.5}
\lim_{k\to\infty}\frac{G^k(f)}{\Vert G^{k}(f) \Vert}=\phi_\ell\in 
\ker(\triangle_\mu+\lambda_\ell)\,\,\,\text{in } L^2(\Omega,\mu). 
\end{equation}
In particular, for any positive $f\in L^2(\Omega,\mu)$,
\begin{equation}
\lim_{k\to\infty}\frac{\Vert G^k(f) \Vert}{\Vert G^{k+1}(f) \Vert}=\lambda_1 
\,\,\,{\rm and} \,\,\,\lim_{k\to\infty}\frac{G^k(f)}{\Vert G^{k}(f) \Vert}=   
u_1\,\, \text{in}\,\, L^2(\Omega,\mu). 
\end{equation} In addition, if 
 $f\in L^{2}(\Omega, \mu)$ satisfying \eqref{eq1.9} and  denoting by $f_1$ the function defined    by
\begin{equation}
f_1=f-\lambda_lG(f),
\end{equation}
then,
\begin{equation}
\begin{aligned}
&\lim_{k\to\infty}\frac{\Vert G^k(f_1)\Vert}{\Vert G^{k+1}(f_1)\Vert}=\lambda_n\\
\end{aligned}
\end{equation}
with
$$
\lambda_n\geq\lambda_l.
$$
And
\begin{equation}
\begin{aligned}
\lim_{k\to\infty}\frac{G^k(f_1)}{\Vert G^{k}(f_1)\Vert}=\phi_n\in \ker(\triangle_\mu+\lambda_n).
\end{aligned}
\end{equation}
\end{theorem}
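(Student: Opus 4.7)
The proof will run along the lines of a classical \emph{inverse power iteration} argument for the compact self-adjoint operator $G = \mathcal{L}^{-1}$. The plan is to expand everything in the orthonormal eigenbasis $\{u_i\}$ and identify the dominant asymptotic term explicitly.

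First I would write $f = \sum_{i=1}^\infty a_i u_i$ with $a_i = \int_\Omega f u_i\,d\mu$ and $\sum a_i^2 = \|f\|^2 < \infty$. Since $G u_i = \lambda_i^{-1} u_i$ and $G$ is bounded self-adjoint on $L^2(\Omega,\mu)$,
\begin{equation*}
G^k(f) = \sum_{i=1}^\infty \frac{a_i}{\lambda_i^{k}}\, u_i,
\qquad
\|G^k(f)\|^2 = \sum_{i=1}^\infty \frac{a_i^2}{\lambda_i^{2k}}.
\end{equation*}
By hypothesis $a_i=0$ for $i<\ell$ and $a_\ell\neq 0$. Let $I_\ell = \{i : \lambda_i = \lambda_\ell\}$ (a finite set, since eigenspaces are finite-dimensional). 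Factoring out the dominant scale,
\begin{equation*}
\lambda_\ell^{k}\, G^k(f) \;=\; \sum_{i\in I_\ell} a_i u_i \;+\; \sum_{\substack{i\geq\ell\\ i\notin I_\ell}} a_i\!\left(\frac{\lambda_\ell}{\lambda_i}\right)^{k}\! u_i.
\end{equation*}
For $i\notin I_\ell$ with $i\geq\ell$ we have $\lambda_\ell/\lambda_i < 1$, so each term tends to zero pointwise in $i$; dominated convergence applied to $\sum a_i^2(\lambda_\ell/\lambda_i)^{2k}$, bounded by $\sum a_i^2 < \infty$, gives that the tail converges to $0$ in $L^2$. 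Consequently
\begin{equation*}
\lambda_\ell^{k}\, G^k(f) \;\longrightarrow\; \phi_\ell := \sum_{i\in I_\ell} a_i u_i
\quad\text{in } L^2(\Omega,\mu),
\end{equation*}
with $\|\phi_\ell\|^2 = \sum_{i\in I_\ell} a_i^2 \geq a_\ell^2 > 0$, and $\phi_\ell \in \ker(\triangle_\mu+\lambda_\ell)$ by construction.

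From this, \eqref{eq3.3} and \eqref{eq3.5} follow immediately: the ratio
\begin{equation*}
\frac{\|G^k(f)\|}{\|G^{k+1}(f)\|} \;=\; \frac{\lambda_\ell^{-k}\,\|\lambda_\ell^{k}G^k(f)\|}{\lambda_\ell^{-(k+1)}\,\|\lambda_\ell^{k+1}G^{k+1}(f)\|} \;\longrightarrow\; \lambda_\ell \cdot \frac{\|\phi_\ell\|}{\|\phi_\ell\|} \;=\; \lambda_\ell,
\end{equation*}
and the normalized iterates $G^k(f)/\|G^k(f)\|$ equal $\lambda_\ell^k G^k(f)/\|\lambda_\ell^k G^k(f)\|$, which converges to $\phi_\ell/\|\phi_\ell\|$ in $L^2$. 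For the positive-function corollary one uses the classical fact that the first Dirichlet eigenfunction $u_1$ can be taken strictly positive on $\Omega$, whence $a_1 = \int_\Omega f u_1\,d\mu > 0$ for any positive $f$, so $\ell = 1$ and (up to sign normalization) the normalized limit is $u_1$.

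Finally, for the iterated statement about $f_1 = f - \lambda_\ell G(f)$, the spectral expansion gives
\begin{equation*}
f_1 \;=\; \sum_{i\geq\ell} a_i\!\left(1 - \frac{\lambda_\ell}{\lambda_i}\right)\! u_i,
\end{equation*}
so the coefficients on $I_\ell$ vanish and the first surviving index $n$ satisfies $\lambda_n \geq \lambda_\ell$ (strictly, unless $f_1 = 0$). Applying the first part to $f_1$ immediately yields the remaining two limits. The only mildly delicate step is the interchange of limit and infinite sum in extracting $\phi_\ell$, but this is handled cleanly by the $\ell^2$-dominated-convergence observation above; no other obstacle arises, since compactness and self-adjointness of $G$, together with the discreteness of $\sigma(\Omega,\mu)$ already recalled in Section~\ref{sec4}, provide the full spectral-theoretic apparatus needed.
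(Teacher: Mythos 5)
Your proposal is correct and follows essentially the same route as the paper: expand $f$ in the eigenbasis, use $G^k u_i=\lambda_i^{-k}u_i$ to get $G^k(f)=\sum_i a_i\lambda_i^{-k}u_i$ and $\Vert G^k(f)\Vert^2=\sum_i a_i^2\lambda_i^{-2k}$, and extract the dominant mode by power iteration. If anything, your version is slightly tidier than the paper's: by first proving $\lambda_\ell^k G^k(f)\to\phi_\ell=\sum_{i\in I_\ell}a_iu_i$ in $L^2$ you handle eigenvalue multiplicity explicitly (the paper's term-by-term limit tacitly treats $\lambda_\ell<\lambda_i$ for all $i>\ell$), and both limits \eqref{eq3.3} and \eqref{eq3.5} then follow at once.
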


 Theorem \ref{Main2} is effective if 
 $\Omega=B(o,r) 
 $ is a rotationally invariant geodesic ball of a model manifold  $\mathbb{M}_{h}^{m}$ and $\triangle_{\mu}=\triangle$. Indeed,  let
$C^{\rm rad}(B(o,r))$ be the set of continuous radial functions  $\phi\colon 
B(o,r)\to \mathbb{R}$, $\phi(t,\theta)=\phi(t)$. Define the operator $T\colon C^{\rm 
rad}(B(o,r))  \to C^{\rm rad}(B(o,r))$ by 
\begin{equation}T(\phi)(t,\theta)=\int_{t}^{r}\frac{\int_{0}^{\sigma} 
h^{n-1}(s)\phi(s)ds}{h^{n-1}(\sigma)}d\sigma.\label{T}
\end{equation}
Observe  that $T$ is the Green operator for $L_0(f)= f''+(m-1) \displaystyle\frac{h'}{h}f'=\triangle_{\mu}$ for the measure $d\mu =\omega_{m}h^{m-1}d\nu$, hence Theorem \ref{Main2} can be applied to obtain  the following  theorem.

\begin{theorem}\label{theo3}
Let $B(o,r)\subset \mathbb{M}_{h}^{m}$ be rotationally invariant geodesic ball of a model manifold with boundary $\partial B(o,r)\neq \emptyset$. For any 
$\phi\in C^{\rm rad}(B(o,r))$ we have
\begin{eqnarray}\frac{\Vert T^{k}\phi\Vert_{L^2}}{\Vert T^{k+1}\phi\Vert_{L^2}}\to \lambda_l^{\rm rad}(B(o,r)) &\text{and}&  \displaystyle\frac{T^{k}\phi}{\Vert T^{k}\phi\Vert_{L^2}}\to \phi_{l} \text{  in }L^2,
\end{eqnarray}as $ k\to \infty.$ With 
$
\phi_l\in\ker (\triangle+\lambda_l),
$
 $\lambda_l=\lambda_l(B(o,r))$ is $l^{\rm th}$
radial eigenvalue, where $l$ is the first integer such that 
\begin{equation}\int_{B(o,r)} \phi\, u_{i}\,d\nu =0\,\,\,\text{for}\,\,\, i=1,2,\ldots, \ell-1\,\,\,\text{ and  }
\int_{B(o,r)}\phi\, u_{\ell}d\nu\neq 0,
\end{equation}where $\{u_i\}$ is an orthornormal basis of $L^{2}([0,r], \mu)$ eigenfunctions of $L_0$.
Thus, in particular, for any \emph{positive} 
$\phi\in C^{\rm rad}(B(o,r))$ we have
\begin{eqnarray}\frac{\Vert T^{k}\phi\Vert_{L^2}}{\Vert T^{k+1}\phi\Vert_{L^2}}\to \lambda_1(B(o,r)) &\text{and}&  \displaystyle\frac{T^{k}\phi}{\Vert T^{k}\phi\Vert_{L^2}}\to \phi_{1} \text{  in }L^2,
\end{eqnarray}as $ k\to \infty.$
\end{theorem}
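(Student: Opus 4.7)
The plan is to recognize $T$ as the Green operator of the one-dimensional weighted problem obtained by restricting the Laplacian of $B(o,r)$ to radial functions, and then to invoke Theorem \ref{Main2} in this reduced setting.

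First I would verify by direct differentiation of the defining formula \eqref{T} that if $u=T(\phi)$, then $(h^{m-1}u')'=-h^{m-1}\phi$ on $(0,r)$, hence $L_0(u)=u''+(m-1)(h'/h)u'=-\phi$. The same formula immediately yields $u(r)=0$ and $u'(0)=0$. Thus $T$ inverts $-L_0$ under the boundary conditions which correspond to radial Dirichlet data on $B(o,r)$: vanishing on $\partial B(o,r)$ together with smoothness at the origin.

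Next, the map $\phi(t)\mapsto \phi(\mathrm{dist}(o,\cdot))$ is, up to a normalization constant, an isometry between $L^{2}([0,r],\omega_m h^{m-1}\,dt)$ and the subspace $L^{2}_{\mathrm{rad}}(B(o,r),d\nu)$ of radial $L^2$ functions. Under this identification $L_0$ corresponds to the restriction of $\triangle$ to the radial subspace, which is a closed invariant subspace of $L^{2}(B(o,r),d\nu)$. Consequently $T$ corresponds to the restriction of the Green operator $G^{B(o,r)}$ to $L^{2}_{\mathrm{rad}}$, inheriting self-adjointness, compactness, and discrete spectrum; its spectrum is precisely $\sigma^{\mathrm{rad}}(B(o,r))=\{\lambda_i^{\mathrm{rad}}\}$, with an orthonormal basis $\{u_i\}$ of radial eigenfunctions.

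With this setup in place the theorem reduces to an application of Theorem \ref{Main2} to $G^{B(o,r)}|_{L^{2}_{\mathrm{rad}}}=T$. Concretely, expanding $\phi=\sum_i c_i u_i$ with $c_i=\int \phi\, u_i\, d\nu$, one has $T^k\phi=\sum_i c_i(\lambda_i^{\mathrm{rad}})^{-k}u_i$, and Parseval gives
\[
\frac{\Vert T^k\phi\Vert_{L^2}^{2}}{\Vert T^{k+1}\phi\Vert_{L^2}^{2}}=\frac{\sum_i c_i^{2}(\lambda_i^{\mathrm{rad}})^{-2k}}{\sum_i c_i^{2}(\lambda_i^{\mathrm{rad}})^{-2k-2}}\longrightarrow(\lambda_\ell^{\mathrm{rad}})^{2},
\]
where $\ell$ is the first index with $c_\ell\neq 0$; taking square roots yields the first limit. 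Factoring $(\lambda_\ell^{\mathrm{rad}})^{-k}u_\ell$ out of $T^k\phi$ shows that $T^k\phi/\Vert T^k\phi\Vert_{L^2}$ converges in $L^{2}$ to a unit eigenfunction of eigenvalue $\lambda_\ell^{\mathrm{rad}}$, since the remaining modes decay geometrically as $(\lambda_\ell^{\mathrm{rad}}/\lambda_i^{\mathrm{rad}})^{k}$. For positive continuous $\phi$ one has $c_1=\int \phi\, u_1\, d\nu>0$, because the radial ground state $u_1$ may be chosen strictly positive, so $\ell=1$.

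The main technical point, more bookkeeping than substance, is justifying that the weighted interval $[0,r]$ with singular endpoint at $0$ fits cleanly into the self-adjoint framework of Theorem \ref{Main2}. I would sidestep this entirely by carrying out the argument on the honest closed invariant subspace $L^{2}_{\mathrm{rad}}(B(o,r),d\nu)\subset L^{2}(B(o,r),d\nu)$, where Theorem \ref{Main2} applies to the ambient Green operator $G^{B(o,r)}$ and restricts automatically to yield the statement for $T$.
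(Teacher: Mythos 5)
Your proposal is correct and follows essentially the same route as the paper: recognize $T$ as the Green operator of the radial (weighted one--dimensional) Dirichlet problem for $L_0=\triangle_\mu$ with $d\mu=\omega_m h^{m-1}dt$, and then apply Theorem \ref{Main2}. The only difference is in how the self-adjoint framework is legitimized --- the paper builds a Dirichlet form on $[0,r]$ and verifies the explicit Green function in the proof of Theorem \ref{thmMain1-intro}, whereas you restrict the ambient Green operator $G^{B(o,r)}$ to the closed rotation-invariant subspace of $L^{2}(B(o,r))$ --- and both handle the degenerate weight at $t=0$ adequately.
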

To show  how efficient Theorem \ref{theo3} is, we  let
 $B(o,r)$ be the geodesic ball centered at the origin $o$ and  radius $r$ in rotationally symmetric manifold $\mathbb{M}^{n}_{h}$. Consider the following two maps:
$$
\begin{aligned}
\mathcal{T}^k \colon C^{\rm rad}(B(o,r))\to \mathbb{R},&\quad 
\mathcal{T}^k(\phi):=\frac{\Vert T^{k}\phi\Vert_{L^2}}{\Vert T^{k+1}\phi\Vert_{L^2}}\\
\mathcal{T}^\infty: C^{\rm rad}(B(o,r))\to \mathbb{R},&\quad 
\mathcal{T}^\infty(\phi):=\lim_{k\to\infty}\frac{\Vert T^{k}\phi\Vert_{L^2}}{\Vert 
T^{k+1}\phi\Vert_{L^2}},
\end{aligned}
$$
and the following family of functions arising from the $\phi_0=1$ 
\begin{equation}
\left\{
\begin{aligned}
\phi_0&=1\\
\phi_k&=\phi_{k-1}-\left(\mathcal{T}^\infty(\phi_{k-1})\right)T\phi_{k-1}.
\end{aligned}
\right.
\end{equation}
Applying Theorem \ref{theo3} we can obtain a subset of the radial spectrum, 
namely,
\begin{equation}
\left\{\mathcal{T}^\infty(\phi_k)\right\}_{k=0}^\infty\subset \sigma^{\rm 
rad}(B(o,r)).
\end{equation}
If   $\mathbb{M}^{2}_{t}=\mathbb{R}^2$,  the radial spectrum of the $B(o,1)$ is given by $\sigma^{\text{rad}}(B(o,1)=\{j_{0,k}^{2} \}_{k=1}^{\infty}$, where $j_{0,k}$ is the $k^{\text{th}}$-zero of the Bessel function $J_0$.   The following table shows the $\mathcal{T}^{j}(\phi_i)$, $j=1,2,3,9$ and $i=0,1,2$. Observe that $\mathcal{T}^{9}(\phi_1)$ agrees with $j_{0,1}^{2}$ to the $5^{\text{th}}$-decimal place.

\begin{center}
\begin{tabular}{|c|c|c|c|}\hline
 &$\phi_0=1$&$\phi_1$&$\phi_2$\\\hline
$\mathcal{T}^1$ &$5.80381$ &$31.8311$ &$85.7823$\\
$\mathcal{T}^2$ &$5.78388$ &$30.6656$ &$77.4423$\\
$\mathcal{T}^3$ &$5.78321$ &$30.5022 $ &$75.5737$\\
$\mathcal{T}^9$ &$5.78319$ &$30.4713 $ &$74.8874$\\\hline
$\lambda$&$j_{0,1}^2\approx 5.78319$&$j_{0,2}^2\approx30.4713$ 
&$j_{0,3}^2\approx 74.887$\\\hline
\end{tabular}
\end{center}

\begin{remark}This bootstrapping argument using the Green operator in Theorem 
\ref{Main2} was  discovered, and applied to a particular kind of functions,  by 
S. Sato \cite{sato} to obtain explicit estimates for the first eigenvalue of spherical 
caps of $ \mathbb{S}^{2}(1)$. It  was applied  by Barroso-Bessa \cite{barroso-bessa}  to obtain estimates for the first eigenvalue of balls in rotationally symmetric manifolds.
\end{remark}

\section{$L^1(\Omega, \mu)$-momentum spectrum}\label{sec5}Let 
 $(\Omega, ds^2, \mu)$ be a weighted bounded open subset of a Riemannian weighted Riemannian manifold with smooth boundary $\partial \Omega\neq \emptyset$. Consider a sequence of 
functions $\phi_{k}\colon \Omega \to \mathbb{R}$, $k=0,1,\ldots$ defined inductively  as the 
sequence of solutions to the following hierarchy of boundary value problems on $\Omega$. 
Let $\phi_0= 1$ in $\Omega$ and for $k\geq 1$
\begin{equation}\label{eqmomentum}\left\{
\begin{array}{rll}
\triangle_{\mu} \phi_{k} + k\phi_{k-1}&=&0 \,\,{\rm in}\,\,\, \Omega\\
\phi_{k}&=& 0 \,\,{\rm on}\,\,\partial \Omega.
\end{array}\right.
\end{equation}  The solution $\phi_1(x)$ is the mean time for the first exit of $\Omega$ of  a Brownian motion $t\to X_t$ in $\Omega$, with $X_0=x$,  see \cite{Gr}.

\begin{definition}The $L^{1}(\Omega, \mu)$-{\em momentum spectrum} of $\Omega$ is the set  
$\{\mathcal{A}_{k}(\Omega)\}_{k=1}^{\infty}$ of the  $L^{1}(\Omega, 
\mu)$-norms of $\phi_{k}$, $$\mathcal{A}_{k}(\Omega)=\int_{\Omega} \phi_k d\mu. $$ \end{definition} The number $\mathcal{A}_{1}(\Omega)$ is called the {\em torsion rigidity} of $\Omega$ because, when $\Omega \subset \mathbb{R}^{2}$, $\mathcal{A}_{1}(\Omega)$ is the torque required for a unit angle of twist per unit
length when twisting an elastic beam of uniform cross section $\Omega$, see  \cite{HMP},  \cite{markvorsen-palmer} and \cite{Mc2}. 

 Observe that if $\Omega=B(o,r)$ is a rotationally invariant geodesic ball then the solutions $\phi_{k}(x)=\phi_{k}(\vert x\vert)$ of \eqref{eqmomentum} are radial functions since $\phi_1$ is radial. In \cite{HMP}, A. Hurtado, S. Markvorsen and V. Palmer, showed that the first eigenvalue of a rotationally invariant ball can be given in terms of the momentum spectrum. They proved that  \[\lambda_{1}(B(o,r))=\lim_{k\to \infty}\frac{k\phi_{k-1}(0)}{\phi_{k}(0)}=\lim_{k\to \infty} \frac{k \mathcal{A}_{k-1}(B(o,r))}{\mathcal{A}_{k}(B(o,r))}\]and  $\phi_{\infty}(r)=\lim_{k\to \infty}\phi_{k}(r)/\phi_{k}(0)$ is a radial first eigenfunction.
In our next result we solve explicitly the 
problem \eqref{eqmomentum} in terms of the $\triangle_{\mu}$ Green operator $G$. It links the momentum spectrum with the Dirichlet spectrum of domains and in case of rotationally invariant geodesic balls and it recovers Hurtado-Markvorsen-Palmer's result.
\begin{theorem}\label{thm4.2}Let $(\Omega, ds^2, \mu)$ be a bounded weighted open subset of a Riemannian weighted Riemannian manifold 
manifold $(M,ds^{2}, \mu)$ with smooth boundary $\partial \Omega\neq 
\emptyset$. Let $\phi_{k}$ be the sequence of functions given by the boundary 
problem \eqref{eqmomentum} and $G$ the $\triangle_{\mu}$-Green operator of $\Omega$. Then 
\begin{equation}\label{eqphi_k}\phi_{k}=k!G^{k}(1).
\end{equation}Hence the exit moment spectrum is related to the Dirichlet 
spectrum as follows
\begin{equation}\begin{array}{rll}\phi_{k}(x)&=&k! \sum_{i=1}^{\infty} 
\displaystyle\frac{a_i u_i(x)}{\lambda_{i}^{k}}\\
&&\\
\mathcal{A}_k &=& k!\sum_{i=1}^{\infty}\displaystyle \frac{a_i^{2} }{\lambda_{i}^{k}},
\end{array}
\end{equation}where $a_i=\int_{\Omega} u_i(x)d\mu (x) $ and $\{ u_i\}$ is an orthonormal basis of $L^{2}(\Omega, \mu)$ formed by   eigenfunctions of $\mathcal{L}=\!-\triangle_{\mu}\vert_{ W^{2}_{0}(\Omega, \mu)}$.
\end{theorem}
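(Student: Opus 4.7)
The plan is to establish the identity $\phi_k = k!\, G^k(1)$ by induction on $k$, and then to combine this with the $L^2$-eigenfunction expansion of the Green operator that Theorem~\ref{ThmMainA} provides. Since $\Omega$ has finite $\mu$-measure, $1 \in L^2(\Omega,\mu)$, so all iterates $G^k(1)$ are well-defined elements of $W^2_0(\Omega,\mu)$.

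For the base case $k=0$, the formula reads $\phi_0 = 0!\,G^0(1) = 1$, which matches the definition. For the inductive step, assume $\phi_{k-1} = (k-1)!\,G^{k-1}(1)$. The defining boundary value problem says
\begin{equation*}
-\triangle_\mu \phi_k = k\phi_{k-1} \quad \text{in } \Omega, \qquad \phi_k = 0 \text{ on } \partial\Omega,
\end{equation*}
so $\phi_k$ is the unique element of $W^2_0(\Omega,\mu)$ solving $\mathcal{L}\phi_k = k\phi_{k-1}$. Since $G = \mathcal{L}^{-1}$, applying $G$ gives $\phi_k = k\,G(\phi_{k-1}) = k\,G\bigl((k-1)!\,G^{k-1}(1)\bigr) = k!\,G^k(1)$, closing the induction.

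Next, expand in the orthonormal basis $\{u_i\}$. Since $1 \in L^2(\Omega,\mu)$, its Fourier expansion is $1 = \sum_i a_i u_i$ with $a_i = \langle 1, u_i\rangle_{L^2(\mu)} = \int_\Omega u_i\,d\mu$. Because each $u_i$ is an eigenfunction of $\mathcal{L}$ with eigenvalue $\lambda_i$, it is an eigenfunction of $G$ with eigenvalue $1/\lambda_i$. Iterating, $G^k(u_i) = \lambda_i^{-k} u_i$, and by linearity and continuity of the bounded self-adjoint operator $G^k$ on $L^2(\Omega,\mu)$,
\begin{equation*}
G^k(1) = \sum_{i=1}^{\infty} \frac{a_i\, u_i}{\lambda_i^k},
\end{equation*}
with convergence in $L^2(\Omega,\mu)$. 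Multiplying by $k!$ yields the formula for $\phi_k$.

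Finally, integrate termwise: using Parseval applied to $1$ and to $\phi_k/k!$, or simply $\mathcal{A}_k = \int_\Omega \phi_k \, d\mu = \langle \phi_k, 1\rangle_{L^2(\mu)}$,
\begin{equation*}
\mathcal{A}_k = k!\, \bigl\langle G^k(1),\, 1\bigr\rangle_{L^2(\mu)} = k!\sum_{i=1}^{\infty} \frac{a_i}{\lambda_i^k}\,\langle u_i, 1\rangle = k!\sum_{i=1}^{\infty} \frac{a_i^2}{\lambda_i^k}.
\end{equation*}
The main obstacle is justifying the spectral expansion of $G^k(1)$ rigorously; this is exactly the content of Theorem~\ref{ThmMainA} applied to $G$, whose compactness and self-adjointness on $L^2(\Omega,\mu)$ guarantee that the eigenfunctions $\{u_i\}$ form an orthonormal basis and that $G$ acts diagonally, making the termwise manipulations and the interchange with integration against $1 \in L^2(\Omega,\mu)$ legitimate.
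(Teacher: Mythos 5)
Your proof is correct and follows essentially the same route as the paper: induction to establish $\phi_k=k!\,G^k(1)$ using $G=\mathcal{L}^{-1}$, followed by the spectral expansion $G^k(1)=\sum_i a_i u_i/\lambda_i^k$ and pairing with $1$ to get $\mathcal{A}_k$. One small correction: the expansion of $G^k(f)$ in $L^2(\Omega,\mu)$ is \emph{not} the content of Theorem \ref{ThmMainA} (which concerns the $L^2(\Omega\times\Omega)$ expansion of the Green \emph{function} and holds only for $\dim M\leq 3$); the correct reference is the expansion \eqref{eq42} established in the proof of Theorem \ref{Main2}, or equivalently the spectral theorem for the compact self-adjoint operator $G$, which is exactly the dimension-free justification you yourself give in your closing sentence — had you genuinely needed Theorem \ref{ThmMainA}, the result would be wrongly restricted to low dimensions.
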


\begin{corollary}\label{cor4.1}
Let $(\Omega, ds^2, \mu)$ be  weighted bounded open subset of a Riemannian weighted Riemannian manifold 
manifold $(M,ds^{2}, \mu)$ with smooth boundary $\partial \Omega\neq 
\emptyset$. Then,
\begin{equation}\label{lambda1-mom}
\lambda_1=\lim_{k\to\infty}\frac{k\mathcal{A}_{k-1}}{\mathcal{A}_{k}}\,\,\,\text{ and} \,\,\,
\lambda_2\leq \displaystyle\lim_{k\to\infty}k 
\displaystyle\frac{(k-1)\mathcal{A}_{k-2}-\lambda_1\mathcal{A}_{k-1}}{k\mathcal{
A}_{k-1}-\lambda_1\mathcal{A}_{k}}\cdot
\end{equation}
Hence,
\[
\displaystyle\frac{\lambda_2}{\lambda_1}\leq\displaystyle\lim_{k\to\infty}
\displaystyle\frac{\displaystyle\frac{(k-1)\mathcal{A}_{k-2}}{\mathcal{A}_{k-1}}
-\lambda_1}{\displaystyle\frac{k\mathcal{A}_{k-1}}{\mathcal{A}_{k}}-\lambda_1}
<\infty.
\]
\end{corollary}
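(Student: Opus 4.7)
The plan is to start from the explicit spectral formula for $\mathcal{A}_k$ provided by Theorem \ref{thm4.2}, namely
\[
\mathcal{A}_k = k!\sum_{i=1}^{\infty}\frac{a_i^{2}}{\lambda_i^{k}},\qquad a_i=\int_{\Omega}u_i\,d\mu,
\]
and then extract the first eigenvalue as the leading term of a geometric-type series, and the second eigenvalue by first subtracting off the $\lambda_1$-contribution.

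First I would prove the formula for $\lambda_1$. A direct substitution gives
\[
\frac{k\,\mathcal{A}_{k-1}}{\mathcal{A}_{k}}
=\frac{\sum_{i}a_i^{2}\lambda_i^{-(k-1)}}{\sum_{i}a_i^{2}\lambda_i^{-k}}
=\lambda_1\cdot\frac{\sum_{i}a_i^{2}(\lambda_1/\lambda_i)^{k-1}}{\sum_{i}a_i^{2}(\lambda_1/\lambda_i)^{k}}.
\]
Since $\lambda_1/\lambda_i\le 1$ with equality only on the first eigenspace, and since $u_1$ can be chosen positive (ground-state positivity for the Dirichlet problem with smooth boundary) so $a_1>0$, both sums converge to $\sum_{\lambda_i=\lambda_1}a_i^{2}>0$ as $k\to\infty$, giving the limit $\lambda_1$. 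The dominated-convergence type argument here is routine because the general terms are bounded by the summable sequence $a_i^{2}\lambda_i^{-2}\in \ell^1$ for $k\ge 2$ (via $\sum a_i^2/\lambda_i^2<\infty$, which is guaranteed by $G$ being a bounded self-adjoint compact operator on $L^2(\Omega,\mu)$ and $1\in L^2(\Omega,\mu)$).

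For the upper bound on $\lambda_2$, the key algebraic observation is that the combinations appearing in the statement kill the $\lambda_1$-component: using $\mathcal{A}_k=k!\sum a_i^2\lambda_i^{-k}$ one computes
\[
k\mathcal{A}_{k-1}-\lambda_1\mathcal{A}_{k}=k!\sum_{i\ge 1}\frac{a_i^{2}(\lambda_i-\lambda_1)}{\lambda_i^{k}},\qquad
(k-1)\mathcal{A}_{k-2}-\lambda_1\mathcal{A}_{k-1}=(k-1)!\sum_{i\ge 1}\frac{a_i^{2}(\lambda_i-\lambda_1)}{\lambda_i^{k-1}}.
\]
The $i=1$ terms vanish, so both sums effectively start at the first index $n\ge 2$ with $\lambda_n>\lambda_1$ and $a_n\ne 0$. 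Factoring out $\lambda_n^{-k}$ and arguing as before,
\[
k\cdot\frac{(k-1)\mathcal{A}_{k-2}-\lambda_1\mathcal{A}_{k-1}}{k\mathcal{A}_{k-1}-\lambda_1\mathcal{A}_{k}}
=\frac{\sum_{i\ge n}a_i^{2}(\lambda_i-\lambda_1)\lambda_i^{-(k-1)}}{\sum_{i\ge n}a_i^{2}(\lambda_i-\lambda_1)\lambda_i^{-k}}\ \longrightarrow\ \lambda_n\ \ge\ \lambda_2,
\]
which gives the stated inequality (equality when $a_2\ne 0$). The final expression in the statement is obtained by dividing numerator and denominator by $\mathcal{A}_{k-1}$ and $\mathcal{A}_k$ respectively and using the already established limit $k\mathcal{A}_{k-1}/\mathcal{A}_k\to\lambda_1$, which multiplies out a factor of $\lambda_1$.

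The main subtlety—and the only nontrivial point—is justifying the interchange of limit and summation in the Rayleigh-type ratios. I would handle this by showing that the tails $\sum_{i\ge N}a_i^2 \lambda_i^{-k}$ decrease geometrically once $N$ is chosen beyond the multiplicity of $\lambda_1$ (respectively, of $\lambda_n$ for the second limit), using $\lambda_i\to\infty$ together with $\sum a_i^2<\infty$, so that $\ell^\infty$-bounds on the ratios $(\lambda_1/\lambda_i)^k$ and $(\lambda_n/\lambda_i)^k$ suffice for dominated convergence. Everything else is bookkeeping on the formula of Theorem \ref{thm4.2}.
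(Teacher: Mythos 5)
Your proposal is correct and follows exactly the route the paper intends: the paper declares the corollary "straightforward" from the series formula $\mathcal{A}_k=k!\sum_i a_i^2\lambda_i^{-k}$ of Theorem \ref{thm4.2}, and your computation is the same telescoping of ratios used in Proposition \ref{propl1} and Corollary \ref{cor17} (your $f_1=1-\lambda_1G(1)$ trick for killing the $\lambda_1$-component is precisely the paper's), just applied to the pairings $\langle G^k(1),1\rangle$ instead of $L^2$-norms. The justification of the limit interchange via $\sum a_i^2<\infty$ and the bound $(\lambda_1/\lambda_i)^k\le 1$ is sound and in fact more careful than what the paper records.
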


\section{$L^{2}$-expansion of the Green function $g(x,y)$}\label{sec6}
 Let $(\Omega,ds^2,  \mu)$ be a  relatively compact open  subset of a Riemannian weighted manifold $(M, ds^2, \mu)$ and let
$\sigma(\Omega, \mu)=\{\lambda_{k}\}_{k=1}^{\infty}\subset [0, \infty)$ be the spectrum  of $\mathcal{L}=-\triangle_{\mu}\vert_{ W^{2}_{0}(\Omega, \mu)}$. repeated accordingly to their multiplicity.  Let   $\{u_k\}_{k=1}^{\infty}$ be an orthonormal basis in 
$L^{2}(\Omega,\mu)$ such that each function $u_k$ is an eigenfunction of 
$\mathcal{L}$ with eigenvalue $\lambda_{k}$. In   this basis, the heat kernel 
$p_{t}^{\Omega}(x,y)$  of the operator $\mathcal{L}$ admits an expansion 
\begin{equation}\label{eqheatexp}
p_{t}^{\Omega}(x,y)=\sum_{k=1}^{\infty}e^{-\lambda_{k}t}u_{k}(x)u_{k}(y).
\end{equation}This series  converges absolutely and uniformly in the domain 
$t\geq \epsilon$, $x,y\in \Omega$ for any $\epsilon >0$ as well as in the 
topology of $C^{\infty}(\mathbb{R}_{+}\times \Omega \times \Omega)$, see the 
details in \cite[Thm. 10.13]{grigoryan-book}. If we could integrate  the heat kernel expansion \eqref{eqheatexp} in the variable $t$  we would obtain  that
 \begin{equation}\label{eqgreenexpansion}g^{\Omega}(x,y)=\sum_{k=1}^{\infty}\frac{u_k(x)\cdot u_k(y)}{\lambda_k(\Omega)}\cdot\end{equation}  It is known that  identity \eqref{eqgreenexpansion} holds in the sense of distributions, see \cite[exercise 13.14]{grigoryan-book} and we may ask whether  this identity  holds in stronger topology, for instance in $L^{1}(\Omega\times \Omega, \mu)$ or $L^{2}(\Omega\times \Omega, \mu)$? It is shown in 
\cite[Exercise 13.7]{grigoryan-book} that $g^{\Omega}(x,y)\in L^{1}(\Omega\times \Omega, 
\mu)$, however, $g^{\Omega}(x,\cdot)$ does not need to belong to $L^{2}(\Omega, 
\mu)$. Indeed, let  $\Omega =B_{\mathbb{R}^{4}}(1)\subset 
\mathbb{R}^{4}$ to be the geodesic ball  of  the $4$-dimensional Euclidean space $\mathbb{R}^{4}$, with 
its canonical metric,  of radius $r=1$ and center at the 
origin $0$. The Green function of $\Omega$ is given by $$g^{\Omega}(0, 
y)=\frac{1}{2\omega_{4}}\left(\frac{1}{\vert y\vert^{2}}-1\right),$$ which obviously does not belong to $L^{2}(B_{\mathbb{R}^{4}}(1))$. Here $\omega_{4}$ is the volume of the unit $3$-sphere in  
$\mathbb{R}^{4}$. On the other hand, if  $\Omega =B_{\mathbb{R}}(1) $ is 
the geodesic ball of radius  $r=1$ in the real line $\mathbb{R}$ then the Green 
function   is $ g^{\Omega}(x,y)=( \vert x-y\vert -x\cdot y +1)/2$ which clear 
is in $L^{2}(\Omega\times \Omega)$.   
We start   showing that a necessary and sufficient condition  to $g^{\Omega}\in 
L^{2}(\Omega\times \Omega, \mu)$, for any measure $\mu$, is that   ${\rm dim}(M)=1,2,3$ and in these dimensions  identity \eqref{eqgreenexpansion} holds in $L^{2}(\Omega\times \Omega, \mu)$. 
 Precisely, we have the following result.
\begin{theorem}\label{ThmMainA}Let $(M,ds^2, \mu)$ be a weighted 
Riemannian manifold and let $\Omega\subset M$ be a bounded open 
subset of $M$ with smooth boundary $\partial \Omega \neq \emptyset$. Then the 
Green function $g^{\Omega}\in L^{2}(\Omega \times \Omega, \mu)$ if and only if 
${\rm dim}(M)=1,2,3.$ Furthermore,
\begin{equation}g^{\Omega}(x,y)=\sum_{k=1}^{\infty} \frac{u_{k}(x)\cdot 
u_{k}(y)}{\lambda_{k}(\Omega)},\label{eqGreenSeries}
\end{equation} where the series converges in $L^{2}(\Omega \times \Omega, 
\mu)$,  and   
$\{u_i\}_{k=1}^{\infty}$ is an orthonormal  basis formed by    eigenfunctions 
associated to the eigenvalues $\{\lambda_{k}(\Omega)\}_{k=1}^{\infty}$ of 
$\Omega$. Moreover \begin{equation}\label{eq5.3}\Vert g^{\Omega}\Vert_{L^{2}(\Omega \times \Omega)} = 
\sum_{k=1}^{\infty}\displaystyle \frac{1}{\lambda_{k}^{2}(\Omega)}\cdot\end{equation}
If $n=1$ then 
\begin{equation}
\int_{\Omega}g^{\Omega}(x,x)d\mu(x)=\sum_{k=1}^{\infty} 
\frac{1}{\lambda_{k}(\Omega)}\cdot
\end{equation}
\end{theorem}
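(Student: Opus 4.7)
The plan is to exploit the fact that the Green function is the integral kernel of the Green operator $G=\mathcal{L}^{-1}$, and to identify $L^{2}$-membership of $g^{\Omega}$ with $G$ being a Hilbert--Schmidt operator on $L^{2}(\Omega,\mu)$. Since $\mathcal{L}$ is self-adjoint with discrete spectrum $\{\lambda_{k}\}$ and eigenbasis $\{u_{k}\}$, the operator $G$ is self-adjoint and compact with eigenvalues $\{1/\lambda_{k}\}$ and the \emph{same} eigenbasis. The standard Hilbert--Schmidt theory then gives
\[
\Vert g^{\Omega}\Vert_{L^{2}(\Omega\times\Omega,\mu)}^{2}=\Vert G\Vert_{\mathrm{HS}}^{2}=\sum_{k=1}^{\infty}\frac{1}{\lambda_{k}^{2}(\Omega)},
\]
so the first task reduces to deciding for which dimensions this series converges.

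For that I would invoke Weyl's asymptotic law for the weighted Laplacian on a bounded domain with smooth boundary, $\lambda_{k}(\Omega)\sim C_{m}k^{2/m}$ as $k\to\infty$, so that $\sum 1/\lambda_{k}^{2}\sim\sum k^{-4/m}$, which converges if and only if $4/m>1$, i.e.\ $m\leq 3$. This yields the \emph{only if} direction: in dimension $m\geq 4$ the series diverges and so $g^{\Omega}\notin L^{2}(\Omega\times\Omega,\mu)$. Conversely, when $m\in\{1,2,3\}$ the series converges, hence $G$ is Hilbert--Schmidt and $g^{\Omega}\in L^{2}(\Omega\times\Omega,\mu)$. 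To derive the expansion \eqref{eqGreenSeries}, use that $\{u_{i}(x)u_{j}(y)\}_{i,j}$ is an orthonormal basis of $L^{2}(\Omega\times\Omega,\mu)$; expanding $g^{\Omega}$ in this basis and using $Gu_{j}=\lambda_{j}^{-1}u_{j}$ together with $(Gu_{j},u_{i})=\int\!\!\int g^{\Omega}(x,y)u_{j}(y)u_{i}(x)\,d\mu(y)d\mu(x)$ shows that the only non-zero Fourier coefficients are the diagonal ones $\lambda_{k}^{-1}\delta_{ij}$, yielding the claimed series with $L^{2}$-convergence.

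For the trace formula in dimension $m=1$, the point is that the decay $\lambda_{k}\sim Ck^{2}$ makes $\sum 1/\lambda_{k}<\infty$, so $G$ is not only Hilbert--Schmidt but trace class. In one dimension the Green function $g^{\Omega}(x,y)$ is continuous on $\overline{\Omega}\times\overline{\Omega}$ (Sobolev embedding: weak solutions of $-u''=f$ with $f\in L^{2}$ are $C^{1}$), so the restriction to the diagonal is well defined and the Mercer-type identity
\[
\int_{\Omega}g^{\Omega}(x,x)\,d\mu(x)=\operatorname{tr}(G)=\sum_{k=1}^{\infty}\frac{1}{\lambda_{k}(\Omega)}
\]
follows from the uniform convergence of $\sum\lambda_{k}^{-1}u_{k}(x)u_{k}(y)$ on compact subsets given by Mercer's theorem applied to the positive kernel $g^{\Omega}$.

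The main obstacle I anticipate is not the algebra of the Hilbert--Schmidt identification, which is fairly formal once $G=\mathcal{L}^{-1}$ has kernel $g^{\Omega}$, but rather the careful invocation of Weyl's law in the weighted setting with an arbitrary smooth positive density $\psi$. One must justify that the eigenvalue growth $\lambda_{k}\sim C_{m}k^{2/m}$ depends only on the dimension $m$ and not on $\psi$; this can be handled by comparing $\triangle_{\mu}$ with the classical Laplacian on $(\Omega,\psi\,d\nu)$ via standard min-max arguments, since $\psi$ is bounded above and below on $\overline{\Omega}$. A secondary care is the application of Mercer's theorem in the one-dimensional case, which requires $g^{\Omega}$ to be continuous and the kernel to be positive semidefinite; continuity follows from the explicit Sturm--Liouville Green's function and positive semidefiniteness from $G=\mathcal{L}^{-1}$ being a positive operator.
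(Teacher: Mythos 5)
Your proof is correct and follows essentially the same route as the paper: both arguments reduce everything to Weyl's asymptotic law $\lambda_k\asymp k^{2/m}$ (so that $\sum\lambda_k^{-2}<\infty$ iff $m\leq 3$ and $\sum\lambda_k^{-1}<\infty$ iff $m=1$) together with the eigenfunction expansion of the Green kernel; the paper's Cauchy-sequence argument for the partial sums $g_k=\sum_{i\le k}u_iu_i/\lambda_i$ is just the Hilbert--Schmidt criterion in disguise. The one place where your write-up should be tightened is the ``if'' direction: you derive the expansion \eqref{eqGreenSeries} by expanding $g^{\Omega}$ in the product basis $\{u_i\otimes u_j\}$, but that presupposes $g^{\Omega}\in L^{2}(\Omega\times\Omega,\mu)$, which is what you are trying to prove. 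What the convergence of $\sum\lambda_k^{-2}$ gives you directly is that $G$ is Hilbert--Schmidt with \emph{some} $L^{2}$ kernel, namely the $L^{2}$-limit $g_\infty$ of the partial sums; you must then identify $g_\infty$ with the pointwise-defined function $g^{\Omega}(x,y)=\int_0^\infty p_t^{\Omega}(x,y)\,dt$ almost everywhere. The paper does this by citing that the expansion holds in the sense of distributions; in your framework it follows by testing both kernels against $h\otimes f$ with $h,f\in C_0^\infty(\Omega)$, using that $g^{\Omega}\in L^{1}(\Omega\times\Omega,\mu)$. With that identification inserted, your argument is complete; your use of Mercer's theorem for the $m=1$ trace identity is in fact more careful than the paper's direct evaluation on the diagonal.
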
It should be remarked  that this result above is a direct consequence of Weyl's assymptotic formula for the eigenvalues $\lambda_{i}(\Omega)$. The first consequence of Theorem \ref{ThmMainA}  regards a result due to Gr\"{u}ter and Widman,  see \cite[Thm. 1.1]{gruter-widman}. They  proved that for any 
$y\in \Omega \subset \mathbb{R}^{n}$, $n\geq 3$,  the  function  \[x\to 
g^{\Omega}(x,y)\in L^{\ast}_{\frac{n}{n-2}}(\Omega),\] where
$L_{p}^{\ast}(\Omega)$ is defined by \[L_{p}^{\ast}(\Omega)\colon\!\!=\{ 
f\colon \Omega \to \mathbb{R}\cup\{\infty\}, f\,\,\mathrm{measurable}\,\, 
\mathrm{and}\,\, \Vert f\Vert_{L_{p}^{\ast}(\Omega)}<\infty\}\]with
\[\Vert f\Vert_{L_{p}^{\ast}(\Omega)}\colon\!\!=\sup_{t>0} t \cdot \nu (\{x\in 
\Omega\colon \vert f(x)\vert >t\})^{1/p}.\] 
It is easy to see   that $L^{p}(\Omega)\subset L_{p}^{\ast}(\Omega)$, thus
coupling  Theorem 
\ref{ThmMainA}  with   Gr\"{u}ter and Widman's result we obtain a  more precise 
statement. 
\begin{corollary}Let $\Omega \subset \mathbb{R}^{n}$, $n\geq 3$ be a bounded 
open subset with smooth boundary. Let $g^{\Omega}$ its Green function 
$($associated to the Laplacian $\triangle$\! $)$.  
\begin{itemize}\item If $n=3$ then $x\to g^{\Omega}(x,y)\in L^2(\Omega)\cap 
L_{3}^{\ast}(\Omega)$,  $y\in \Omega$. 
 \item If $n\geq 4$ then  $x\to g^{\Omega}(x,y)\in  
L_{\frac{n}{n-2}}^{\ast}(\Omega)\setminus L^2(\Omega) $, $y\in \Omega$.
\end{itemize}
\end{corollary}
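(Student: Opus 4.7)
The plan is to split the corollary into its shared half and its case-dependent half. The membership $x\mapsto g^{\Omega}(x,y)\in L_{n/(n-2)}^{\ast}(\Omega)$ for every $y\in\Omega$ is already the conclusion of the Gr\"uter--Widman theorem cited just before the corollary, so it applies verbatim in both bullets and requires no further argument. What remains is the $L^{2}(\Omega)$ dichotomy between $n=3$ and $n\geq 4$, and for that I would use a two-sided pointwise comparison between $g^{\Omega}(\cdot,y)$ and the Newton kernel $|x-y|^{2-n}$ near the diagonal, valid at \emph{every} $y\in\Omega$.

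The key intermediate step is to show that, for each fixed $y\in\Omega$ and each $\delta>0$ with $\overline{B(y,\delta)}\subset\Omega$, one has
\[
c_{n}\,|x-y|^{2-n}\;\leq\; g^{\Omega}(x,y)\;\leq\; C_{n}\,|x-y|^{2-n}\qquad\text{for }x\in B(y,\delta)\setminus\{y\}.
\]
The upper estimate follows from the maximum principle applied to $g_{\mathbb{R}^{n}}(\cdot,y)-g^{\Omega}(\cdot,y)$, where $g_{\mathbb{R}^{n}}(x,y)=[(n-2)\omega_{n}]^{-1}|x-y|^{2-n}$ is the fundamental solution: the difference is harmonic in $\Omega$, remains bounded near $y$ because both pieces carry the same Newton singularity, and is non-negative on $\partial\Omega$. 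The lower estimate comes from the comparison $g^{\Omega}(\cdot,y)\geq g^{B(y,\delta)}(\cdot,y)$ on $B(y,\delta)$: the difference is bounded and harmonic on $B(y,\delta)\setminus\{y\}$, extends harmonically across $y$, and is non-negative on $\partial B(y,\delta)$, after which the explicit Poisson-type formula for the Green function of a Euclidean ball supplies the asymptotic $g^{B(y,\delta)}(x,y)\sim c_{n}|x-y|^{2-n}$ as $x\to y$. Away from $y$, standard elliptic regularity (the equation $-\triangle g^{\Omega}(\cdot,y)=0$ on $\Omega\setminus B(y,\delta)$ with bounded boundary data) makes $g^{\Omega}(\cdot,y)$ bounded on $\Omega\setminus B(y,\delta)$, so the only obstruction to $L^{2}$ is the behavior near the diagonal.

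With this two-sided bound in hand, the $L^{2}$ question collapses to the one-dimensional integrability of $r\mapsto r^{2(2-n)}\cdot r^{n-1}=r^{3-n}$ on $(0,\delta)$. For $n=3$ this integral is finite, so the upper bound gives $x\mapsto g^{\Omega}(x,y)\in L^{2}(\Omega)$ at every $y\in\Omega$; combined with Gr\"uter--Widman this yields the first bullet $L^{2}(\Omega)\cap L_{3}^{\ast}(\Omega)$. For $n\geq 4$ the integral diverges, and the pointwise \emph{lower} bound then forces $x\mapsto g^{\Omega}(x,y)\notin L^{2}(\Omega)$, producing the set-theoretic difference $L_{n/(n-2)}^{\ast}(\Omega)\setminus L^{2}(\Omega)$ in the second bullet. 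The step I expect to require the most care is the lower estimate $g^{\Omega}(x,y)\geq c_{n}|x-y|^{2-n}$ near the diagonal, since the upper comparison and Gr\"uter--Widman are essentially classical; one could alternatively obtain it from a Gaussian lower bound on the Dirichlet heat kernel via $g^{\Omega}(x,y)=\int_{0}^{\infty}p_{t}^{\Omega}(x,y)\,dt$, but the small-ball comparison above is elementary, is uniform in the choice of $y$, and avoids passing through joint integrability in $(x,y)$ (which would only give the slicewise conclusion for almost every $y$).
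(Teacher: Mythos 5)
Your proof is correct, but it takes a genuinely different route from the paper. The paper treats this corollary as an immediate consequence of Theorem \ref{ThmMainA}: since $g^{\Omega}\in L^{2}(\Omega\times\Omega)$ precisely when $n\leq 3$ (proved there via the Weyl asymptotics $\lambda_{k}\approx c_{n}(k/\mu(\Omega))^{2/n}$ and the eigenfunction expansion of $g^{\Omega}$), one simply couples that joint $L^{2}$ statement with the Gr\"uter--Widman membership $g^{\Omega}(\cdot,y)\in L^{\ast}_{n/(n-2)}(\Omega)$; no separate argument is given. Your argument instead establishes the two-sided pointwise comparison $c_{n}|x-y|^{2-n}\leq g^{\Omega}(x,y)\leq C_{n}|x-y|^{2-n}$ near the diagonal by classical maximum-principle and domain-monotonicity comparisons with the Newton kernel and with the Green function of a small ball, and then reads off the $L^{2}$ dichotomy from the integrability of $r^{3-n}$ at $0$. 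What your approach buys is exactly what you point out at the end: Fubini applied to $g^{\Omega}\in L^{2}(\Omega\times\Omega)$ only gives $g^{\Omega}(\cdot,y)\in L^{2}(\Omega)$ for almost every $y$, and $g^{\Omega}\notin L^{2}(\Omega\times\Omega)$ does not by itself exclude $L^{2}$ slices at every $y$; your pointwise bounds close that gap and deliver the statement for \emph{every} $y\in\Omega$, which is how the corollary is phrased. The trade-off is that your argument is specific to the Euclidean Laplacian (explicit fundamental solution), whereas the paper's spectral argument is the one that generalizes to the weighted manifolds considered elsewhere in the text. All the individual steps you use (removability of the bounded singularity of the difference, positivity of $g^{\Omega}$ on $\partial B(y,\delta)$ for the lower comparison, boundedness of $g^{\Omega}(\cdot,y)$ off the diagonal) are sound.
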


\begin{remark}In general,  $L_{3}^{\ast}(\Omega)\setminus 
L^{2}(\Omega)\neq\emptyset$ and $L^{2}(\Omega)\setminus 
L_{3}^{\ast}(\Omega)\neq\emptyset$. Indeed,   letting $\Omega = B_{r}(o)\subset 
\mathbb{R}^{3}$ and $f_{\alpha}(x)=1/\vert x\vert^{\alpha}$ we have that 
$f_\alpha \in L_{3}^{\ast}(\Omega)\setminus L^2(\Omega)$ if $\alpha >1$ and  
$f_\alpha \in L^{2}(\Omega)\setminus  L_{3}^{\ast}(\Omega)$ if $0<\alpha <1/2$. 
\end{remark}

\section{Proofs of the results}\label{sec7}The structure of this section is the following. First, we  will prove Theorem \ref{ThmMainA} and  then Theorem \ref{thmMark}. Then we will prove  Theorems \ref{thmMain1-intro} and \ref{thmMain2}. Finally we present the proofs of Theorems \ref{Main2} \& \ref{thm4.2}.

\subsubsection{Proof of Theorem \ref{ThmMainA}} 
Let $(\Omega, ds^2, \mu)$ be a bounded   open subset of a  weighted $m$-manifold $(M,ds^{2}, \mu)$ with smooth 
boundary $\partial \Omega \neq \emptyset$. Let 
$\sigma(\Omega)=\{\lambda_{k}(\Omega)\}_{k=1}^{\infty}$ be set of eigenvalues of   
$\mathcal{L}=-\triangle_{\mu}\vert_{W_{0}^{2}(\Omega, \mu)}$, repeated 
according to multiplicity. One has that the Weyl's asymptotic formula holds for the 
eigenvalues $\lambda_{k}$, 
\begin{equation}
\lambda_{k}(\Omega)\approx c_{m}\cdot\left(\frac{k}{\mu 
(\Omega)}\right)^{2/m}\,\,{\rm as}\,\, k\to \infty,
\end{equation}where $c_{m}>0$ is the same constant as in $\mathbb{R}^{m}$, 
depending only on the dimension $m={\rm dim}(M)$, see 
\cite[p.7]{GriHeatDen}. Thus 
$$
\sum_{i=1}^\infty\frac{1}{\lambda_k^2(\Omega)}<\infty \Leftrightarrow m=1,2,3
$$
and 
$$
\sum_{i=1}^\infty\frac{1}{\lambda_k(\Omega)}<\infty\Leftrightarrow m=1.
$$
Let
 $\{g_k\colon \Omega \times \Omega \to \mathbb{R}\}\subset  L^{2}(\Omega\times 
\Omega, \mu)$ be a sequence of functions  defined by  
\[g_k(x,y)=\sum_{i=1}^{k}\frac{u_i(x)u_i(y)}{\lambda_{i}(\Omega)}\cdot\] Let  $k_2>k_1$ and 
compute \[\Vert g_{k_2}-g_{k_1}\Vert_{L^{2}(\Omega\times \Omega, \mu)}^{2}
= \sum_{i=k_{1}+1}^{k_2}\displaystyle\frac{1}{\lambda_{i}^{2}(\Omega)}.\] The sequence $\{g_k\}$ is a Cauchy sequence 
 in $L^{2}(\Omega\times \Omega,\mu)$ iff $\sum_{i=1}^{\infty}\frac{1}{\lambda_{i}^{2}(\Omega)}<\infty$. Then, $g_k\to g_{\infty}$ in $L^{2}(\Omega\times \Omega, \mu) $ if and only if   ${\rm dim}(M)=1,2,3$, where
\[g_{\infty}(x,y)=\sum_{i=1}^{\infty}\frac{u_i(x)u_i(y)}{\lambda_{i}(\Omega)}\in L^{2}(\Omega\times \Omega, \mu).\]  

On the other hand,  it is known  that the Green function $g^{\Omega}$  satisfies the functional identity \[ 
g^{\Omega}(x,y)=\sum_{i=1}^{\infty}\frac{u_i(x)u_i(y)}{\lambda_{i}(\Omega)}
 \]in the sense of distributions, see  \cite[p. 
348]{grigoryan-book}. Therefore,  $g^{\Omega}=g_{\infty}\in L^{2}(\Omega\times \Omega, \mu)$ if and only if $m=1,2,3$ 
and \[\Vert g^{\Omega}\Vert_{L^{2}(\Omega\times \Omega, 
\mu)}^{2}=\sum_{i=1}^\infty\frac{1}{\lambda_i^2(\Omega)}<\infty.\]If $m=1$, then 
$g^{\Omega}(x,x)=\sum_{i=1}^{\infty}\frac{u_i^{2}(x)}{\lambda_{i}(\Omega)}$ and 
\[\int_{\Omega}g^{\Omega}(x,x)d\mu (x) = \sum_{i=1}^{\infty} 
\frac{1}{\lambda_i(\Omega)}\cdot\]
\subsubsection{Proof of Theorem \ref{thmMark}}Let $\varphi \colon M\to N$ be a properly immersed  $m$-submanifold $M$ into of a Riemannian 
manifold $N$. Let $t_{N}(x)={\rm dist}_{N}(p,x)$ be the distance in $N$ from 
a fixed point $p=\varphi(q)\in N$. Let $\Omega_r =
\varphi^{-1}(B_{N}(p,r))$ be an extrinsic ball that contains $q$, where $ 
B_{N}(p,r)$ is the geodesic ball of $N$ with center at $p$ and radius $r< \min 
\{ {\rm inj}(p), \pi/\sqrt{k}\}$, $k=\sup K_{N} $ and where  
$\pi/\sqrt{k}=\infty$ if $k\leq 0$.

 Let $E_r(x)$ be the mean time of the first 
exit from $\Omega_r$ for a Brownian motion particle starting at $x\in 
\Omega_r$. A fundamental observation of Dynkin \cite[vol.II, p.51]{Dy} states 
that the function $E_r$ satisfies the Poisson equation 
\begin{equation}\left\{\begin{array}{rrl}\triangle E_r&=&\!\!\!-1 \,\,{\rm in}\,\, \Omega_r\\
E_r&=&\,0 \,\,\,{\rm on}\,\,\partial \Omega_{r}\end{array}\right.\label{eqdynkin}
\end{equation}If $g^{\Omega_r}(x,y)=g(x,y) $ is the Green function of $\Omega_r$, with Dirichlet 
boundary data, then \[E_r(x)=\int_{\Omega_r}g(x,y)d\nu(y).\] Applying 
Cauchy-Schwarz and assuming that $m=2,3$ we obtain
\begin{eqnarray}\label{eq2.5}
\int_{\Omega_r}E_{r}^{2}(x)d\nu(x) 
&=&\int_{\Omega_r}\left(\int_{\Omega_r}g(x,y)d\nu(y)\right)^2d\nu(x)\nonumber \\
&\leq &{\rm 
vol}(\Omega_r)\cdot\int_{\Omega_r}\int_{\Omega_r}g^2(x,y)d\nu(y)d\nu(x)\\
&=&{\rm 
vol}(\Omega_r)\sum_{k=1}^\infty\frac{1}{\lambda_k(\Omega_r)^2}\cdot\nonumber
\end{eqnarray} 
Assume that $N=\mathbb{R}^{n}$, $p=o\in 
\mathbb{R}^{n}$. Let $\mathbf{\widetilde{E}}_r\colon B^{m}(0,r)\to \mathbb{R} $ be the mean exit time of the first 
exit from the geodesic ball $B^{m}(o,r)\subset \mathbb{R}^{m}.$ It 
is  known that  $\mathbf{\widetilde{E}}_r$ is radial, i.e.
 $\mathbf{\widetilde{E}}_r(y)=\mathbf{\widetilde{E}}_r(t (y))$, $t(y)=\vert y-o\vert$, $y\in \mathbb{R}^{m}$. Denote  by $\widetilde{E}_r $ the 
transplant of $\mathbf{\widetilde{E}}_r$ to $B^n(o,r)$, i.e.,  the 
 function  $\widetilde{E}_r \colon B^n(o,r)\subset \mathbb{R}^{n}\to \mathbb{R}$ 
defined by $\widetilde{E}_r(z)=\mathbf{\widetilde{E}}_r(t(z))$.
 Consider the restriction of $\widetilde{E}_r(z)$ to the immersion $\varphi(M)$, i.e. $x\in \Omega_r\to\widetilde{E}_{r}(\varphi (x))$.
In \cite{Mar}, Steen  Markvorsen proved that if the immersion  $\varphi \colon 
M\to \mathbb{R}^{m+1}$ is a minimal hypersurface then 
$E_r(x)=\widetilde{E}_{r}(\varphi (x))=\mathbf{\widetilde{E}}_{r}(t(\varphi (x)))$. 
Solving  problem \eqref{eqdynkin}, we have 
\begin{equation}\label{eq2.6}
\mathbf{\widetilde{E}}_{r}(s)=\int_s^r\frac{{\rm vol}(B^m(o,\zeta))}{{\rm 
vol}(\partial B^m(o,\zeta))}d\zeta=\frac{1}{2m}(r^2-s^2),
\end{equation}therefore, by inequality \eqref{eq2.5}, we have
\begin{equation*}
\begin{aligned}
{\rm vol}(\Omega_r)\sum_{k=1}^\infty\frac{1}{\lambda_k(\Omega_r)^2}\geq& 
\frac{1}{4m^2}\int_{\Omega_r}\left(r^4+t^4(x)-2r^2t^2(x)\right)d\nu(x), 
\end{aligned}
\end{equation*}where we identified $t(\varphi(x))=t(x)$. 
Applying co-area formula for the extrinsic distance function 
$t:M\to\mathbb{R}_+$ we obtain
\begin{equation*}
\begin{aligned}
\int_{\Omega_r}\left(r^4+t^4(x)-2r^2t^2(x)\right)d\nu(x)=&\int_0^r\left(\int_{
\partial \Omega_s}\frac{r^4+t^4(x)-2r^2t^2(x)}{\vert\nabla\, 
t\vert}dA\right)ds\\
=&\int_0^r(r^4+s^4-2r^2s^2)\left(\int_{\partial \Omega_s}\frac{1}{\vert\nabla\, 
t\vert}dA\right)ds\\
\geq& \int_0^r(r^4+s^4-2r^2s^2){\rm vol}(\partial \Omega_s)ds.
\end{aligned}
\end{equation*}
On the other hand,  we have that ${\rm vol}(\partial \Omega_s)\geq 
m\omega_ms^{m-1}$,  see \cite{Palmer}. Then
\begin{equation}
\begin{aligned}
\int_{\Omega_r}\left(r^4+t^4(x)-2r^2t^2(x)\right)dV(x)\geq&\, 
m\omega_m\int_0^r(r^4+s^4-2r^2s^2)s^{m-1}ds\\
=&\,m\omega_mr^{4+m}\left(\frac{1}{m}+\frac{1}{4+m}-\frac{2}{2+m}\right)\cdot
\end{aligned}
\end{equation}
In order to simplify the notation let us denote by 
$A_m:=1+\frac{m}{4+m}-\frac{2m}{2+m}$. Hence,
\begin{equation}\label{eq2.61}
\begin{aligned}
{\rm vol}(\Omega_r)\cdot \sum_{k=1}^\infty\frac{1}{\lambda_k(\Omega_r)^2}\geq& 
\frac{r^4}{4m^2}A_m\omega_mr^m.
\end{aligned}
\end{equation}
That proves the lower bound in \eqref{eq2.3Gimeno}. To prove the upper bound recall that Cheng, Li and 
Yau proved in \cite{CLY} that
\begin{equation}
\lambda_k(\Omega_r)\geq 4\pi\left(\frac{k}{e}\right)^{2/m}\frac{1}{{\rm 
vol}(\Omega_r)^{2/m}}\cdot
\end{equation}
Therefore,
\begin{eqnarray}\label{eq2.8}
\sum_
{k=1}^\infty\frac{1}{\lambda_k(\Omega_r)^2}&\leq &\frac{e^{4/m}}{16\pi^2}{\rm 
vol}(\Omega_r)^{4/m}\sum_{k=1}^\infty\frac{1}{k^{4/m}}\nonumber \\
&& \\
&=&\frac{e^{4/m}}{16\pi^2}{\rm vol}(\Omega_r)^{4/m}\zeta(4/m).\nonumber
\end{eqnarray} Observe that $\zeta(2)=\pi^{2}/6$. Putting together  inequalities \eqref{eq2.61} and \eqref{eq2.8} we obtain \[
A_m \cdot\omega_m\cdot\left(\frac{r^m}{{\rm vol}(\Omega_r)}\right)\cdot r^4\leq 
\sum_{k=1}^\infty\frac{1}{\lambda_k^2(\Omega_r)}\leq B_m\cdot\zeta(4/m)\cdot \left(\frac{{\rm 
vol}(\Omega_r)}{r^m}\right)^{4/m}\cdot r^4.
\]
In order to obtain inequality (\ref{eq:2.11}) we have by the monotonicity formula, see  \cite{MP-2012,Palmer}, that the function
$$
r\to \frac{{\rm vol}(\Omega_r)}{\omega_mr^m}
$$
is an increasing function. Moreover by the classical results of Jorge-Meeks in  \cite{JM}, see also \cite{anderson,Che4}, 
$$
\lim_{r\to\infty}\frac{{\rm vol}(\Omega_r)}{\omega_mr^m}=\mathcal{E}.
$$
Therefore
$$
{\rm vol}(\Omega_r)\leq \omega_m \mathcal{E}r^m
$$
and the theorem follows taking in consideration that \[ \mathcal{E}=\left\{\begin{array}{rll}\sum_{i=1}^{k}I_i& {\rm if}& m=2\\
k & {\rm if}& m=3,\end{array}\right.\] where $I_i$ is the geometric index of the end $E_i$, see details in  \cite{JM}.

\subsubsection{Proof of Theorem \ref{thmMain1-intro}}The metric on the geodesic ball 
$B(o,r)\subset \mathbb{M}_{h}^{n}$ is expressed, in polar coordinates, as 
$ds^2 =dt^{2}+h^{2}(t)d\theta^{2}$.
The Laplacian $\triangle$ of this metric is given  by \begin{eqnarray}\triangle (t,\theta)& = & 
\frac{\partial^{2}}{\partial t^{2}} + 
(n-1)\frac{h'}{h}(t)\frac{\partial}{\partial t} + 
\frac{1}{h^2}(t)\triangle_{\theta}\nonumber \\
&=& L_0 + \frac{1}{h^2}(t)\triangle_{\theta}\nonumber ,\end{eqnarray}
and $\triangle_{\theta}$ is the Laplacian on 
$\mathbb{S}^{n-1}$. Observe that the radial eigenvalues of $B(o,r)$ are the eigenvalues of the operator $L_0$ in the following eigenvalue problem.
%
%
%
%
\begin{equation}\label{eq2.19}\left\{\begin{array}{rll}
L_0 u+ \lambda u & = &0 \\
 u'(0)&=&0 \\
 u(r)&=&0.
\end{array} \right.
\end{equation}In order to study this eigenvalue problem, define the following  space of functions
\begin{equation}
\Lambda:=\left\{u\in W^{2}([0,r],\mu)\, :\,\lim_{t\to 0^+}u'(t)=0 \,\text{ and } u(0)=0\right\}
\end{equation}
 and the density  in $[0,r]$ given by $d\mu(t)=\omega_nh^{n-1}(t)dt$. Observe that
$f\in L^2([0,r],\mu)$ if and only if $t\circ f\in L^2(B(o,r))$. Define the bilinear form $\mathcal{E}_{bf}$ acting on Lipschitz functions \[\mathcal{E}_{bf}(f,g)=\int_{0}^{r} f'(s)g'(s)ds\] and let
$\mathcal{F}$ be the closure of $\Lambda$ in $L^{2}([0,r],\mu)$ with respect to the norm
\begin{equation}
\Vert f\Vert_{\mathcal{F}}^{2}=\Vert f\Vert_{L^{2}([0,r], \mu)}^{2}+ \mathcal{E}_{bf}(f,f)
\end{equation}
 The bilinear  form $\mathcal{E}_{bf}$  acting on $\mathcal{F}$, in the distributional sense, is a Dirichlet form, i.e. it  has the following properties.
\begin{enumerate}
\item \emph{Positivity}:  $\mathcal{E}_{bf}(f)=\mathcal{E}_{bf}(f,f)\geq 0$ for any $f\in \mathcal{F}$.
\item \emph{Closedness}: the space $\mathcal{F}$ is a Hilbert space with respect to the following product
\[\langle f, g\rangle=
(f,g)+\mathcal{E}_{bf}(f,g).
\]
\item \emph{The Markov property}: if $f\in \mathcal{F}$ then the function
\[
g:=\min\{1,\max\{f,0\}\}
\]
also belongs to $\mathcal{F}$ and $\mathcal{E}_{bf}(g)\leq \mathcal{E}_{bf}(f)$. Here we  used the  shorthand notation $\mathcal{E}_{bf}(f):=\mathcal{E}_{bf}(f,f)$.
\end{enumerate}Any Dirichlet form $\mathcal{E}_{bf}$  has a {\em generator} $\mathcal{L}$ which is a non-positive definite self-adjoint operator on $L^{2}([0,r], \mu)$ with domains $\mathcal{D}=\mathcal{D}(\mathcal{L})\subset \mathcal{F}$ such that 
\begin{equation}
\mathcal{E}_{bf}(f,g)=(-\mathcal{L}(f), g)
\end{equation} for $f\in \mathcal{D}$ and $g\in \mathcal{F}$ where $\mathcal{D}$ is dense in $\mathcal{F}$, see details in \cite[Sec.2.2]{Gri09}.

\begin{proposition}The operator
$\mathcal{L}\vert_{ \mathcal{D}}$ is an extension of $L_{0}\vert_{ \Lambda}$, this is,
\begin{equation}
\mathcal{L} f=L_0f, \,\,{\rm for}\,\,{\rm any}\,\,f \in \Lambda.
\end{equation}
\end{proposition}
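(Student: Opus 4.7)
The plan is to directly verify the defining identity of the generator: for any $f \in \Lambda$, show that
\begin{equation*}
\mathcal{E}_{bf}(f, g) = \bigl(-L_0 f,\, g\bigr)_{L^2([0,r], \mu)} \qquad \text{for every } g \in \mathcal{F}.
\end{equation*}
Since $\mathcal{L}$ is characterized by exactly this identity with $\mathcal{L} f$ in place of $L_0 f$, and $L_0 f \in L^2([0,r], \mu)$ whenever $f \in \Lambda$, such an equality forces $f \in \mathcal{D}$ and $\mathcal{L} f = L_0 f$.

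The main computation is an integration by parts, enabled by rewriting $L_0$ in weighted divergence form:
\begin{equation*}
L_0 u = u'' + (n-1)\frac{h'}{h} u' = \frac{1}{h^{n-1}} \bigl(h^{n-1} u'\bigr)'.
\end{equation*}
For $f \in \Lambda$ and $g \in \Lambda$ (viewed as a dense subspace of $\mathcal{F}$ with respect to $\Vert \cdot \Vert_{\mathcal{F}}$), pairing $-L_0 f$ against $g$ with respect to $d\mu = \omega_n h^{n-1} dt$ produces
\begin{equation*}
\bigl(-L_0 f,\, g\bigr)_{L^2(\mu)} = -\omega_n \int_0^r (h^{n-1} f')'\, g\, dt = -\omega_n \bigl[h^{n-1} f' g\bigr]_0^r + \omega_n \int_0^r h^{n-1} f'\, g'\, dt,
\end{equation*}
and the remaining integral agrees with $\mathcal{E}_{bf}(f,g)$.

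Next I would verify that both boundary contributions vanish. At $t = r$ the Dirichlet condition $g(r) = 0$ is inherited by every $g \in \mathcal{F}$ as the closure of $\Lambda$. At $t = 0$ the polar-coordinate singularity $h(0) = 0$ is compensated by the Neumann-type condition $\lim_{t \to 0^+} f'(t) = 0$ built into $\Lambda$: together with the Cauchy data $h(0) = 0$, $h'(0) = 1$, which gives $h(t) \sim t$ near the origin, one obtains $h^{n-1}(t)\, f'(t) \to 0$ as $t \to 0^+$, while $g$ remains bounded near $0$ by the one-dimensional Sobolev embedding applied to elements of $\mathcal{F}$. Once the identity is established for $g \in \Lambda$, continuity of both sides in $g$ with respect to $\Vert \cdot \Vert_{\mathcal{F}}$ extends it to all $g \in \mathcal{F}$, which completes the verification.

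The main obstacle I expect is the careful handling of the conical singularity at $t = 0$: both $L_0$ and $\mathcal{E}_{bf}$ degenerate there, and one needs the precise matching between the Neumann condition built into $\Lambda$ and the vanishing order of the weight $h^{n-1}$ to kill the boundary term produced by the integration by parts. Away from that point, everything reduces to a standard calculation followed by the abstract uniqueness characterization of the generator.
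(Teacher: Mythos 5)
Your proposal is correct and follows essentially the same route as the paper: both reduce the claim to the single integration by parts $\mathcal{E}_{bf}(f,g)=-\omega_n\bigl[h^{n-1}f'g\bigr]_0^r-\int_0^r g\,L_0f\,d\mu$ for $f,g\in\Lambda$, with the boundary terms killed by $g(r)=0$ and by $h^{n-1}(t)f'(t)\to 0$ at the origin. You are somewhat more explicit than the paper about the vanishing order of $h^{n-1}$ near $t=0$ and about extending the identity from $g\in\Lambda$ to all $g\in\mathcal{F}$ by density, but these are refinements of the same argument rather than a different one.
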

{\noindent Proof:}
For any $f,g\in \Lambda$
\begin{eqnarray}
(\mathcal{L}f,g)&=&-\mathcal{E}_{bf}(f,g)\nonumber \\
&=&-\omega_n\int_0^rf'(t)g'(t)h^{n-1}(t)dt\nonumber\\
&=&-\omega_n\int_0^r\left[\frac{d}{dt}\left(f'(t)g(t)h^{n-1}(t)\right)-g(t)\frac{d}{dt}\left(f'(t)h^{n-1}(t)\right)\right]dt\\
&=&-\omega_n\left[f'(t)g(t)h^{n-1}(t)\right]_0^r+\int_0^rg(t)L_0 f(t)d\mu(t)\nonumber\\
&=&(L_0f,g).\nonumber
\end{eqnarray} The generator $\mathcal{L}$ determines the heat semigroup $P_t=e^{-\mathcal{L}t},\,\,t\geq 0$ which posses a heat kernel $p(t,x,y)$ and Green function $g(x,y)=\int_{0}^{\infty}p(t,x,y)dt$. Observe that $\mathcal{L}\vert_{ \mathcal{D}}$ is a self-adjoint extension of $L_0\vert_{ \Lambda}$.  Thus, the solution of  eigenvalue problem \eqref{eq2.19} is an infinite sequence of eigenvalues $0< \lambda_{1}^{\rm rad} < \lambda_{2}^{\rm rad} < \cdots$, (the radial spectrum of $B(o,r)$).  Moreover, $L_0=\triangle_{\mu}$, then we have   by Theorem \ref{ThmMainA} that \begin{equation}\sum_{i=1}^{\infty}\frac{1}{\lambda_i^{\rm rad}(B(o,r))} = \int_{0}^{r} g(x,x)dx. \label{eq6.15}\end{equation}
We need to determine  the Green function $g(x,y)$ for the operator $\mathcal{L}$.
\begin{proposition}\label{prop-green-rad}The Green function $g(x,y)$ for the operator $\mathcal{L}$ is given by
\begin{equation}\label{green-rad1}
g(x,y)=\int_x^r \frac{1}{\omega_nh^{n-1}(t)}\theta_y(t)dt,
\end{equation}
where
\begin{equation}
\theta_y(t):=\begin{cases}
1\quad \text{ if }\quad t\geq y\\
0\quad \text{ if }\quad t<y.
\end{cases}
\end{equation}
Moreover,
\begin{equation}
G(f)(x)=\int_0^rg(x,y)f(y)d\mu(y)=T(f)(x).
\end{equation}Here $T$ is the operator defined in \eqref{T}.
\end{proposition}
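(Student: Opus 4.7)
\medskip

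\noindent\textbf{Proof plan for Proposition \ref{prop-green-rad}.}
The plan is to verify the claimed formula directly by (i) rewriting $g(x,y)$ and checking symmetry, (ii) computing $G(f)$ and matching it with $T(f)$ via Fubini, and (iii) checking that $T(f)$ solves the boundary value problem associated with $\mathcal{L}$.

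First, I observe that the indicator $\theta_y(t)$ simply restricts the integration range, so
\[
g(x,y)=\int_{\max(x,y)}^{r}\frac{dt}{\omega_n h^{n-1}(t)}.
\]
This expression is manifestly symmetric in $(x,y)$, as required for the Green function of a self-adjoint operator. Next, I would substitute into $G(f)(x)=\int_0^r g(x,y)f(y)\,d\mu(y)$ and interchange the order of integration. The region $\{(t,y):t\geq\max(x,y),\ 0\leq y\leq r,\ x\leq t\leq r\}$ becomes $\{x\leq t\leq r,\ 0\leq y\leq t\}$, giving
\[
G(f)(x)=\int_x^r\frac{1}{h^{n-1}(t)}\left(\int_0^t h^{n-1}(s)f(s)\,ds\right)dt=T(f)(x),
\]
which is exactly the operator from \eqref{T}. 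The use of Fubini is justified for $f\in L^2([0,r],\mu)$ since $g\geq 0$.

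For the second part, I would verify that $u:=T(f)$ satisfies $L_0 u=-f$ with $u(r)=0$ and $u'(0)=0$. Differentiating under the integral sign,
\[
u'(x)=-\frac{1}{h^{n-1}(x)}\int_0^x h^{n-1}(s)f(s)\,ds,
\]
so $u'(0)=0$ (the numerator vanishes faster than the denominator since $h(s)\sim s$ near $0$) and $u(r)=0$ by construction. Rewriting $h^{n-1}(x)u'(x)=-\int_0^x h^{n-1}(s)f(s)\,ds$ and differentiating once more,
\[
\bigl(h^{n-1}u'\bigr)'(x)=-h^{n-1}(x)f(x),
\]
i.e.\ $h^{n-1}(x)\bigl[u''(x)+(n-1)\tfrac{h'}{h}(x)u'(x)\bigr]=-h^{n-1}(x)f(x)$, which is precisely $L_0 u=-f$.

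Finally, the preceding proposition identifies $\mathcal{L}|_{\mathcal{D}}$ as an extension of $L_0|_{\Lambda}$, and the hypothesis $\lambda_1^{\mathrm{rad}}(B(o,r))>0$ guarantees that $\mathcal{L}$ is invertible on $L^2([0,r],\mu)$. Hence the operator $f\mapsto T(f)$ is a right inverse of $-\mathcal{L}$ whose kernel is $g(x,y)$, and by uniqueness of the inverse $g$ is the Green function, $G=\mathcal{L}^{-1}=T$. The only point that requires care is the behavior at $t=0$, where $h^{n-1}$ vanishes; but this is harmless because the integrand in $g(x,y)$ is only evaluated for $t\geq\max(x,y)>0$ whenever $x>0$, and the boundary condition $u'(0)=0$ replaces the (formally singular) Dirichlet condition at the origin.
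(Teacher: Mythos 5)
Your proposal is correct and follows essentially the same route as the paper: both establish $G(f)=T(f)$ by Fubini and then verify $L_0\circ T=-\mathrm{id}$ together with the boundary conditions $u(r)=0$, $u'(0)=0$ by direct differentiation of the iterated integral. The only (shared, minor) point left implicit in both arguments is that $T$ maps into the domain of the self-adjoint extension $\mathcal{L}$, so that the right-inverse property upgrades to $T=G=(-\mathcal{L})^{-1}$.
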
  
\noindent{Proof:}
We need   to prove that $g(r,y)=0$, $\lim_{x\to 0^+}\displaystyle\frac{\partial}{\partial x}g(x,y)=0$, $\mathcal{L}_{x}(x,y)=0$ for $x\neq y$ and
 $\mathcal{L}g(x,\,\cdot)=-\delta_x$.
The two first properties are straight forward from equation (\ref{green-rad1}). When $x\neq y$ we have 
\begin{equation}
g(x,y)=\left\{\begin{array}{lll}
\displaystyle\int_x^r \frac{1}{\omega_nh^{n-1}(t)}dt&\text{ if }&  x\geq y\\
&&\\
\displaystyle\int_y^r \frac{1}{\omega_nh^{n-1}(t)}dt& \text{ if }& x\leq y.
\end{array}\right.
\end{equation}  
Then,
\begin{equation}
\mathcal{L}_xg(x,y)=\left\{\begin{array}{cll}
{L_0}\left(\displaystyle\int_x^r \frac{1}{\omega_nh^{n-1}(t)}dt\right)=0& \text{ if }& x\geq y\\
0&\text{ if }& y\leq x.
\end{array}\right.
\end{equation}The last requirement
$\mathcal{L}g(x,\,\cdot)=-\delta_x$ is proven  now.  We have to show that if $f\in \mathcal{F}$ then 
\begin{eqnarray}
 -\int_0^r\mathcal{L}_xg(x,y)f(y)d\mu(y)
&=&-\mathcal{L}_x\int_0^rg(x,y)f(y)d\mu(y)\\
&=&-\mathcal{L}_xG(f)(x)\nonumber \\
&=&f(x).\nonumber
\end{eqnarray}On the other hand,
\begin{eqnarray}G(f)(x)&=&\int_0^rg(x,y)f(y)d\mu(y)\nonumber \\
&=&\int_0^r\left(\int_x^r\frac{\theta_y(t)}{h^{n-1}(t)}dt\right)f(y)h^{n-1}(y)dy \nonumber
\\
&=&\int_x^r\frac{1}{h^{n-1}(t)}\left(\int_0^t\theta_y(t)f(y)h^{n-1}(y)dy+\int_t^r\theta_y(t)f(y)h^{n-1}(y)dy\right)dt \nonumber\\
&=&\int_x^r\frac{1}{h^{n-1}(t)}\left(\int_0^tf(y)h^{n-1}(y)dy\right)dt\nonumber\\
&=&T(f)(x).\nonumber
\end{eqnarray}Hence we have to prove that $\mathcal{L}\circ T=-\text{id}$. Since $T\colon\Lambda\to\Lambda$, for any $u\in \Lambda$

\begin{eqnarray}\mathcal{L} \circ T (u)&=&D_h^2\circ T (u)=\frac{\partial^{2}T(u)}{\partial t^{2}} + (n-1)\frac{h'}{h}(t)\frac{\partial T(u)}{\partial t}\cdot\label{eq10}  \end{eqnarray}However, 
 \[\begin{array}{rll}\displaystyle\frac{\partial^{2}T(u)}{\partial t^{2}}(t)&=&(n-1)\displaystyle \frac{h'(t)}{h(t)}\frac{1}{h^{n-1}}\int_{0}^{t}h^{n-1}(s)u(s)ds - u(t)\\
 &&\\
 \displaystyle(n-1)\frac{h'(t)}{h(t)}\cdot \frac{\partial T(u)}{\partial t}(t)&=&-\displaystyle (n-1)\frac{h'(t)}{h(t)}\frac{1}{h^{n-1}(t)}\int_{0}^{t}h^{n-1}(s)u(s)ds.
 \end{array}\]Then
 $\mathcal{L} \circ T (u)(x)=-u(x)$
and the proposition follows.
To prove Theorem \ref{thmMain1-intro} we have from \ref{eq6.15} that  \begin{eqnarray}\sum_{i=1}^{\infty}\frac{1}{\lambda_i^{\rm rad}(B(o,r))}&= &\int_{0}^{r} g(x,x)d\mu (x)\nonumber \\
&=& \int_{0}^{r}\left( \int_{x}^{r} \frac{dt}{h^{n-1}(t)}dt\right) h^{n-1}(x)dx\nonumber \\
&=& \int_0^r\frac{\int_0^xh^{n-1}(t)dt}{h^{n-1}(x)}dx\\
&=& \int_{0}^{r} \frac{V(s)}{S(s)}ds,\nonumber\label{eq6.23}
\end{eqnarray}This proves identity \eqref{eqMain1-intro}. If 
$\mathbb{M}_{h}^{n}$ is stochastically incomplete, then  its spectrum is discrete, say $\sigma (\mathbb{M}_{h}^{n})=\{ \lambda_1(\mathbb{M}_{h}^{n}) < \lambda_2(\mathbb{M}_{h}^{n}) \leq \cdots\}$. Taking the limits in \eqref{eq6.23} we obtain \[ \lim_{r\to \infty} \sum_{i=1}^{\infty}\frac{1}{\lambda_i^{\rm rad}(B(o,r))}=\int_{0}^\infty \frac{V(s)}{S(s)}ds<\infty.\]  To prove identity \eqref{eqMain2-intro} we  recall that $\lambda^{\rm rad}_{i}(\mathbb{M}_{h}^{m})=\lim_{r\to \infty}\lambda^{\rm rad}_{i}(B(o,r))$. 
%
%
This proves that
  \[ \sum_{i=1}^{\infty}\frac{1}{\lambda_i^{\rm rad}(\mathbb{M}^{n}_{h})}=\int_{0}^\infty \frac{V(s)}{S(s)}ds<\infty.\]  

\subsubsection{Proof of Theorem \ref{thmMain2}}The proof of Theorem \ref{thmMain2} is closed to the proof of Theorem \ref{thmMain1-intro}.  The spectrum of  $B(o,r)$, without repetitions, is the union of   the $\nu_{l}$-spectrums $\sigma^{l}(B(o,r))$, $l=0, 1, \ldots$ \[\sigma (B(o,r))=\cup_{l=0}^{\infty} \sigma^{l}(B(o,r))=\{ \lambda_{l,j}\}_{l=0,j=1}^{\infty,\,\, \infty},\]each $\lambda_{l,i}$ with multiplicity $\delta(l,m)$. The eigenvalues of the $l$-spectrum $\sigma^{l}(B(o,r))$, $l\geq 1$,  are the eigenvalues of the the operator
\[L_l(T)(t)=T''(t)+(n-1) \displaystyle\frac{1}{t}T'(t) - \displaystyle\frac{\nu_l}{t^2}T(t),\] $\nu_l=l(l+m-2)$, in the following Dirichlet eigenvalue problem on $[0,r]$. 
\begin{equation}
\begin{array}{lll}
T''+(m-1) \displaystyle\frac{1}{t}T' + (\lambda - \displaystyle\frac{\nu_l}{t^2})T=0&{\rm in}& [0,r] \end{array}
\end{equation} with initial conditions
 $ T(t)\sim c\cdot t^l$  as $ t\to 0$  when $l=1,2\ldots$ and $T(r)=0$. The procedure to show that the operator $L_l$ has a self-adjoint extension $\mathcal{L}_{l}$ and a Green function $g_{_l}$ is similar to the procedure in the proof of Theorem \ref{thmMain1-intro}. By Theorem \ref{ThmMainA} we have that 
 \begin{equation}\sum_{i=1}^{\infty}\frac{1}{\lambda_{l,i}(B(o,r))} = \int_{0}^{r} g_{_l}(x,x)dx. \label{eq6.151}\end{equation} We need to find the Green function $g_{_l}$.   \begin{proposition}The Green function $g_{_l} (x,y)$ for the $\mathcal{L}_l$ operator on  $M=[0,r]$  with density $d\mu(x)=\omega_nx^{n-1}dx$ boundary conditions
$$
u'(0)=u(r)=0,\quad {\rm with }\, \,u(x)=g(x,y)\,{\rm for}\,{\rm any }\,\,  y\in (0,r)
$$ 
is given by
\begin{equation}
g(x,y)=\left\{
\begin{array}{lcc}
\displaystyle\frac{x^{l}y^{\alpha}}{\beta\omega_ny^{n-1}}\left(\frac{1}{y^\beta}-\frac{1}{r^\beta}\right)&{\rm if }&0\leq x<y\\
&\\
\displaystyle\frac{x^{l}y^{\alpha}}{\beta\omega_ny^{n-1}}\left(\frac{1}{x^\beta}-\frac{1}{r^\beta}\right)&{\rm if }&y\leq x\leq r\\
\end{array}\right.
\end{equation}
with $\alpha=l+n-1$ and $\beta=2l+n-2$.
\end{proposition}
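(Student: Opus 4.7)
The plan is to mirror the construction used in the radial case (Proposition \ref{prop-green-rad}): produce the Green function explicitly via the standard Sturm--Liouville recipe and then verify the four defining properties (the homogeneous equation away from the diagonal, the two boundary conditions, continuity at $x=y$, and the distributional delta on the right-hand side against the weighted measure $d\mu(x)=\omega_n x^{n-1}dx$).

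First I would identify the kernel of $L_l$. The equation
\[
u'' + \frac{n-1}{t}\,u' - \frac{l(l+n-2)}{t^2}\,u = 0
\]
is of Cauchy--Euler type; plugging in the ansatz $u(t)=t^p$ gives the indicial equation $p^2+(n-2)p-l(l+n-2)=0$ with roots $p_+=l$ and $p_-=-(l+n-2)$. Thus a basis of solutions is $\{t^l,\,t^{-(l+n-2)}\}$ and the gap of exponents is exactly $\beta=p_+-p_-=2l+n-2$, which will emerge as the denominator in the stated formula. On $[0,y)$ I take the branch $u_1(x)=x^l$ exhibiting the prescribed behavior at the origin (the other branch blows up), and on $(y,r]$ I take the combination vanishing at $r$, namely $u_2(x)=x^{-(l+n-2)}-r^{-\beta}x^{l}$. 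Then I write
\[
g_l(x,y) = \begin{cases} A(y)\,u_1(x), & 0 \le x \le y,\\ B(y)\,u_2(x), & y \le x \le r,\end{cases}
\]
and determine $A(y)$, $B(y)$ from continuity at $x=y$ together with the jump relation
\[
\partial_x g_l(y^+,y) - \partial_x g_l(y^-,y) = -\frac{1}{\omega_n\,y^{n-1}},
\]
which is exactly the prescription that forces $\mathcal{L}_l\,g_l(\cdot,y)=-\delta_y$ in the sense of distributions against $d\mu$.

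A direct Wronskian computation using $u_1'=l y^{l-1}$ and $u_2'=-(l+n-2)y^{-(l+n-1)}-r^{-\beta}\,l\,y^{l-1}$ yields $W(y)=u_1(y)u_2'(y)-u_1'(y)u_2(y)=-\beta\,y^{1-n}$, where the $r^{-\beta}$ contributions cancel exactly. Solving the $2\times 2$ linear system for $A(y),B(y)$ and regrouping the powers of $y$ reproduces the proposition's formula verbatim, with $\alpha=l+n-1$ absorbing the $y^l\cdot y^{n-1}$ combination and the prefactor $1/(\beta\,\omega_n\,y^{n-1})$ arising as $1/(\omega_n y^{n-1}W(y))$ from the standard Green function recipe. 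To close the argument, following the proof of Proposition \ref{prop-green-rad}, I would then check that $G_l(f)(x)=\int_0^r g_l(x,y)f(y)\,d\mu(y)$ satisfies $\mathcal{L}_l\circ G_l=-\mathrm{id}$ on $\Lambda$, which amounts to a Fubini interchange and two differentiations under the integral sign. The main obstacle is purely computational bookkeeping: one must track the weighted measure $\omega_n x^{n-1}dx$ consistently so that the normalization $1/(\beta\,\omega_n\,y^{n-1})$ appears with the correct sign in the jump calculation. Once the Wronskian is computed and the normalization fixed, the remaining verifications $g_l(r,y)=0$, $\partial_x g_l(x,y)|_{x=0}=0$, and $L_l\,g_l(\cdot,y)=0$ on each subinterval are immediate from the construction.
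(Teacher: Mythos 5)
Your construction is correct: the indicial roots $p_+=l$, $p_-=-(l+n-2)$, the exponent gap $\beta=2l+n-2$, the Wronskian $W(y)=-\beta y^{1-n}$ (with the $r^{-\beta}$ terms cancelling), and the normalization $g_l(x,y)=u_1(x_\wedge)u_2(x_\vee)/(\beta\omega_n)$ all check out and reproduce the stated kernel. The route differs from the paper's in one respect worth noting: you \emph{derive} the formula by the classical Sturm--Liouville recipe, establishing $\mathcal{L}_l g_l(\cdot,y)=-\delta_y$ through the jump condition $\partial_x g_l(y^+,y)-\partial_x g_l(y^-,y)=-1/(\omega_n y^{n-1})$, whereas the paper simply posits the kernel, observes $\mathcal{L}_l(x^l)=\mathcal{L}_l(x^{l-\beta})=0$, and then verifies $\mathcal{L}_l\circ G_l=-\mathrm{id}$ by expanding $G_l(f)$ into three explicit integrals and differentiating under the integral sign (using $l+\alpha-\beta-1=0$ to make the boundary terms collapse to $-f(x)$). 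Your approach explains \emph{why} the constants $\alpha$, $\beta$ and the prefactor take the form they do, which the paper's verification leaves opaque; the paper's computation has the advantage of directly exhibiting $G_l=T_l$-type integral formulas that are reused in the subsequent evaluation of $\sum_i\lambda_{l,i}^{-1}$ and $\sum_i\lambda_{l,i}^{-2}$. One small caution: the boundary condition at the origin is genuinely $u'(0)=0$ only for $l=0$ and $l\geq 2$; for $l=1$ the regular branch $x^l=x$ has nonzero derivative at $0$, and the correct statement (as in the paper's equation \eqref{eqseq0}) is the asymptotic condition $T(t)\sim c\,t^l$, i.e.\ selecting the branch that does not blow up --- which is exactly the selection you make, so your argument is unaffected.
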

\begin{proof}
Observe first of all that
$$
u'(0)=\left.\frac{\partial}{\partial x}g(x,y)\right\vert_{x=0}=0,\quad u(r)=g(r,y)=0,\quad\forall y\in [0,r].
$$
On the other hand 
$$
\mathcal{L}_l(u(x))=u''(x)+(n-1)\frac{u'(x)}{x}-\frac{l(l+n-2)u(x)}{x^2}=0
$$
because
$$
\mathcal{L}_l(x^l)=\mathcal{L}_l(x^{l-\beta})=0.
$$
For any function $f\colon[0,r]\to \mathbb{R}$, we have then
\begin{eqnarray}
G_{_l}(f)(x)&=&\int_0^rg_{_l}(x,y)f(y)d\mu(y)\nonumber \\
&& \nonumber \\
&=&\int_0^xg_{_l}(x,y)f(y)d\mu(y)+\int_x^rg_{_l}(x,y)f(y)d\mu(y)\nonumber \\
&& \nonumber \\
&=&\frac{x^l}{\beta}\left(\frac{1}{x^\beta}-\frac{1}{r^\beta}\right)\int_0^xy^\alpha f(y)dy+\frac{x^l}{\beta}\int_x^ry^\alpha\left(\frac{1}{y^\beta}-\frac{1}{r^\beta}\right)f(y)dy \nonumber\\
&& \nonumber \\
&=&\frac{x^{l-\beta}}{\beta}\int_0^xy^\alpha f(y)dy+\frac{x^l}{\beta}\int_x^ry^{\alpha-\beta} f(y)dy-\frac{x^l}{\beta r^\beta}\int_0^ry^\alpha f(y)dy.\nonumber\\
&& \nonumber \\
{\rm and}&&\nonumber\\
&& \nonumber \\
\mathcal{L}_l\left(G_{_l}(f)(x)\right)&=&\mathcal{L}_l\left(\frac{x^{l-\beta}}{\beta}\int_0^xy^\alpha f(y)dy\right)+\mathcal{L}_l\left(\frac{x^l}{\beta}\int_x^ry^{\alpha-\beta} f(y)dy\right)\nonumber \\
&& \nonumber \\
&&-\mathcal{L}_l\left(\frac{x^l}{\beta r^\beta}\right)\int_0^ry^\alpha f(y)dy\nonumber \\
&& \nonumber \\
&=&\mathcal{L}_l\left(\frac{x^{l-\beta}}{\beta}\right)\int_0^xy^\alpha f(y)dy+\mathcal{L}_l\left(\frac{x^l}{\beta}\right)\int_x^ry^{\alpha-\beta} f(y)dy\nonumber \\
&& \nonumber \\
&&-x^{l+\alpha-\beta-1}f(x)\nonumber \\
&& \nonumber \\
&=&-f(x),\nonumber
\end{eqnarray}where we have applied $l+\alpha-\beta-1=0$. Therefore  we conclude that, $$
\mathcal{L}_l\circ G_{_l}(f)(x)=-f(x),
$$
and $g_{_l}$ is a Green function for our problem. Now\begin{eqnarray}
\sum_{i=1}^\infty\frac{1}{\lambda_{l,i}}&= &\int_0^rg_{_l}(x,x)d\mu(x)=\int_0^r\frac{x^{l+\alpha}}{\beta}\left(\frac{1}{x^\beta}-\frac{1}{r^\beta}\right)dx\nonumber \\
&=&\frac{r^{l+\alpha-\beta+1}}{(l+\alpha+1)(l+\alpha-\beta+1)}\nonumber \\
&=&\frac{r^2}{2(2l+n)}=\left(\frac{1}{1+2\frac{l}{n}}\right)\frac{r^2}{2n}\nonumber \\
&=&\left(\frac{1}{1+2\frac{l}{n}}\right)\max_{x\in B_{\mathbb{R}^n}(r)}E_r(x).\nonumber
\end{eqnarray}This proves \eqref{eqMain3}.
 
\noindent To prove \eqref{eqMain4} we proceed as follows.
\begin{eqnarray}
\sum_{i=1}^\infty\frac{1}{\left(\lambda_{l,i}\right)^2}&=&\int_0^r\int_0^rg_{_l}(x,y)g_{_l}(y,x)d\mu(y)d\mu(x)\nonumber\\
&=&\int_0^r\int_0^r\omega_n^2x^{n-1}y^{n-1}g_{_l}(x,y)g_{_l}(y,x)dydx\nonumber \\
&=&\int_0^r\omega_n^2x^{n-1}\left(\int_0^xg_{_l}(x,y)g(y,x)y^{n-1}dy+\int_x^rg(x,y)g_{_l}(y,x)dy\right)dx \nonumber \\
&=&\frac{1}{\beta^2}\int_0^r\!\!\left(\int_0^x\!\!x^{l+\alpha}y^{l+\alpha}\!\left(\!\frac{1}{x^\beta}\!-\!\frac{1}{r^\beta}\right)^2\!\!\!dy\!+\!\!\int_x^r\!\!\!x^{l+\alpha}y^{l+\alpha}\left(\frac{1}{y^\beta}-\frac{1}{r^\beta}\right)^{\!2}\!\!\!dy\!\right)\!dx\nonumber\\
&=& \frac{r^{2(l+\alpha-\beta+1)}}{(\alpha+l+1)^2(\alpha+l-\beta+1)(2+2\alpha-\beta+2l)}\nonumber\\
&=&\frac{r^4}{2(2l+n)^2(2+2l+n)}\nonumber 
\end{eqnarray}
\end{proof}

\subsubsection{Proof of Theorem \ref{Main2}}Let $(\Omega, ds^2, \mu)$ be a weighted bounded open  subset, with smooth boundary $\partial 
\Omega \neq \emptyset$, of a Riemannian weighted $m$-manifold $(M,ds^2, \mu)$.  The Green operator $G\colon L^2(\Omega, \mu)\to 
L^{2}(\Omega, \mu)$ is given by 
\begin{eqnarray}G(f)(x)&=&\int_{0}^{\infty}\int_{\Omega}p_t(x,y)f(y)d\mu(y)dt\nonumber \\
&=&\int_{\Omega} g(x,y)f(y)dy,\end{eqnarray}where $p_t(x,y)$ is the heat kernel of the operator $\mathcal{L}=-\triangle_{\mu}\vert_{W_{0}^{2}(\Omega, \mu)}$ and $g(x,y)$ is the Green function of $\Omega$. Let $\{u_1, u_2,u_3,  \ldots\} $ be a $L^{2}(\Omega, \mu)$-orthonormal basis of $L^2(\Omega, \mu)$ formed by eigenfunctions $u_i$ with eigenvalue $\lambda_i(\Omega)\in \sigma (\Omega)$.  The proof Theorem \ref{Main2} is divided in few simple propositions.
\begin{proposition} If $\lambda_{1}(\Omega)>0$ then the Green operator $G$ satisfies
\begin{equation}\label{eq43}
G(f)(x)=\sum_{i=1}^\infty\frac{f^i\,u_i(x)}{\lambda_i(\Omega)}\,\,{\rm for}\,\,{\rm any}\,\, f\in L^{2}(\Omega, \mu),
\end{equation} 
where $f^i=\int_\Omega f(x)u_i(x)d\mu(x)$. Moreover,
\begin{equation}\label{eq42}
G^k(f)=\sum_{i=1}^\infty\frac{f^i\,u_i(x)}{\lambda_i^k(\Omega)},
\end{equation}
where 
$G^{k}=\stackrel{k-\text{times}}{\overbrace{G\circ \cdots \circ G}}$.
\end{proposition}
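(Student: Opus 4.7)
The plan is to work in the orthonormal eigenbasis $\{u_i\}_{i=1}^\infty$ of $L^2(\Omega,\mu)$ and exploit that $G$ is a bounded inverse of $\mathcal L$. First I would write any $f\in L^2(\Omega,\mu)$ as its Fourier series $f=\sum_{i=1}^\infty f^i u_i$, where $f^i=\int_\Omega f\,u_i\,d\mu$, the convergence being in $L^2(\Omega,\mu)$ by the completeness of $\{u_i\}$. Since each $u_i\in W_0^2(\Omega,\mu)$ satisfies $\mathcal L u_i=\lambda_i u_i$ with $\lambda_i>0$ (because $\lambda_1>0$), one has $G u_i=\mathcal L^{-1}u_i=u_i/\lambda_i$.

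Next I would justify applying $G$ term by term. Because $\lambda_1>0$ is the smallest eigenvalue and $G$ is self-adjoint with spectrum $\{1/\lambda_i\}$, the operator norm satisfies $\|G\|_{L^2\to L^2}=1/\lambda_1<\infty$. Boundedness of $G$ then allows interchanging it with the $L^2$-convergent series:
\begin{equation*}
G(f)=G\Bigl(\sum_{i=1}^\infty f^i u_i\Bigr)=\sum_{i=1}^\infty f^i\,G(u_i)=\sum_{i=1}^\infty\frac{f^i\,u_i}{\lambda_i(\Omega)},
\end{equation*}
with convergence in $L^2(\Omega,\mu)$. This proves \eqref{eq43}. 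A sanity check: Parseval gives $\|G(f)\|_2^2=\sum_i(f^i)^2/\lambda_i^2\le \|f\|_2^2/\lambda_1^2<\infty$, confirming $G(f)\in L^2(\Omega,\mu)$.

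Finally, \eqref{eq42} follows by induction on $k$. For $k=1$ it is \eqref{eq43}. Assuming $G^{k-1}(f)=\sum_i f^i u_i/\lambda_i^{k-1}$ in $L^2$, apply $G$ once more and use its boundedness together with $G(u_i)=u_i/\lambda_i$ to obtain $G^k(f)=\sum_i f^i u_i/\lambda_i^k$. Alternatively, one can compute the Fourier coefficients of $G^k(f)$ directly using self-adjointness: $\langle G^k(f),u_j\rangle=\langle f,G^k u_j\rangle=f^j/\lambda_j^k$, which together with Parseval gives the same expansion.

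The only mild obstacle is the interchange of $G$ with the infinite sum; this is routine once one observes that $\lambda_1(\Omega)>0$ makes $G$ a bounded operator on $L^2(\Omega,\mu)$, so no delicate convergence argument is required.
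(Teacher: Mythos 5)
Your proof is correct and follows essentially the same strategy as the paper: expand $f$ in the eigenbasis, use $G(u_i)=u_i/\lambda_i$, and justify passing $G$ through the $L^2$-convergent series via the bound $\Vert G\Vert\leq 1/\lambda_1$ (the paper phrases this as the tail estimate $\Vert G(f_k)\Vert\leq\Vert f_k\Vert/\lambda_1\to 0$, which is the same boundedness argument), with induction handling $G^k$. One small remark: rather than asserting that the spectrum of $G$ is $\{1/\lambda_i\}$ (which comes close to presupposing the expansion you are proving), it is cleaner to derive $\Vert G\Vert\leq 1/\lambda_1$ directly from $G=\mathcal L^{-1}$ and $\mathcal L\geq\lambda_1 I$, as the paper does.
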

\begin{proof}Let $\{u_i\}_{i=1}^{\infty}$ be a complete orthonormal basis of $L^2(\Omega)$ of eigenfunctions. For any $f\in L^2(\Omega, \mu)$ we have \[f(x)=\sum _{i=1}^{\infty}f^{i}u_i(x)\,\,{\rm in}\,\,L^{2}-{\rm sense}.\] This means that $\Vert f - \sum_{i=1}^{k} f^{i}u_i\Vert_{L^2}\to 0$ as $k \to \infty$. Set \[f_k(x)=\sum_{k+1}^{\infty}f^{i}u_i(x).\] Since $\sum_{i=k+1}^{\infty}\left(f^{i}\right)^2<\infty$ we have then $f_k\in L^{2}(\Omega, \mu)$. Thus $$U^k=f-f_k=\sum_{i=1}^{k}f^{i}u_i(x)\in L^{2}(\Omega, \mu).$$ Therefore, \begin{equation}\label{eqGimeno} \left\Vert f-\sum_{i=1}^{k}f^{i}u_i(x)\right\Vert_{L^{2}}=\Vert f-U^{k}\Vert=\Vert f_k\Vert_{L^{2}}\to 0\,\,{\rm as}\,\, k\to \infty.\end{equation}
For any $k\in \mathbb{N}$,
\begin{eqnarray}G(f)(x)&=& \int_{\Omega} g(x,y)f(y)d\mu(y)\nonumber \\
&=& \int_{\Omega}g(x,y)U^{k}(y)d\mu(y) +  \int_{\Omega}g(x,y)f_k(y)d\mu(y)\nonumber \\
&=& \int_{\Omega}g(x,y)\sum_{i=1}^{k}f^{i}u_i(y)d\mu(y)+  \int_{\Omega}g(x,y)f_k(y)d\mu(y)\label{eqGimeno2} \\
&=&\sum_{i=1}^{k}f^{i}\int_{\Omega}g(x,y)u_i(y)d\mu(y) + G(f_k)(x)\nonumber \\
&=&\sum_{i=1}^{k}\frac{f^{i}u_i(x)}{\lambda_i(\Omega)}+ G(f_k)(x).\nonumber
\end{eqnarray} Therefore, from \eqref{eqGimeno2} and \cite[Exercise 13.6]{grigoryan-book} we  have
\begin{equation}
\left\Vert G(f)-\sum_{i=1}^{k}\frac{f^{i}u_{i}}{\lambda_{i}(\Omega)}\right\Vert_{L^{2}}=\Vert G(f_k)\Vert_{L^{2}}
\leq \frac{\Vert f_k\Vert_{L^{2}}}{\lambda_{1}(\Omega)}\to 0 \,\, {\rm as}\,\, k\to \infty.
\end{equation}This proves \eqref{eq43}.
We used that $\int_{\Omega} g(x,y)u_i(y)d\mu (y)=G(u_{i})(x)=u_i(x)/\lambda_{i}(\Omega)$.
We will prove \eqref{eq42} by induction. Assume  that \[
G^k(f)(x)=\sum_{i=1}^\infty\frac{f^i\,u_i(x)}{\lambda_i^k(\Omega)}\in L^{2}(\Omega).
\]Then, since $(G^{k}(f))^i=\displaystyle\frac{f^i}{\lambda_i^k(\Omega)}$, we have \begin{eqnarray} G^{k+1}(f)(x)&=&G(G^{k}(f))(x)\nonumber \\
&=& \sum_{i=1}^\infty\frac{(G^{k}(f))^i\,u_i(x)}{\lambda_i(\Omega)}\\
&=&\sum_{i=1}^\infty\frac{f^i\,u_i(x)}{\lambda_i^{k+1}(\Omega)}\cdot\nonumber
\end{eqnarray}
\end{proof}

\begin{proposition}\label{propl1}Let $\Omega\subset M$ be a bounded open set  with smooth boundary $\partial \Omega$ such that $\lambda_{1}(\Omega)>0$. Then, for any $f\in L^2(\Omega)$,
\begin{equation}\label{eq45}
\Vert G^k(f) \Vert^2=\sum_{i=1}^\infty\frac{(f^i)^2}{\lambda_i^{2k}(\Omega)}\cdot
\end{equation}
Moreover,
\begin{equation}
\lim_{k\to\infty}\frac{\Vert G^k(f) \Vert}{\Vert G^{k+1}(f) \Vert}=\lambda_l(\Omega),
\end{equation}
\begin{equation}
\lim_{k\to \infty}\frac{G^k(f)}{\Vert G^{k}(f) \Vert}\to \phi_l\in {\rm Ker}(\triangle_{\mu}+\lambda_l)\,\,{\rm in }\,\, L^2,
\end{equation}
where $l$ is the first integer such that,
\begin{equation}
  \int_\Omega f(y)u_l(y)d\mu(y)\neq 0.
\end{equation}
\end{proposition}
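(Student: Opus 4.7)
The plan is to leverage the spectral representation \eqref{eq42} of $G^k(f)$ established in the previous proposition and reduce everything to elementary manipulations of the Fourier coefficients.

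First I would derive \eqref{eq45} directly from \eqref{eq42}: since $\{u_i\}$ is a complete orthonormal basis of $L^2(\Omega,\mu)$ and the $i$-th Fourier coefficient of $G^k(f)$ is $f^i/\lambda_i^k(\Omega)$, Parseval's identity gives
\begin{equation*}
\Vert G^k(f)\Vert^2 = \sum_{i=1}^\infty \frac{(f^i)^2}{\lambda_i^{2k}(\Omega)}.
\end{equation*}

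Next, to establish the limit of the ratio, I would isolate the slowest-decaying terms. Let $l$ be the first index with $f^l\neq 0$ and let $l'\geq l$ be the largest index with $\lambda_{l'}(\Omega)=\lambda_l(\Omega)$. Factor out $\lambda_l^{-2k}$ in \eqref{eq45} to write
\begin{equation*}
\lambda_l^{2k}(\Omega)\,\Vert G^k(f)\Vert^2 = \sum_{i=l}^{l'}(f^i)^2 + \sum_{i>l'}(f^i)^2\left(\frac{\lambda_l(\Omega)}{\lambda_i(\Omega)}\right)^{2k}.
\end{equation*}
The first sum is a positive constant $C^2>0$ because $f^l\neq 0$. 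For the second sum, each summand is dominated by $(f^i)^2$ (a summable majorant coming from $f\in L^2$) and tends to $0$ pointwise because $\lambda_i(\Omega)>\lambda_l(\Omega)$ for $i>l'$; the dominated convergence theorem for series then yields $\lambda_l^{2k}(\Omega)\Vert G^k(f)\Vert^2\to C^2$. Taking the ratio for two successive values of $k$ and extracting square roots gives $\Vert G^k(f)\Vert/\Vert G^{k+1}(f)\Vert\to \lambda_l(\Omega)$.

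For the convergence of the normalized iterates, I would analyze $\lambda_l^k(\Omega)\,G^k(f)$ by exactly the same splitting:
\begin{equation*}
\lambda_l^k(\Omega)\,G^k(f) = \sum_{i=l}^{l'} f^i u_i + \sum_{i>l'} f^i u_i\left(\frac{\lambda_l(\Omega)}{\lambda_i(\Omega)}\right)^{k}.
\end{equation*}
A second appeal to dominated convergence (this time in $L^2$, using orthonormality to compute the squared $L^2$-norm of the tail) shows that $\lambda_l^k(\Omega)\,G^k(f)\to \phi_l:=\sum_{i=l}^{l'}f^i u_i$ in $L^2(\Omega,\mu)$. Combined with $\lambda_l^k(\Omega)\Vert G^k(f)\Vert\to\Vert\phi_l\Vert=C$, this yields $G^k(f)/\Vert G^k(f)\Vert\to \phi_l/\Vert\phi_l\Vert$ in $L^2$. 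Since $\phi_l$ is a finite linear combination of eigenfunctions all belonging to the eigenvalue $\lambda_l(\Omega)$, its normalization lies in $\mathrm{Ker}(\triangle_\mu+\lambda_l)$.

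The argument is essentially bookkeeping; the only subtle point is carefully handling eigenvalue multiplicities, which is why I introduce the index $l'$, and rigorously justifying the interchange of limit and summation via dominated convergence. No deeper obstacle is anticipated, as the $L^2$-expansion \eqref{eq42} does all the heavy lifting.
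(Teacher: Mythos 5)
Your proposal is correct and follows essentially the same route as the paper: Parseval applied to the spectral expansion of $G^k(f)$, then factoring out the dominant eigenvalue $\lambda_l$ and killing the tail. Your version is in fact more careful than the paper's in two places — you handle repeated eigenvalues explicitly via the index $l'$ (the paper tacitly assumes $\lambda_i>\lambda_l$ for all $i>l$), and you prove genuine $L^2$-norm convergence of the normalized iterates rather than only coefficientwise convergence — but the underlying argument is the same.
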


\begin{proof}
Identity (\ref{eq45}) follows from equation (\ref{eq42}) and Passerval's identity.
\vspace{2mm}

 Using \eqref{eq42} we have
\begin{eqnarray}
\frac{\Vert G^k(f) \Vert^2}{\Vert G^{k+1}(f) \Vert^2}&=& \displaystyle\frac{\sum_{i=l}^\infty\displaystyle\frac{(f^i)^2}{\lambda_i^{2k}(\Omega)}}{\sum_{i=l}^\infty\displaystyle\frac{(f^i)^2}{\lambda_i^{2k+2}(\Omega)}}\nonumber \\ 
&&\\ &=&\lambda_l^2(\Omega)\displaystyle\frac{\sum_{i=l}^\infty(f^i)^2\left(\displaystyle\frac{\lambda_l(\Omega)}{\lambda_i(\Omega)}\right)^{2k}}{\sum_{i=l}^\infty(f^i)^2\left(\displaystyle\frac{\lambda_l(\Omega)}{\lambda_i(\Omega)}\right)^{2k+2}}\cdot\nonumber 
\end{eqnarray}
Since $\displaystyle\frac{\lambda_l(\Omega)}{\lambda_i(\Omega)}<1$ for any $i>l$, we have
\begin{equation}
\lim_{k\to\infty}\left(\displaystyle \frac{\lambda_l(\Omega)}{\lambda_i(\Omega)}\right)^{2k+2}=\lim_{k\to\infty}\left(\displaystyle\frac{\lambda_l(\Omega)}{\lambda_i(\Omega)}\right)^{2k}=\delta_{il}.
\end{equation}
Then,
\begin{equation}
\begin{aligned}
\lim_{k\to\infty}\frac{\Vert G^k(f) \Vert^2}{\Vert G^{k+1}(f) \Vert^2}=\lambda_l^2(\Omega).
\end{aligned}
\end{equation}This proves  identity \eqref{eq3.3} of Theorem \ref{Main2}.
By the identities (\ref{eq43}) and (\ref{eq45}) we have that
\begin{equation}
\begin{array}{lll}\displaystyle
\int_{\Omega}\displaystyle\frac{G^k(f)}{\Vert G^{k}(f) \Vert}u_id\mu &=&\displaystyle\frac{f^i}{\lambda_i^k(\Omega)}\frac{1}{\sqrt{\displaystyle\sum_{j=l}^\infty\displaystyle\frac{(f^j)^2}{\lambda_j^{2k}(\Omega)}}}\\
&&\\
&=&\left(\displaystyle\frac{\lambda_l(\Omega)}{\lambda_i(\Omega)}\right)^k\displaystyle\frac{f^i}{\sqrt{(f^l)^2+\displaystyle\sum_{j=l+1}^\infty(f^j)^2\left(\displaystyle\frac{\lambda_i(\Omega)}{\lambda_j(\Omega)}\right)^{2k}}}\cdot
\end{array}
\end{equation}
Since $\lambda_j(\Omega)<\lambda_i(\Omega)$, we have that
\begin{equation}
\lim_{k\to\infty}\int_{\Omega}\frac{G^k(f)}{\Vert G^{k}(f) \Vert}u_id\mu=\int_{\Omega}\lim_{k\to\infty}\frac{G^k(f)}{\Vert G^{k}(f) \Vert}u_id\mu=\delta_{li}.
\end{equation} This shows that  $\phi_l=\lim_{k\to\infty}\frac{G^k(f)}{\Vert G^{k}(f) \Vert}\in {\rm Ker}(\triangle_l+\lambda_l)$ and proves  identity \eqref{eq3.5} of Theorem \ref{Main2}.
\end{proof}

\begin{corollary}\label{cor17}
Under the assumptions of the above proposition and letting $f_1$ be the function 
\begin{equation}
f_1:=f-\lambda_l(\Omega)G(f),
\end{equation}
we have
\begin{equation}
\begin{array}{lll}
\lim_{k\to\infty}\displaystyle\frac{\Vert G^k(f_1)\Vert}{\Vert G^{k+1}(f_1)\Vert}&=&\lambda_n(\Omega)\\
&&\\
\lim_{k\to \infty}\displaystyle\frac{G^k(f_1)}{\Vert G^{k}(f_1)\Vert}&\to& \phi_n\in {\rm Ker}(\triangle_{\mu}+\lambda_n)\,\,{\rm in }\,\, L^2.
\end{array}
\end{equation}
with,
$$
\lambda_n(\Omega)>\lambda_l(\Omega).
$$
\end{corollary}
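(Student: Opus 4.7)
The plan is to reduce Corollary \ref{cor17} to a direct application of Proposition \ref{propl1} after identifying what mode $f_1$ ``starts with''. First I would expand $f$ in the orthonormal eigenbasis, $f = \sum_{i \geq l} f^i u_i$ (the lower coefficients vanish by the defining property of $l$), and use identity \eqref{eq43} to compute explicitly
\[
f_1 = f - \lambda_l(\Omega) G(f) = \sum_{i \geq l} f^i \left(1 - \frac{\lambda_l(\Omega)}{\lambda_i(\Omega)}\right) u_i.
\]
This is a purely algebraic rewrite of Fourier coefficients, requiring no extra analytic input beyond the spectral formula already established.

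Next I would examine the coefficients $f_1^i = f^i\bigl(1 - \lambda_l(\Omega)/\lambda_i(\Omega)\bigr)$. By construction $f_1^i = 0$ for $i < l$, for $i = l$ (the bracket vanishes), and for any $i$ with $\lambda_i(\Omega) = \lambda_l(\Omega)$ (so any higher-indexed eigenfunctions sharing the eigenvalue $\lambda_l$ are also annihilated). Letting $n$ be the first index for which $f_1^n \neq 0$, this forces $\lambda_n(\Omega) > \lambda_l(\Omega)$: such an $n$ satisfies $n > l$ with $f^n \neq 0$ and $\lambda_n(\Omega) \neq \lambda_l(\Omega)$, and because eigenvalues are nondecreasing, the inequality is strict.

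Finally, with this $n$ identified, I would invoke Proposition \ref{propl1} verbatim, but applied to $f_1 \in L^2(\Omega,\mu)$ instead of $f$. That proposition then delivers both
\[
\lim_{k\to\infty}\frac{\Vert G^k(f_1)\Vert}{\Vert G^{k+1}(f_1)\Vert}=\lambda_n(\Omega) \quad\text{and}\quad \frac{G^k(f_1)}{\Vert G^k(f_1)\Vert}\longrightarrow \phi_n\in\ker(\triangle_\mu+\lambda_n)
\]
in $L^2(\Omega,\mu)$, with the strict inequality $\lambda_n(\Omega) > \lambda_l(\Omega)$ carried over automatically. I do not expect a serious obstacle: the whole argument hinges on the fact that subtracting $\lambda_l(\Omega) G(f)$ acts as a spectral projector that kills every Fourier mode in the $\lambda_l$-eigenspace, and everything else is a restatement of Proposition \ref{propl1}. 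The only mild point to flag is the degenerate case where $f$ lies entirely in the $\lambda_l$-eigenspace, in which case $f_1 \equiv 0$ and the conclusion is vacuous; the statement implicitly assumes $f$ has some projection onto a strictly higher eigenspace.
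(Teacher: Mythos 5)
Your proof is correct and takes essentially the same route as the paper: compute the Fourier coefficients of $f_1$ via the spectral formula for $G$, observe that the $\lambda_l$-modes are annihilated, and apply Proposition \ref{propl1} to $f_1$. You are in fact slightly more careful than the paper in two respects — you explicitly note that \emph{all} indices $i>l$ with $\lambda_i(\Omega)=\lambda_l(\Omega)$ are killed (which is what actually yields the strict inequality $\lambda_n(\Omega)>\lambda_l(\Omega)$, whereas the paper merely writes the sum from $i=l+1$), and you flag the degenerate case $f_1\equiv 0$ when $f$ lies entirely in the $\lambda_l$-eigenspace, which the paper silently excludes.
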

\begin{proof}
We only have to apply the above proposition taking into account that by equality (\ref{eq43})
\begin{equation}
\begin{array}{lll}
f-\lambda_l(\Omega)G(f)&=&\sum_{i=l}^\infty f^iu_i-\sum_{i=l}^\infty f^iu_i\displaystyle\frac{\lambda_l(\Omega)}{\lambda_i(\Omega)}\\
&&\\
&=&\sum_{i=l+1}^\infty f^iu_i\left(1-\displaystyle\frac{\lambda_l(\Omega)}{\lambda_i(\Omega)}\right).
\end{array}
\end{equation}
\end{proof}
Observe that since $u_1$ does not change its sign in $B_{\mathbb{M}_h}(r)$, 
$$
\int_{B_{\mathbb{M}_h}(r)} f(x) u_1(x) d\mu(x) \neq 0
$$ positive or negative) function $f$. Hence,  using Proposition \ref{propl1}, we have
\begin{corollary}\label{cor18}Let $\Omega\subset M$ be a bounded open set with smooth boundary $\partial \Omega\neq \emptyset$. Then, for any positive (or negative) $f\in L^2(\Omega, \mu)$,
\begin{equation}
\begin{array}{rll}
\lim_{k\to\infty}\displaystyle\frac{\Vert G^k(f) \Vert}{\Vert G^{k+1}(f) \Vert}&=&\lambda_1(\Omega),\\
&&\\
\lim_{k\to \infty}\displaystyle\frac{G^k(f)}{\Vert G^{k}(f) \Vert}&=& \phi_1 \in {\rm Ker}(\triangle_{\mu}+\lambda_1)\,\,{\rm in }\,\, L^2.
\end{array}
\end{equation}
\end{corollary}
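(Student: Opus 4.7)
The plan is to read the conclusion off from Proposition \ref{propl1} once we pin down the integer $\ell$ appearing there. Recall that $\ell$ is defined as the first index for which the Fourier coefficient $f^\ell = \int_\Omega f\,u_\ell\,d\mu$ is nonzero, so the task reduces to showing that $f^1 \neq 0$ for any strictly signed $f\in L^2(\Omega,\mu)$.

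First I would invoke the classical fact that, since $\Omega$ is bounded with smooth boundary, the first Dirichlet eigenvalue $\lambda_1(\Omega)$ of $\mathcal{L}=-\triangle_\mu|_{W_0^2(\Omega,\mu)}$ is simple and a corresponding eigenfunction $u_1$ can be chosen strictly positive in $\Omega$. This is in essence a Perron--Frobenius / Krein--Rutman statement applied to the positive compact self-adjoint Green operator $G$: the largest eigenvalue $1/\lambda_1$ of $G$ is simple and its eigenfunction has constant sign. Equivalently, one invokes the strong maximum principle applied to $u_1$, which satisfies $\triangle_\mu u_1 + \lambda_1 u_1 = 0$ in $\Omega$ with $u_1|_{\partial\Omega}=0$.

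With $u_1 > 0$ fixed and $f > 0$ almost everywhere (the case $f < 0$ being symmetric, since replacing $f$ by $-f$ multiplies $G^k(f)$ by $(-1)$ and leaves both quotients unchanged), the Fourier coefficient
\[
f^1 \;=\; \int_\Omega f(x)\, u_1(x)\, d\mu(x) \;>\; 0
\]
is strictly positive, hence nonzero. Therefore the integer $\ell$ produced by Proposition \ref{propl1} must equal $1$, and applying that proposition yields
\[
\lim_{k\to\infty}\frac{\Vert G^k(f)\Vert}{\Vert G^{k+1}(f)\Vert} \;=\; \lambda_1(\Omega),
\qquad
\lim_{k\to\infty}\frac{G^k(f)}{\Vert G^k(f)\Vert} \;=\; \phi_1 \;\in\; \mathrm{Ker}(\triangle_\mu + \lambda_1),
\]
both limits taken in $L^2(\Omega,\mu)$, which is precisely the content of the corollary.

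There is no serious obstacle here, as the statement is simply a packaging of Proposition \ref{propl1} together with the non-vanishing of $f^1$. The only point requiring care is the assertion that $u_1$ has constant sign on $\Omega$; but this is a standard fact for the first Dirichlet eigenfunction on a bounded domain with smooth boundary, available for the weighted Laplacian $\triangle_\mu$ via the strong maximum principle or via the compactness and positivity of $G$.
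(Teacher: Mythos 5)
Your proposal is correct and follows essentially the same route as the paper: the paper likewise observes that the first eigenfunction $u_1$ does not change sign, so that $\int_\Omega f\,u_1\,d\mu\neq 0$ for any strictly signed $f$, forcing $\ell=1$ in Proposition \ref{propl1}. You simply spell out the standard justification (Krein--Rutman / strong maximum principle) for the constant sign of $u_1$, which the paper takes for granted.
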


\subsubsection{Proof of Theorem \ref{thm4.2} }

Let $\Omega$ be an open  relatively compact domain with smooth boundary $\partial \Omega\neq \emptyset $ of a Riemannian manifold $M$. Consider the following hierarchy  Dirichlet problem 
\begin{equation}\label{eq58}
\left\{\begin{array}{rll}
\phi_0&=&1\,\, \text{ on }\,\,\Omega\\
\triangle \phi_k+k\phi_{k-1}&=&0\,\,\text{ on }\,\,\Omega\\
{\phi_k}_{\vert \partial\Omega}&=&0.
\end{array}\right.
\end{equation}Theorem \ref{thm4.2} states that this problem 
admits a unique family of solutions $\{\phi_k\}_{k=1}^\infty$, given by
\begin{equation}\label{eq59}
\phi_k(x)=k!\, G^k(1)(x).
\end{equation}
In order to show the uniqueness, note that first of all that $\phi_1$ is unique. Otherwise we would have  two different functions $\phi_1^1$ and $\phi_1^2$ such that
\begin{equation}
\begin{aligned}
&\triangle \phi_1^1=\triangle \phi_1^2=-1\\
&\phi_1^1\vert_{\partial \Omega}=\phi_1^2\vert_{\partial \Omega}=0.
\end{aligned}
\end{equation}
Hence $\phi_1^1-\phi_1^2$ would  be an harmonic function in $\Omega$ with $\phi_1^1-\phi_1^2=0$ in the boundary $\partial \Omega$, then by the minimum principle $\phi_1^1=\phi_1^2$,  a contradiction. Assume that in the family of solutions $\{ \phi_{k}\}_{k=1}^{\infty}$ the first $j$ functions $\phi_{1},\phi_{2},\ldots, \phi_{j}$ are unique but there  exists two different  $\phi_{j+1}^1$ and $\phi_{j+1}^2$ solutions in the $j+1$-th slot. Then $\triangle (\phi_{j+1}^1-\phi_{j+1}^2)=0$ and $\phi_{j+1}^1-\phi_{j+1}^2=0$ on $\partial \Omega$. Then $\phi_{j+1}^1=\phi_{j+1}^2$ 
Now, functions defined by
$\phi_k(x)=k\, G(\phi_{k-1})(x)$, $k=1,\ldots$
are the solutions of the problem \eqref{eq58}.
Since the solution to the problem (\ref{eq58}) is unique we only have to check that 
\begin{equation}
\triangle \left(k\, G(\phi_{k-1})\right)=-k\phi_{k-1}.
\end{equation}
But that is straightforward because  the Green operator is the inverse of $-\triangle$. To prove  (\ref{eq59}) let us use the induction method. Observe that $\phi_1=G(1)(x)$. Suppose that equation (\ref{eq59}) is true and let us compute $\phi_{k+1}(x)$.
\begin{equation}
\begin{aligned}
\phi_{k+1}(x)=&(k+1)G(\phi_k)(x)=(k+1)G(k!\,G^k(1))(x)\\
=&(k+1)k!\,G(G^k(1))(x)=(k+1)!\,G^{k+1}(1)(x).
\end{aligned}
\end{equation}
This proves \eqref{eqphi_k} in Theorem \ref{thm4.2}. On the other hand, by \eqref{eq42},  \[G^{k}(1)(x)=\sum_{i=1}^{\infty}\frac{a_i u_i(x)}{\lambda_{i}^{k}(\Omega)},\] where $a_i=\int_{\Omega}u_i(y)d\mu(y)$. Thus \[\phi_{k}(x)=k!G^{k}(1)=k!\sum_{i=1}^{\infty}\frac{a_i u_i(x)}{\lambda_{i}^{k}(\Omega)}.\]Moreover, the $L^{1}(\Omega, \mu)$-momentum spectrum of $\Omega$ is readily obtained by \[\mathcal{A}_{k}(\Omega)=\int_{\Omega}\phi_{k}d\mu=k!\sum_{i=1}^{\infty}\frac{ a_{i}^{2} }{\lambda_{i}^{k}(\Omega)}.\]
This proves Theorem \ref{thm4.2}. The proof of Corollary \ref{cor4.1} is straightforward.

\end{document}